\documentclass[11pt]{article}
\usepackage{amsfonts,amsmath,amssymb,amssymb,tikz,color, mathtools,amsthm,cite}
\usepackage{subcaption}
\usepackage{float}
\usetikzlibrary{arrows}
\usetikzlibrary{decorations.pathreplacing}

\newtheorem{theorem}{\bf Theorem}[section]
\newtheorem{corollary}[theorem]{\bf Corollary}
\newtheorem{lemma}[theorem]{\bf Lemma}
\newtheorem{proposition}[theorem]{\bf Proposition}

\newtheorem{problem}[theorem]{\bf Problem}

\theoremstyle{definition}
\newtheorem{remark}[theorem]{\bf Remark}

\newcommand{\Tr}{{\rm Tr}}

\usepackage[hyphens]{url}

\usepackage{authblk}

\title{{\bf Edge transmission irregular graphs}\footnote{Email addresses:\ \texttt{kexxu1221@126.com} (K.\ Xu), \texttt{ivan.damnjanovic@elfak.ni.ac.rs} (I.\ Damnjanovi\'{c}), \texttt{milivojevicu@pm.me} (U.\ Milivojevi\'{c}), \texttt{sandi.klavzar@fmf.uni-lj.si} (S.\ Klav\v{z}ar).}}

\author[1,2]{Kexiang Xu}
\author[3,4]{Ivan Damnjanovi\'{c}\thanks{Corresponding author.}}
\author[3]{Uro\v{s} Milivojevi\'{c}}
\author[5,6,7]{Sandi Klav\v{z}ar}

\affil[1]{School of Mathematics, Nanjing University of  Aeronautics and Astronautics,\linebreak Nanjing, Jiangsu, 210016, PR China}
\affil[2]{MIIT Key Laboratory of Mathematical Modelling and High Performance \linebreak Computing of Air Vehicles, Nanjing, Jiangsu, 210016, PR China}
\affil[3]{Faculty of Electronic Engineering, University of Ni\v{s}, Aleksandra Medvedeva 4, Ni\v{s}, 18104, Serbia}
\affil[4]{Faculty of Mathematics, Natural Sciences and Information Technologies, University of Primorska,\linebreak Glagolja\v{s}ka 8, Koper, 6000, Slovenia}
\affil[5]{Faculty of Mathematics and Physics, University of Ljubljana,\linebreak Jadranska ulica 19, Ljubljana, 1000, Slovenia}
\affil[6]{Institute of Mathematics, Physics and Mechanics, Jadranska ulica 19, Ljubljana, 1000, Slovenia}
\affil[7]{Faculty of Natural Sciences and Mathematics, University of Maribor,\linebreak Slom\v{s}kov trg 15, Maribor, 2000, Slovenia}

\date{}

\begin{document}

\maketitle

\begin{abstract}
The transmission of a vertex $v$ in a connected graph $G$ is the sum of distances from $v$ to all vertices in $G$. A transmission irregular (TI) graph is a connected graph in which any two distinct vertices have different transmissions. We extend the concept of transmission to edges by defining the transmission of an edge as the sum of the transmissions of its two endpoints. A connected graph can now be called edge transmission irregular (ETI) if any two distinct edges have different transmissions. We show that almost all graphs are not ETI and then investigate several related order realizability problems involving chemical ETI graphs. In particular, we prove that for every $n \ge 15$, there exists a subcubic tree of order $n$ that is both TI and ETI.
\end{abstract}

\medskip\noindent
\textbf{Keywords}: graph distance; transmission; transmission irregular graph; edge transmission; edge transmission irregular graph; chemical graph

\medskip\noindent
\textbf{AMS Subj.\ Class.}: 05C05, 05C12, 05C92

\section{Introduction}
\label{sec:intro}

In chemical graph theory, molecules are naturally represented by chemical graphs. The graph distance function is a fundamental tool for exploring these graphs, which in turn reflect the physicochemical properties of the corresponding (organic) compounds; see~\cite{RoKi02}. As the oldest and most well-known distance-based invariant (also known as a topological index in chemical graph theory), the Wiener index~\cite{wiener-1947} of a graph $G$ is the sum of distances between all unordered pairs of vertices in $G$. Its edge version, the edge Wiener index, was later introduced in~\cite{KhYoAsWa09}. For a survey on graphs that are extremal with respect to distance-based topological indices, see~\cite{XLDGF2014}, and for a selection of recent developments with a focus on applications, see~\cite{brezo-2021, DXX26, das-2021, sorgun-2022}.

Let $G = (V(G), E(G))$ be a connected graph with vertex set $V(G)$ and edge set $E(G)$, where $n(G) \coloneqq |V(G)|$ and $m(G) \coloneqq |E(G)|$. The \textit{degree} of a vertex $v \in V(G)$, denoted by $d_G(v)$, is the number of edges incident to $v$ in $G$. For any two vertices $u, v \in V(G)$, we denote by $d_G(u, v)$ the shortest-path distance between $u$ and $v$ in $G$. A basic building block in the exploration of metric properties of a graph is the \textit{transmission} of a vertex $v \in V(G)$, which is defined as the sum of distances from $v$ to all vertices in $G$ and denoted by $\Tr_G(v)$, i.e.,
\[
    \Tr_G(v) \coloneqq \sum_{u \in V(G) } d_G(v, u) \, .
\]
The fundamental nature of this concept is highlighted by its presence in the literature under several alternative names, such as the status~\cite{abiad-2021, QZ2020} or the total distance~\cite{cava-2019, kla-2013} of a vertex. The \textit{transmission set} of $G$ is $\Tr(G) \coloneqq \{ \Tr_G(v):\ v \in V(G)\}$. If $|\Tr(G)| = n(G)$, then $G$ is called \textit{transmission irregular} (TI); see \cite{AAKS2014,xu-2021}.

In this paper, we extend this paradigm by introducing the concepts of edge transmission and edge transmission irregularity. For an edge $e = uv \in E(G)$, we define its \textit{transmission}, $\Tr_G(e)$, as the sum of the transmissions of its endpoints, i.e., $\Tr_G(e) \coloneqq \Tr_G(u) + \Tr_G(v)$. Similarly, we define the \textit{edge transmission set} of $G$ as $\mathrm{ETr}(G) \coloneqq \{\Tr_G(e):\ e \in E(G)\}$. The cardinality $|\mathrm{ETr}(G)|$ is the number of distinct edge transmissions in $G$, which can be viewed as the \textit{edge Wiener dimension} of $G$. This serves as an edge analogue of the Wiener dimension, which is defined as $|\Tr(G)|$. We define a graph $G$ to be \textit{edge transmission irregular} (ETI) if it attains the maximum possible edge Wiener dimension, i.e., if $|\mathrm{ETr}(G)| = m(G)$. Note that the sum of all edge transmissions in $G$ is equal to the well-known degree distance DD of $G$ (see~\cite{dobrynin1994, harshitha-2025}), that is:

$$\mathrm{DD}(G) = \sum_{e\in E(G)} \Tr_G(e) = \sum_{v \in V(G)} d_G(v) \, \Tr_G(v).$$

Since almost all graphs are not TI~\cite{AK2018}, the search for TI graphs has attracted the interest of several groups of researchers. A variety of structural and constructive results regarding these graphs can be found in~\cite{al-yakoob-2020, al-yakoob-2022a, al-yakoob-2022b, bezh-2021, Damnjanovic2024, DamSteAlYa2024, dobrynin-2019c, dobrynin-2019b, AXu, XK21, Xu2023}. Here, we focus on the search for ETI graphs, with an emphasis on chemical graphs.

The necessary definitions and known results are outlined 
in Section~\ref{sec:prelim} for the use in subsequent proofs. We then establish the rareness of ETI graphs in 
Section~\ref{sec:rare} and construct several infinite 
families of chemical ETI graphs in Section~\ref{sec:chemical}. Finally, Section~\ref{sec:main} completely solves the order 
realizability problem for subcubic trees that are both TI 
and ETI through the following theorem.

\begin{theorem}\label{th:main}
If $n \ge 2$, then there exists a subcubic tree of order $n$ that is both TI and ETI if and only if $n \in \{ 9, 11, 13 \}$ or $n \ge 15$.
\end{theorem}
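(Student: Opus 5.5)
The statement has two directions, and I would treat them separately: a finite non-existence part for small orders, and an explicit construction for the remaining orders.

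\textbf{Non-existence for small orders.} Here $n \in \{2,\dots,8,10,12,14\}$. For a tree $T$ and an edge $e=uv$, let $n_u$ and $n_v$ denote the numbers of vertices closer to $u$ and to $v$, respectively. Then
\[
\Tr_T(u)-\Tr_T(v)=n_v-n_u,
\]
so adjacent vertices have transmissions of opposite parity exactly when $n$ is odd. This parity observation constrains the admissible edge transmissions. It also suggests why some even orders fail: for even $n$ all transmissions have the same parity, and collisions become more likely.

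I do not expect a clean uniform argument here. Instead I would rely on an exhaustive check of all subcubic trees of these orders. There are only a few hundred such trees with $n\le 14$, so this is a routine computation. The paper would presumably report it as a computer search, possibly supported by a short hand argument for the tiny cases $n\le 5$. For example, the path $P_n$ is never TI because of its symmetry, and this already settles several small orders.

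\textbf{Sporadic existence cases.} For $n\in\{9,11,13\}$ I would exhibit explicit subcubic trees, found by the same search, and verify their vertex and edge transmissions directly in a table.

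\textbf{Construction for $n\ge 15$.} This is the main work. I would use a caterpillar-like family: a long path $P_k$ with a small number of pendant vertices, or short pendant paths, attached at carefully chosen positions, with parameters depending on the residue of $n$ modulo some small number such as $2$ or $4$.

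For a path-like tree, the transmission along the spine is a convex, roughly quadratic function of position. Distinctness of transmissions therefore fails essentially only through the reflection symmetry about the centre. Attaching pendants asymmetrically, for instance one pendant near one end and a pendant path of length $2$ elsewhere, breaks this symmetry. The transmissions on the two sides of the minimum then differ by a controllable shift, for example an odd amount, so they cannot coincide.

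Edge transmissions along the spine are sums of consecutive spine values, so the same analysis applies to them. On each monotone side of the minimum they are strictly monotone. The comparisons that remain are:
\begin{itemize}
\item spine values on one side against spine values on the other side;
\item pendant vertices and pendant edges against spine values and spine edges.
\end{itemize}
I would write the transmissions explicitly as quadratic polynomials in the position and in $n$. Any potential equality then becomes a Diophantine condition, which I would rule out by a parity or modular argument or by size bounds.

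\textbf{Expected obstacle.} The main difficulty is simultaneously controlling the vertex collisions and the edge collisions across the two branches of the spine. A single family may fail for some residues or for a few small $n$. So I would expect to need two or three families, indexed by $n \bmod 2$ or $n \bmod 4$, together with direct verification of a few initial values of $n$ near $15$ that the general argument does not reach.
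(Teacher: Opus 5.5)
Your non-existence part (exhaustive search over subcubic trees of orders $n\le 14$, $n\notin\{9,11,13\}$) and the explicit trees for $n\in\{9,11,13\}$ match what the paper does by computer. The spine-versus-spine comparisons are also fine by size bounds; in the paper they reduce to a difference of two squares equal to $2p+1$ or $8p+4$, which is smaller than the gap between consecutive squares.

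The gap is your second bullet: pendant vertices and edges against the spine cannot be handled by parity, a small modulus, or size bounds. In a TI tree the vertex of minimum transmission $\beta$ must have degree at least $3$, since a degree-$2$ minimum either ties with a neighbour or its two neighbours tie. So some pendant hangs at the centroid, and its first vertex has transmission $\beta+n-O(1)$. By Lemma~\ref{pend}, the spine values grow roughly like $\beta+t^2+O(t)$ (when the pendants sit near the centroid, as in your sketch), so they pass this value near $t\approx\sqrt n$. A collision then amounts to ``$n+c$ (or $2n+c$, $4n+c$ for edges) is a perfect square'' for finitely many constants $c$ fixed by the family. That bad set is infinite and not periodic.

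The simplest family of your type already shows this. For odd $n$, $\mathrm{SW}_{n,0}=S(1,\frac{n-3}{2},\frac{n-1}{2})$ has arm values $\beta+j^2$ and $\beta+j(j+2)$ and leaf value $\beta+n-2$. Hence it is TI exactly when neither $n-2$ nor $n-1$ is a perfect square. It fails at $n=11,17,27,37,51,65,\ldots$, infinitely often in both odd classes modulo $4$. So case splits by $n\bmod 2$ or $n\bmod 4$ cannot dispose of these collisions, and the expectation that only a few values near $15$ remain is far off.

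What is missing is a device that copes with such near-square failure sets for every $n$. The paper uses saw graphs $\mathrm{SW}_{n,p}$ for several values of $p$. Their TI and ETI conditions (Propositions~\ref{odd_saw_ti}--\ref{even_saw_eti}, Corollaries~\ref{odd_saw_0}--\ref{even_saw_8}) say that no perfect square of a prescribed parity lies in a short window near $n$, $2n$, $4n$, $6n$ or $8n$. The windows for different $p$ are disjoint. So if every family failed, pigeonhole would force two squares of the same parity into one bounded window, which is impossible once consecutive such squares are far enough apart.

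This settles odd $n\ge 317$ using $p\in\{0,1,2\}$ and even $n\ge 47440$ using $p\in\{0,1,3,5,8\}$. The remaining range is verified by computer, and dozens of orders up to $466$ still need ad hoc caterpillars from Table~\ref{singular_constructions}. Without an argument of this kind, your existence part for $n\ge 15$ is a plan rather than a proof.
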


A part of the proof of Theorem~\ref{th:main} is carried out computationally using the \texttt{SageMath}~\cite{SageMath} scripts available in \cite{GitHub}. Section~\ref{sec:conclusion} concludes the paper by providing some consequences of Theorem~\ref{th:main} regarding the TI and ETI properties of graphs and presenting a possible direction for future research.

\section{Preliminaries}
\label{sec:prelim}

All graphs considered in this paper are undirected, simple, finite and connected. For $X\subseteq V(G)$, let $G-X$ denote the subgraph of $G$ obtained by deleting the vertices of $X$ and their incident edges; we write $G - v$ instead of $G - \{v\}$, for $v \in V(G)$. Similarly, for $F \subseteq E(G)$, the graph $G-F$ is the spanning subgraph of $G$ obtained by deleting the edges of $F$, and we write $G-e$ instead of $G-\{e\}$, for $e \in E(G)$. For an edge $uv \in E(G)$, we denote by $n_G(u, v)$ the number of vertices in $G$ closer to $u$ than to $v$. The quantity $n_G(v, u)$ is defined analogously.

For any $k \in \mathbb{N}$, we denote by $[k]$ the set $\{1, \ldots, k\}$ and by $[k]_0$ the set $[k]\cup \{0\}$. We denote by $\mathcal{PS}$ the set of all perfect squares. For a set $A \subseteq \mathbb{R}$ and a number $t \in \mathbb{R}$, the coset $A + t$ is defined as $\{ a + t:\ a \in A \}$. Similarly, for sets $A, B \subseteq \mathbb{R}$, we denote by $A + B$ the set $\{ a + b:\ a \in A, b \in B \}$. A set of integers is odd (resp.\ even) if it consists of odd (resp.\ even) integers.

A vertex $v$ with $d_G(v) = 1$ is called a \textit{pendent vertex} (also a {\em leaf} if $G$ is a tree), and the edge incident with a pendent vertex is a \textit{pendent edge}. A vertex in a graph $G$ is \textit{universal} if it is adjacent to all other vertices in $G$. Now, let $P \coloneqq v_k v_{k - 1} \cdots v_1$, $k\ge 2$, with the natural adjacency relation be a path in graph $G$. If $d_G(v_k) \ge 3$, $d_G(v_1) = 1$, and $d_G(v_j) = 2$ for all $j \in \{ 2, 3, \ldots, k - 1 \}$, then $P$ is a \textit{proper pendent path} with root $v_k$. If $d_G(v_k), d_G(v_1) \ge 3$, with $\deg_G(v_j) = 2$ for all $j \in \{ 2, 3, \ldots, k - 1 \}$, then $P$ is an \textit{internal path} with terminals $v_k$ and $v_1$.

The exact definition of a chemical graph varies in the literature. Depending on the use case, some authors define a \emph{chemical graph} as a connected graph with a maximum degree of at most four, while others restrict the maximum degree to at most three. In this paper, we adopt the more general convention and allow the maximum degree to be at most four. Nonetheless, to conform to both standards, we investigate both subcubic and subquartic connected graphs, placing a particular emphasis on trees.

A vertex of degree at least three is a called a \textit{branching vertex}. A tree with a unique branching vertex is \textit{starlike}. We denote a starlike tree $T$ with branching vertex $v$ by $S(n_1, n_2, \ldots, n_k)$ if $T - v$ consists of $k$ disjoint paths of orders $n_1, n_2, \ldots, n_k$, respectively. The pendent path of length $n_j$ from $v$ is called an {\em $n_j$-arm} of $T$.

For any $n \ge 3$ and $k \ge 2$, let $C_n(a_1, a_2, \ldots, a_k)$ be the \textit{ordinary caterpillar tree} of order $n + k$ that arises from the path $P_n \coloneqq v_1 v_2 \cdots v_n$ by attaching a leaf to each of the vertices $v_{a_1}, v_{a_2}, \ldots, v_{a_k}$, with $2 \le a_j \le n - 1$ for each $j \in [k]$. Also, for any $n \ge 3$ and $k \ge 2$, let $C_n(a_1, b_1; a_2, b_2; \ldots; a_k, b_k)$ be the \textit{variant caterpillar tree} of order $n + \sum_{j = 1}^k b_j$ that arises from the path $P_n \coloneqq v_1 v_2 \cdots v_n$ by attaching a pendent path of length $b_j$ to the vertex $v_{a_j}$, with $2 \le a_j \le n - 1$ and $b_j \ge 1$ for each $j \in [k]$.

\begin{lemma}[\hspace{1sp}{\cite{Bala}}]\label{equal}
For any graph $G$ and edge $uv \in E(G)$, we have $\Tr_G(u) -\Tr_G(v) = n_G(v, u) - n_G(u, v)$.
\end{lemma}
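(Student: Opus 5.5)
The plan is to compare the two transmissions vertex by vertex, grouping the vertices of $G$ by where they lie relative to the edge $uv$. First, partition $V(G)$ into three classes:
\[
A \coloneqq \{ w :\ d_G(w,u) < d_G(w,v) \}, \quad B \coloneqq \{ w :\ d_G(w,v) < d_G(w,u) \}, \quad C \coloneqq \{ w :\ d_G(w,u) = d_G(w,v) \}.
\]
By definition, $|A| = n_G(u,v)$ and $|B| = n_G(v,u)$. Then write
\[
\Tr_G(u) - \Tr_G(v) = \sum_{w \in V(G)} \bigl( d_G(u,w) - d_G(v,w) \bigr)
\]
and split this sum over the three classes.

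The key observation is that $u$ and $v$ are adjacent, so the triangle inequality gives $|d_G(u,w) - d_G(v,w)| \le 1$ for every vertex $w$. Consequently each summand equals $-1$ for $w \in A$, equals $+1$ for $w \in B$, and equals $0$ for $w \in C$. Summing over the classes yields $|B| - |A| = n_G(v,u) - n_G(u,v)$, which is exactly the claimed identity.

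The argument has no genuine obstacle. The only point needing care is the bound $|d_G(u,w) - d_G(v,w)| \le 1$. This follows from $d_G(u,w) \le d_G(u,v) + d_G(v,w) = 1 + d_G(v,w)$ together with the symmetric inequality obtained by swapping $u$ and $v$. The same bound also shows that the three classes exhaust $V(G)$, and it covers the endpoints themselves: $u \in A$ and $v \in B$. Connectivity of $G$ guarantees that all distances are finite, so every summand is well defined.
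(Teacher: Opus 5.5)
Your proof is correct. The paper gives no proof of this lemma and simply cites it from \cite{Bala}; your argument (splitting $V(G)$ into the vertices closer to $u$, closer to $v$, and equidistant, and noting that their summands are $-1$, $+1$ and $0$) is the standard proof of this fact.

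One small imprecision: that $A$, $B$, $C$ partition $V(G)$ follows from trichotomy of the integers $d_G(w,u)$ and $d_G(w,v)$, not from the bound $|d_G(u,w)-d_G(v,w)|\le 1$. The bound is needed only to force the summands over $A$ and $B$ to be exactly $-1$ and $+1$.
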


\begin{lemma}[\hspace{1sp}{\cite{XK21}}]\label{pend}
If $G$ is a graph of order $n$ and $v_0 v_1 \cdots v_k$ a pendent path in $G$ with root $v_0$ and leaf $v_k$, then $\Tr_G(v_j) - \Tr_G(v_0) = j(j + n - 2k - 1)$ for all $j \in [k]_0$.
\end{lemma}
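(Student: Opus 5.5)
We prove the formula by induction on $j$, walking along the pendent path from the root and using Lemma~\ref{equal} on each edge $v_{i-1}v_i$. The case $j = 0$ is trivial, since both sides vanish. The whole argument rests on counting, for each edge of the pendent path, the vertices on either side of it.

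Fix $i \in [k]$ and consider the edge $v_{i-1}v_i$. Since $v_1, \ldots, v_{k-1}$ have degree two and $v_k$ is a leaf, removing this edge separates $G$ into two parts.
\begin{itemize}
\item One part is the tail $\{v_i, v_{i+1}, \ldots, v_k\}$, which has $k - i + 1$ vertices.
\item The other part is everything else, which has $n - (k - i + 1)$ vertices.
\end{itemize}
Every vertex of the tail is strictly closer to $v_i$ than to $v_{i-1}$. Every other vertex reaches $v_i$ only through $v_{i-1}$, so it is strictly closer to $v_{i-1}$. This holds because the edge $v_{i-1}v_i$ is a bridge, so there are no ties. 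Hence
\[
n_G(v_i, v_{i-1}) = k - i + 1, \qquad n_G(v_{i-1}, v_i) = n - k + i - 1 .
\]
Applying Lemma~\ref{equal} to the edge $v_i v_{i-1}$ then gives
\[
\Tr_G(v_i) - \Tr_G(v_{i-1}) = n_G(v_{i-1}, v_i) - n_G(v_i, v_{i-1}) = n - 2k + 2i - 2 .
\]

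Telescoping over $i = 1, \ldots, j$, we obtain
\[
\Tr_G(v_j) - \Tr_G(v_0) = \sum_{i=1}^{j} (n - 2k - 2 + 2i) = j(n - 2k - 2) + j(j+1) = j(j + n - 2k - 1),
\]
which is the claimed identity. The only point needing care is the bridge argument: it shows that the two counts are exactly the sizes of the components of $G - v_{i-1}v_i$. This is immediate from the degree conditions on the pendent path, so there is no real obstacle here.
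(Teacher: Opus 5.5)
Your proof is correct. The bridge argument gives $n_G(v_i,v_{i-1}) = k-i+1$ and $n_G(v_{i-1},v_i) = n-k+i-1$; Lemma~\ref{equal} then yields the step $n-2k+2i-2$, and telescoping gives exactly $j(j+n-2k-1)$. The paper gives no proof of this lemma (it is quoted from earlier work), but your derivation via Lemma~\ref{equal} is the standard one and matches how the paper combines the two lemmas in its later transmission computations.
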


\section{Rareness of ETI graphs}
\label{sec:rare}

In this section we show the rareness of ETI graphs. Before doing so, we require the following concept. For an edge $uv \in E(G)$, the \textit{degree weight} $\mathrm{dw}_G(uv)$ is the sum of the degrees of its endpoints, i.e., $\mathrm{dw}_G(uv) \coloneqq d_G(u) + d_G(v)$. Next, we prove a property of the degree weights of edges that is analogous to the well-known property of vertex degrees.

\begin{lemma}\label{dw} 
Let $G$ be a graph with at least two edges. Then $G$ has two distinct edges of the same degree weight.
\end{lemma}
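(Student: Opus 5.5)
The plan is to reduce the statement to the classical fact that every graph on at least two vertices has two vertices of equal degree, by passing to the line graph. Let $L(G)$ be the line graph of $G$: its vertices are the edges of $G$, and two of them are adjacent exactly when the corresponding edges of $G$ share an endpoint. Since $G$ has at least two edges, $L(G)$ has at least two vertices.

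The key step is to compute the degree of a vertex $uv$ in $L(G)$. Because $G$ is simple, the edges of $G$ sharing an endpoint with $uv$ are the $d_G(u)-1$ other edges at $u$ and the $d_G(v)-1$ other edges at $v$. No edge is counted twice, since an edge meeting both $u$ and $v$ would be a second copy of $uv$. Hence
\[
d_{L(G)}(uv) = d_G(u) + d_G(v) - 2 = \mathrm{dw}_G(uv) - 2 .
\]
So two distinct edges of $G$ have the same degree weight if and only if the corresponding vertices of $L(G)$ have the same degree.

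It therefore suffices to recall the pigeonhole argument for a graph $H$ on $k \ge 2$ vertices. All degrees lie in $\{0, 1, \ldots, k-1\}$. The values $0$ and $k-1$ cannot both occur: a vertex of degree $k-1$ is adjacent to every other vertex, so no vertex is isolated. Thus the $k$ degrees take at most $k-1$ distinct values, and two vertices share a degree. Applying this to $H = L(G)$ gives two distinct edges $e, f \in E(G)$ with $\mathrm{dw}_G(e) = \mathrm{dw}_G(f)$.

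There is no real obstacle here. The only point needing care is the degree identity in $L(G)$, which uses simplicity of $G$ to rule out double counting. The argument does not use connectivity, so it also covers a disconnected $G$; note that $L(G)$ itself may be disconnected, and the pigeonhole argument does not need it to be connected.
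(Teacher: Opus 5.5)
Your proof is correct and follows the paper's argument: pass to the line graph $L(G)$, note that the vertex $uv$ has degree $d_G(u)+d_G(v)-2$ there, and apply the classical fact that a graph on at least two vertices has two vertices of equal degree. You also spell out the simplicity check that rules out double counting and the pigeonhole step itself, both of which the paper leaves implicit.
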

\begin{proof}
Let $L(G)$ be the line graph of $G$. For any edge $uv \in E(G)$, the corresponding vertex in $L(G)$ has 
degree $d_G(u) + d_G(v) - 2$. Since $L(G)$ has at least two vertices, it has two distinct vertices of the same degree.
Therefore, $G$ has two distinct edges of the same degree weight.
\end{proof}

Below, we establish the rareness of ETI graphs using a similar idea as in \cite{AK2018}, mirroring the known behavior of TI graphs.

\begin{theorem}\label{rare-eti}
Almost all graphs are not ETI.
\end{theorem}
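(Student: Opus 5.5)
The idea mirrors \cite{AK2018}: almost all graphs have diameter~$2$, and in a graph of diameter at most~$2$ the transmission of a vertex is determined by its degree. Lemma~\ref{dw} then forces two edges with equal transmission.

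We work in the random graph model $G(n,1/2)$, where each labelled graph on $n$ vertices is equally likely. First, we recall the classical fact that almost all graphs have diameter exactly~$2$. For a fixed pair $u,v$, the probability that no third vertex is adjacent to both is $(3/4)^{n-2}$. A union bound over the $\binom{n}{2}$ pairs gives $\binom{n}{2}(3/4)^{n-2}\to 0$. Hence with probability tending to $1$, every pair of vertices is at distance at most~$2$, so $G$ is connected with diameter at most~$2$. Since the probability of being complete is negligible, the diameter is exactly~$2$. Either way, diameter at most~$2$ suffices.

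Second, suppose $G$ is connected of order $n$ with $\operatorname{diam}(G)\le 2$. Then each $v$ is at distance $1$ from its $d_G(v)$ neighbors and at distance $2$ from the remaining $n-1-d_G(v)$ vertices. Therefore
\[
\Tr_G(v)=d_G(v)+2\,(n-1-d_G(v))=2(n-1)-d_G(v).
\]
Consequently, for every edge $uv$ we get
\[
\Tr_G(uv)=4(n-1)-\mathrm{dw}_G(uv).
\]
So edge transmissions are distinct exactly when degree weights are distinct. Once $G$ has at least two edges, which holds with probability tending to~$1$, Lemma~\ref{dw} yields two distinct edges with the same degree weight, and hence the same transmission. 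Thus $G$ is not ETI.

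Combining the two steps, the proportion of labelled graphs on $n$ vertices that are ETI tends to $0$ as $n\to\infty$. No step is a genuine obstacle. The only point needing care is the meaning of ``almost all''. ETI is defined only for connected graphs, so disconnected graphs are trivially not ETI. The argument also works if one restricts to connected graphs, since almost all graphs are connected. If the unlabelled count is intended instead, one would appeal to the standard fact that almost all graphs are asymmetric to transfer the result, but I would state it in the labelled setting.
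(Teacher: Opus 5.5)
Your proof is correct and follows the paper's argument exactly. Almost all graphs have diameter two, and there $\Tr_G(v)=2n-2-d_G(v)$, so $\Tr_G(uv)=4n-4-\mathrm{dw}_G(uv)$ and Lemma~\ref{dw} gives two edges with equal transmission. You also spell out the union-bound proof of the diameter-two fact, which the paper just cites as a standard probabilistic argument.
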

\begin{proof}
By a standard probabilistic argument, almost all graphs have diameter two. Thus, it suffices to prove that no graph $G$ with diameter two is ETI. For any vertex $v \in V(G)$, its transmission can be expressed as $\Tr_G(v) = d_G(v) + 2(n(G) - 1 - d_G(v)) = 2n(G) - 2 - d_G(v)$. It follows that the transmission of any edge $uv \in E(G)$ is given by $\Tr_G(uv) = 4n(G) - 4 - (d_G(u) + d_G(v))$. By Lemma~\ref{dw}, the graph $G$ has two distinct edges with the same transmission.
\end{proof}

In light of the rareness of ETI graphs demonstrated in Theorem~\ref{rare-eti}, it is of interest to characterize ETI graphs within specific classes of graphs. Since both TI and ETI graphs are rare, the class of graphs that are simultaneously TI and ETI is sparse. Furthermore, these two structural properties do not imply one another.

Excluding the trivial case $K_1$, an exhaustive search using the \texttt{geng} utility from the \texttt{nauty} package \cite{Nauty} confirms that the minimum order for a TI graph is $7$. There are exactly three such graphs of order $7$, as shown in Fig.~\ref{fig:smallest_ti}. The graph from Fig.~\ref{fig:smallest_ti_1} is $S(1, 2, 3)$ as the only tree among them and hence the unique smallest TI tree. The graph from Fig.~\ref{fig:smallest_ti_2} is the only one among them whose vertex transmissions form a sequence of consecutive integers as the unique smallest interval transmission irregular (ITI) graph \cite{al-yakoob-2022a}. The graph from Fig.~\ref{fig:smallest_ti_3} is the unique smallest unicyclic TI graph. However, none of these three graphs is ETI.

If we exclude $K_1$ and $K_2$ as trivial ETI graphs, the smallest ETI graphs appear at order~$8$. Among the five such graphs of order $8$, exactly one is simultaneously TI and ETI; see Fig.~\ref{fig:smallest_ti_eti}. The remaining four are ETI, but not TI, as shown in Fig.~\ref{fig:smallest_eti_not_ti}. The graph from Fig.~\ref{fig:smallest_eti_not_ti_1} is $S(1, 2, 4)$ as the only tree among all five graphs, and hence the unique smallest ETI tree. Out of the last three graphs, two are unicyclic, whereas the third is neither a tree nor unicyclic.

\begin{figure}[H]
\centering

\begin{subfigure}[b]{0.45\textwidth}
\centering
\begin{tikzpicture}[scale=1.0]
\tikzstyle{vertex}=[draw,circle,minimum size=4pt,inner sep=1pt,fill=black]
\tikzstyle{edge}=[draw,thick]
\tikzstyle{dedge}=[draw,thick,dashed]

\node[vertex] (0) at (0, 0) {};
\node[vertex] (1) at (1, 0) {};
\node[vertex] (2) at (2, 0) {};
\node[vertex] (3) at (3, 0) {};
\node[vertex] (4) at (4, 0) {};
\node[vertex] (5) at (5, 0) {};
\node[vertex] (6) at (2, 1) {};

\path[edge] (0) -- (1);
\path[edge] (1) -- (2);
\path[edge] (2) -- (3);
\path[edge] (3) -- (4);
\path[edge] (4) -- (5);
\path[edge] (6) -- (2);
\end{tikzpicture}
\caption{The unique smallest TI tree.}
\label{fig:smallest_ti_1}
\end{subfigure}
\hfill
\begin{subfigure}[b]{0.45\textwidth}
\centering
\begin{tikzpicture}[scale=1.0]
\tikzstyle{vertex}=[draw,circle,minimum size=4pt,inner sep=1pt,fill=black]
\tikzstyle{edge}=[draw,thick]
\tikzstyle{dedge}=[draw,thick,dashed]

\node[vertex] (0) at (0, 0) {};
\node[vertex] (1) at (1, 0) {};
\node[vertex] (2) at (2, 0) {};
\node[vertex] (3) at (3, 0) {};
\node[vertex] (4) at (4, 0) {};
\node[vertex] (5) at (5, 0) {};
\node[vertex] (6) at (1.5, 0.866) {};

\path[edge] (0) -- (1);
\path[edge] (1) -- (2);
\path[edge] (2) -- (3);
\path[edge] (3) -- (4);
\path[edge] (4) -- (5);
\path[edge] (6) -- (1);
\path[edge] (6) -- (2);
\end{tikzpicture}
\caption{The unique smallest TI unicyclic graph.}
\label{fig:smallest_ti_3}
\end{subfigure}
\\
\begin{subfigure}[b]{0.90\textwidth}
\centering
\begin{tikzpicture}[scale=1.0]
\tikzstyle{vertex}=[draw,circle,minimum size=4pt,inner sep=1pt,fill=black]
\tikzstyle{edge}=[draw,thick]
\tikzstyle{dedge}=[draw,thick,dashed]

\node[vertex] (0) at (0, 0) {};
\node[vertex] (1) at (1, 0) {};
\node[vertex] (2) at (2, 0) {};
\node[vertex] (3) at (0.5, 0.866) {};
\node[vertex] (4) at (1.5, 0.866) {};
\node[vertex] (5) at (1, -1) {};
\node[vertex] (6) at (-0.5, 0.866) {};

\path[edge] (0) -- (1);
\path[edge] (1) -- (2);
\path[edge] (3) -- (4);
\path[edge] (0) -- (3);
\path[edge] (3) -- (1);
\path[edge] (1) -- (4);
\path[edge] (4) -- (2);
\path[edge] (5) -- (1);
\path[edge] (6) -- (3);
\end{tikzpicture}
\caption{The unique smallest interval transmission irregular (ITI) graph.}
\label{fig:smallest_ti_2}
\end{subfigure}

\caption{The three smallest TI graphs, with $K_1$ excluded as trivial.}
\label{fig:smallest_ti}
\end{figure}

\begin{figure}[H]
\centering
\begin{tikzpicture}[scale=1.0]
\tikzstyle{vertex}=[draw,circle,minimum size=4pt,inner sep=1pt,fill=black]
\tikzstyle{edge}=[draw,thick]
\tikzstyle{dedge}=[draw,thick,dashed]

\node[vertex] (0) at (0, 0) {};
\node[vertex] (1) at (1, 0) {};
\node[vertex] (2) at (2, 0) {};
\node[vertex] (3) at (3, 0) {};
\node[vertex] (4) at (4, 0) {};
\node[vertex] (5) at (5, 0) {};
\node[vertex] (6) at (6, 0) {};
\node[vertex] (7) at (2.5, 0.866) {};

\path[edge] (0) -- (1);
\path[edge] (1) -- (2);
\path[edge] (2) -- (3);
\path[edge] (3) -- (4);
\path[edge] (4) -- (5);
\path[edge] (5) -- (6);
\path[edge] (7) -- (2);
\path[edge] (7) -- (3);
\end{tikzpicture}
\caption{The unique smallest graph that is both TI and ETI, with $K_1$ excluded as trivial.}
\label{fig:smallest_ti_eti}
\end{figure}

\begin{figure}[H]
\centering

\begin{subfigure}[b]{0.45\textwidth}
\centering
\begin{tikzpicture}[scale=1.0]
\tikzstyle{vertex}=[draw,circle,minimum size=4pt,inner sep=1pt,fill=black]
\tikzstyle{edge}=[draw,thick]
\tikzstyle{dedge}=[draw,thick,dashed]

\node[vertex] (0) at (0, 0) {};
\node[vertex] (1) at (1, 0) {};
\node[vertex] (2) at (2, 0) {};
\node[vertex] (3) at (3, 0) {};
\node[vertex] (4) at (4, 0) {};
\node[vertex] (5) at (5, 0) {};
\node[vertex] (6) at (6, 0) {};
\node[vertex] (7) at (2, 1) {};

\path[edge] (0) -- (1);
\path[edge] (1) -- (2);
\path[edge] (2) -- (3);
\path[edge] (3) -- (4);
\path[edge] (4) -- (5);
\path[edge] (5) -- (6);
\path[edge] (7) -- (2);
\end{tikzpicture}
\caption{The unique smallest ETI tree.}
\label{fig:smallest_eti_not_ti_1}
\end{subfigure}
\hfill
\begin{subfigure}[b]{0.45\textwidth}
\centering
\begin{tikzpicture}[scale=1.0]
\tikzstyle{vertex}=[draw,circle,minimum size=4pt,inner sep=1pt,fill=black]
\tikzstyle{edge}=[draw,thick]
\tikzstyle{dedge}=[draw,thick,dashed]

\node[vertex] (0) at (0, 0) {};
\node[vertex] (1) at (1, 0) {};
\node[vertex] (2) at (2, 0) {};
\node[vertex] (3) at (3, 0) {};
\node[vertex] (4) at (4, 0) {};
\node[vertex] (5) at (5, 0) {};
\node[vertex] (6) at (6, 0) {};
\node[vertex] (7) at (1.5, 0.866) {};

\path[edge] (0) -- (1);
\path[edge] (1) -- (2);
\path[edge] (2) -- (3);
\path[edge] (3) -- (4);
\path[edge] (4) -- (5);
\path[edge] (5) -- (6);
\path[edge] (7) -- (1);
\path[edge] (7) -- (2);
\end{tikzpicture}
\caption{One of the smallest ETI unicyclic graphs.}
\end{subfigure}
\\
\begin{subfigure}[b]{0.45\textwidth}
\centering
\begin{tikzpicture}[scale=1.0]
\tikzstyle{vertex}=[draw,circle,minimum size=4pt,inner sep=1pt,fill=black]
\tikzstyle{edge}=[draw,thick]
\tikzstyle{dedge}=[draw,thick,dashed]

\node[vertex] (0) at (0, 0) {};
\node[vertex] (1) at (1, 0) {};
\node[vertex] (2) at (2, 0) {};
\node[vertex] (3) at (2.5, 0.866) {};
\node[vertex] (4) at (3, 0) {};
\node[vertex] (5) at (4, 0) {};
\node[vertex] (6) at (5, 0) {};
\node[vertex] (7) at (1.5, 0.866) {};

\path[edge] (0) -- (1);
\path[edge] (1) -- (2);
\path[edge] (2) -- (3);
\path[edge] (2) -- (4);
\path[edge] (4) -- (5);
\path[edge] (5) -- (6);
\path[edge] (7) -- (1);
\path[edge] (7) -- (2);

\end{tikzpicture}
\caption{One of the smallest ETI unicyclic graphs.}
\end{subfigure}
\hfill
\begin{subfigure}[b]{0.45\textwidth}
\centering
\begin{tikzpicture}[scale=1.0]
\tikzstyle{vertex}=[draw,circle,minimum size=4pt,inner sep=1pt,fill=black]
\tikzstyle{edge}=[draw,thick]
\tikzstyle{dedge}=[draw,thick,dashed]

\node[vertex] (0) at (0, 0) {};
\node[vertex] (1) at (1, 0) {};
\node[vertex] (2) at (2, 0) {};
\node[vertex] (3) at (3, 0) {};
\node[vertex] (4) at (4, 0) {};
\node[vertex] (5) at (5, 0) {};
\node[vertex] (6) at (2.5, 0.866) {};
\node[vertex] (7) at (3.5, 0.866) {};

\path[edge] (0) -- (1);
\path[edge] (1) -- (2);
\path[edge] (2) -- (3);
\path[edge] (3) -- (4);
\path[edge] (4) -- (5);
\path[edge] (6) -- (2);
\path[edge] (6) -- (3);
\path[edge] (7) -- (3);
\path[edge] (7) -- (6);
\end{tikzpicture}
\caption{One of the smallest ETI graphs.}
\end{subfigure}

\caption{Four graphs non-TI among the five smallest ETI graphs, with $K_1$ and $K_2$ excluded as trivial.}
\label{fig:smallest_eti_not_ti}
\end{figure}

\section{Chemical ETI graphs}
\label{sec:chemical}

In this section we consider the chemical ETI graphs of even orders. 
For an even integer $n\geq 10$, we denote by $H_n$ (see Fig.~\ref{fig:H-n}) a tree obtained by attaching a pendent path of length $\frac{n}{2}-2$ to the second vertex of $P_5$ and another pendent path of length $\frac{n}{2}-3$ to the fourth vertex of $P_5$.

\begin{figure}[ht!]
\begin{center}
\begin{tikzpicture}[scale=0.7,style=thick]
\tikzstyle{every node}=[draw=none,fill=none]
\def\vr{3pt} 

\begin{scope}[yshift = 0cm, xshift = 0cm]
\path (0,0) coordinate (x1);
\path (1,0) coordinate (x2);
\path (2,0) coordinate (x21);
\path (2,-1) coordinate (y21);
\path (3,0) coordinate (x3);
\path (6,0) coordinate (x7);
\path (-1,0) coordinate (x4);
\path (0,-1) coordinate (x6);
\path (4,0) coordinate (z1);
\path (5,0) coordinate (z2);
\path (-2,0) coordinate (z3);
\path (-3,0) coordinate (z4);
\draw (x1) -- (x2) -- (x21) -- (3.2,0);
\draw (x1) -- (x6);
\draw (x21) -- (y21);
\draw (x1) -- (-1.2,0);
\draw (x7) -- (z2) -- (3.8,0);
\draw (z4) -- (-1.8,0);
\draw (2.5,0) -- (3.0,0);

\draw (x1)  [fill=black] circle (\vr);
\draw (x2)  [fill=black] circle (\vr);
\draw (x21)  [fill=black] circle (\vr);
\draw (y21)  [fill=black] circle (\vr);
\draw (x3)  [fill=black] circle (\vr);
\draw (x7)  [fill=black] circle (\vr);
\draw (x4)  [fill=black] circle (\vr);

\draw (x6)  [fill=black] circle (\vr);
\draw (z1)  [fill=black] circle (\vr);
\draw (z2)  [fill=black] circle (\vr);
\draw (z3)  [fill=black] circle (\vr);
\draw (z4)  [fill=black] circle (\vr);
\draw (3.5,0) node {$\cdots$};
\draw (-1.5,0) node {$\cdots$};
\draw (4.5,0.9) node {$t$};
\draw (-2,0.9) node {$t-1$};

\draw [above] (x1)++(0, 0.1) node {$v$};
\draw [above] (x21)++(0, 0.1) node {$u$};
\draw [below] (x3)++(0, -0.1) node {$u_1$};
\draw [below] (x7)++(0, -0.1) node {$u_t$};
\draw [below] (x4)++(0, -0.1) node {$v_1$};
\draw [below] (z4)++(0, -0.1) node {$v_{t-1}$};

\draw [decorate, decoration = {brace}] (3,0.4) --  (6,0.4);
\draw [decorate, decoration = {brace}] (-3,0.3) --  (-1,0.3);
\end{scope}
\end{tikzpicture}
\end{center}
\caption{The tree $H_n$ of order $n=2t+4$ with $t\geq 3$.}
\label{fig:H-n}
\end{figure}

\begin{proposition}\label{H-n}
If $n\geq 10$ is an even integer and $\mathcal{PS} \cap \big(\{4,12,20\}+2n\big) = \emptyset$, then $H_n$ is ETI.
\end{proposition}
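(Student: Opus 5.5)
The plan is to compute every edge transmission of $H_n$ explicitly, relative to a single reference value, and then check which coincidences can occur. Write $n = 2t+4$ with $t \ge 3$, and label the vertices as in Fig.~\ref{fig:H-n}. Let $w$ denote the middle vertex of $P_5$, between $v$ and $u$. Let $v'$ and $u'$ be the leaves hanging directly from $v$ and $u$. Put $T \coloneqq \Tr_{H_n}(w)$.

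\emph{Step 1: the vertices $v,w,u$.} I will apply Lemma~\ref{equal} to the edges $vw$ and $wu$.
\begin{itemize}
\item The component of $H_n - vw$ containing $v$ has $t+1$ vertices, namely $v$, $v'$ and the path $v_1 \cdots v_{t-1}$. The other component has $t+3$ vertices. Hence $\Tr(v) = T + 2$.
\item The two components of $H_n - wu$ both have $t+2$ vertices. Hence $\Tr(u) = T$.
\end{itemize}

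\emph{Step 2: the pendent paths.} Lemma~\ref{pend} applies to the pendent path $u u_1 \cdots u_t$ (length $k = t$) and to $v v_1 \cdots v_{t-1}$ (length $k = t-1$). It gives
\[
\Tr(u_j) = T + j(j+3), \qquad \Tr(v_j) = T + 2 + j(j+5).
\]
The same lemma with $k = 1$ gives $\Tr(u') = T + n - 2$ and $\Tr(v') = T + n$. Setting $u_0 = u$ and $v_0 = v$, the edge transmissions are as follows:
\begin{itemize}
\item $\Tr(u_{j-1}u_j) = 2T + 2(j+1)^2 - 4$ for $j \in [t]$;
\item $\Tr(v_{k-1}v_k) = 2T + 2(k+2)^2 - 8$ for $k \in [t-1]$;
\item $\Tr(wu) = 2T$ and $\Tr(vw) = 2T + 2$;
\item $\Tr(uu') = 2T + n - 2$ and $\Tr(vv') = 2T + n + 2$.
\end{itemize}

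\emph{Step 3: ruling out coincidences.} Within each pendent path the values are strictly increasing, so no collisions occur there. The four special edges are pairwise distinct, since $n \ge 10$. Every path edge has transmission at least $2T + 4$, so no path edge collides with $wu$ or $vw$.

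A collision between the two path families would require $2(j+1)^2 - 4 = 2(k+2)^2 - 8$, that is, $(k+2)^2 - (j+1)^2 = 2$. This is impossible, because a difference of two squares is never congruent to $2$ modulo $4$.

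The remaining possibilities are that a path edge collides with a leaf edge. Multiplying through by $2$, each such equation turns into a perfect-square condition:
\begin{itemize}
\item $u$-path against $uu'$: $4(j+1)^2 = 2n + 4$;
\item $u$-path against $vv'$: $4(j+1)^2 = 2n + 12$;
\item $v$-path against $uu'$: $4(k+2)^2 = 2n + 12$;
\item $v$-path against $vv'$: $4(k+2)^2 = 2n + 20$.
\end{itemize}
Each case forces an element of $\{4, 12, 20\} + 2n$ to be a perfect square. This is excluded by the hypothesis $\mathcal{PS} \cap \big(\{4,12,20\}+2n\big) = \emptyset$, so $H_n$ is ETI.

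The step I expect to need the most care is the bookkeeping in Steps 1--2: getting the signs right in Lemma~\ref{equal} and the correct path lengths $k$ in Lemma~\ref{pend}. A single sign error there changes the constants $4, 12, 20$ and would make the hypothesis look insufficient. I would sanity-check the formulas on $n = 10$ by direct computation before writing the final case analysis.
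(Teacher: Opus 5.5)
Your proposal is correct and is essentially the paper's proof. It uses the same computation of all transmissions relative to $\Tr(w)=\Tr(u)$ via Lemmas~\ref{equal} and~\ref{pend}, the same families of edge transmissions, and the same reduction of path-versus-pendent-edge coincidences to $2n+4$, $2n+12$ or $2n+20$ being a perfect square. The one minor difference is that you separate the two path families by noting that $(k+2)^2-(j+1)^2=2$ is impossible modulo~$4$, which is cleaner than the paper's parity-and-size argument for $A_1\cap A_2=\emptyset$.
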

\proof Assume that $n=2t+4$ with $t\geq 3$ and $u$, $v$ are two vertices of degree $3$ in $H:=H_n$. From the definition of $H_n$, we further assume that $uu_1u_2\cdots u_t$ and $vv_1v_2\cdots v_{t-1}$ are two proper pendent paths beginning at $u$ and $v$, respectively, in $T$, and $u_0$, $v_0$ are two leaves adjacent to $u$ and $v$, respectively, such that $N_H(u)\cap N_H(v)=\{w\}$.

From the structure of $H$ and Lemma \ref{equal}, we have $\Tr(u)=Tr(w):=x$. In view of Lemmas \ref{pend} and \ref{equal}, we have $\Tr(v)=x+2$ with $\Tr(v_0)=x+2t+4$, $\Tr(v_j)=x+j^2+5j+2$ for $j\in [t-1]$ and $\Tr(u_0)=x+2t+2$ with $\Tr(u_i)=x+i^2+3i$ for $i\in [t]$. By the definition of edge transmission, we have $\mathrm{ETr}(H)=(A_0\cup A_1\cup A_2)+2x$ where \begin{align*}
A_0 & = \{0,2,2t+2,2t+6\}, \\
A_1 & = \{2(m+2)^2-4:\ m\in[t-1]_0\}, \\
A_2 & = \{2(m+1)(m+5):\ m\in [t-2]_0\}.
\end{align*}
We first prove $A_1\cap A_2=\emptyset$. To the contrary, suppose that $2(i+2)^2-4=2(j+1)(j+5)$ with $i\in [t-1]_0$ and $j \in [t-2]_0$, that is, $i^2+4i=j^2+6j+3$.
It follows that $i^2-j^2=6j-4i+3$ with $i>j$ of different parities. If $i-j=1$, we have $2j+1=2j-1$ as a clear contradiction.
If $i-j\geq 3$, we have $6j+9\leq 6j-4(j+3)+3$, that is, $6j+9\leq 2j-9$, yielding that $j\leq -\frac{9}{2}$, which contradicts the range of $j$. Thus $A_1\cap A_2=\emptyset$ holds as desired.

Note that $\{0,2\}\cap (A_1\cup A_2)=\emptyset$. If $2t+2\in A_1$, then there exists an integer $m\in [t-1]_0$ with $2t+2=2(m^2+4m+2)$, that is, $(m+2)^2=t+3$. Thus $4(m+2)^2=4t+12=2n+4$, contradicting the assumption that $2n+4\notin \mathcal{PS}$. Since $2n+12\notin \mathcal{PS}$, that is, $4t+20$ is not a perfect square, we have $2t+2\notin A_2$ (If not, $2t+2=2(m+1)(m+5)$ with $m\in [t-2]_0$, which implies that $(m+3)^2=t+5$, that is, $4t+20=2n+12=4(m+3)^2$ as a contradiction). Therefore $2t+2\notin A_1\cup A_2$. Similarly as above, $2t+6\notin A_1\cup A_2$ follows from the assumption that $\mathcal{PS}\cap \left( \{12,20\}+2n \right) = \emptyset$. Thus $A_0\cup A_1\cup A_2$ forms a partition set, that is, $H_n$ is ETI.
\qed
\medskip

Denote by $C_3^*(a,b,c)$ a graph obtained by attaching three pendent paths of lengths $a$, $b$ and $c$ to the three vertices of the triangle $C_3$, respectively. Next we show the existence of ETI graphs of order $n$ with $\left( \{4,12,20\}+2n \right) \cap \mathcal{PS} \neq \emptyset$. Before doing it, we give an equivalent statement to the fact $\left( \{4,12,20\}+2n \right) \cap \mathcal{PS} \neq \emptyset$.

\begin{lemma}
\label{equi} 
If $n\geq 10$ is an integer, then the following properties hold. 
\begin{enumerate}
\item[(i)] $2n+4 \in \mathcal{PS}$ if and only if $n\in \{2m(m+4)+6:\ m\in \mathbb{N}\}$.
\item[(ii)] $2n+12 \in \mathcal{PS}$ if and only if $n\in \{2m(m+4)+2:\ m\in \mathbb{N}\}$.
\item[(iii)] $2n+20 \in \mathcal{PS}$ if and only if $n\in \{2m(m+6)+8:\ m\in \mathbb{N}\}$. 
\end{enumerate}
\end{lemma}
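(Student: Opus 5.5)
Each part comes from the same parity argument followed by a reparametrization. The approach is to show the square must be even, solve for $n$, and then use the bound $n\ge 10$ to fix where the parameter starts.

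Fix $c\in\{4,12,20\}$ and suppose $2n+c=k^2$ for some integer $k\ge 0$. Since $2n+c$ is even, $k$ is even, say $k=2s$, and then $n=2s^2-\tfrac{c}{2}$. Conversely, every integer of the form $n=2s^2-\tfrac c2$ satisfies $2n+c=(2s)^2\in\mathcal{PS}$. So $2n+c\in\mathcal{PS}$ holds exactly when $n=2s^2-\tfrac c2$ for some integer $s\ge 0$. The three cases give:
\begin{itemize}
\item $c=4$: $n=2s^2-2$; substituting $s=m+2$ gives $n=2m(m+4)+6$.
\item $c=12$: $n=2s^2-6$; substituting $s=m+2$ gives $n=2m(m+4)+2$.
\item $c=20$: $n=2s^2-10$; substituting $s=m+3$ gives $n=2m(m+6)+8$.
\end{itemize}

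The remaining step, and the only point needing care, is to check that the hypothesis $n\ge 10$ corresponds exactly to $m\in\mathbb{N}$, i.e.\ $m\ge 1$.
\begin{itemize}
\item In (i), $n\ge 10$ forces $s^2\ge 6$, so $s\ge 3$ and $m\ge 1$. Conversely, $m\ge 1$ gives $n\ge 16$.
\item In (ii), $n\ge 10$ forces $s^2\ge 8$, so $s\ge 3$ and $m\ge 1$. Conversely, $m\ge 1$ gives $n\ge 12$.
\item In (iii), $n\ge 10$ forces $s^2\ge 10$, so $s\ge 4$ and $m\ge 1$. Conversely, $m\ge 1$ gives $n\ge 22$.
\end{itemize}
In every case the excluded value $m=0$ gives $n<10$, namely $6$, $2$ and $8$ respectively. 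This is why the statement can use $m\in\mathbb{N}$ with no extra restriction, and it completes both directions of all three equivalences.
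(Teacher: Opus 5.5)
Your proof is correct and follows the same route as the paper, which writes $2n+4=(2m+4)^2$ and solves for $n$. You also make explicit two steps the paper leaves implicit: that the square root must be even, and that $n\ge 10$ corresponds exactly to $m\ge 1$. In addition, you carry out parts (ii) and (iii), which the paper omits as analogous.
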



\proof 
We prove statement (i); the other two statements are proved analogously and the proofs are hence omitted. So let $n\ge 10$ be an integer. It can be routinely checked that if $n = 2m(m+4)+6$ for some $m\in \mathbb{N}$, then $2n+4\in \mathcal{PS}$. Conversely, assume that $2n+4\in \mathcal{PS}$, where $n\geq 10$. Then $2n+4=(2m+4)^2$ and hence $n=2m(m+4)+6$ with $m\in \mathbb{N}$. 
\qed
\medskip

\begin{proposition}
\label{c1} 
If $n\geq 10$ is even and $2n+12\in\mathcal{PS}$, then there is an ETI graph of order~$n$.
\end{proposition}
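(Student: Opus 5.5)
The plan is to use the graphs $C_3^*(a,b,c)$ just introduced. Write $n=a+b+c+3$ and choose $a,b,c$ as functions of $n$ (in fact of $m$, where $n=2m(m+4)+2$ by Lemma~\ref{equi}(ii)). Let $x,y,z$ be the triangle vertices carrying the arms of lengths $a,b,c$. Lemma~\ref{equal} applied to the triangle edges gives
\[
\Tr(x)-\Tr(y)=n(y,x)-n(x,y)=b-a,
\]
and similarly for the other pairs. Hence $\Tr(x)=K-a$, $\Tr(y)=K-b$ and $\Tr(z)=K-c$ for a common constant $K$. If $x_j$ denotes the $j$-th vertex on the $a$-arm, Lemma~\ref{pend} gives $\Tr(x_j)=K-a+j(j+n-2a-1)$, and likewise on the other two arms.

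The edge transmissions therefore all lie in $2K+(B_0\cup B_a\cup B_b\cup B_c)$, where $B_0=\{-a-b,-a-c,-b-c\}$ comes from the triangle and
\[
B_a=\{-2a+j(j+n-2a-1)+(j+1)(j+n-2a):\ j\in[a-1]_0\}=\{2j^2+2j(n-2a)+n-3a:\ j\in[a-1]_0\},
\]
with $B_b$ and $B_c$ defined analogously. Each $B_\bullet$ is a strictly increasing quadratic sequence, so it has no internal repetitions. If $a,b,c$ are distinct, $B_0$ has three distinct elements. The ETI property then reduces to showing that these four sets are pairwise disjoint.

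I would fix the arm lengths to make the cross comparisons tractable. A natural first try is $a=1$, $b=2$, $c=n-6$, the analogue of the $S(1,2,4)$-type trees. Then $B_a=\{n-3\}$ and $B_b=\{n-6,\,3n-12\}$ are explicit, and only $B_c$ is a long sequence: its elements have the form $2j^2+2j(12-n)+ \dots$, or, after shifting, a quadratic in $j$ with vertex near $j\approx (n-12)/2$.

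For each fixed value $v\in B_0\cup B_a\cup B_b$, the membership $v\in B_c$ becomes a quadratic equation in $j$. Multiplying by $2$ and completing the square turns it into the requirement that $2n+\text{const}$ be a perfect square, which is exactly how Proposition~\ref{H-n} worked. The hypothesis $2n+12\in\mathcal{PS}$, with $n=2m(m+4)+2$ even, pins down $2n+\text{const}=4(m+2)^2+(\text{const}-12)$. I would then argue that each arising constant differs from $12$ by an amount that cannot make this a square. Two consecutive squares near $4(m+2)^2$ differ by about $4m$, so small nonzero perturbations are excluded for $m$ large. A parity/mod-$4$ argument would also help, since even squares are $0 \bmod 4$. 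Small $m$ would be checked directly.

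The main obstacle is that the completed-square constants for $C_3^*(1,2,n-6)$ may happen to coincide with $12$ or with a value forced to be square by $2n+12\in\mathcal{PS}$, producing an actual collision. If that happens, I would shift the arm lengths, for example to $(a,b,c)=(1,3,n-7)$ or $(2,3,n-8)$. I would choose the triple so that every resulting discriminant has the form $4(m+2)^2+r$ with $0<|r|<4m$ and $r\not\equiv 0\pmod 4$ wherever possible. Concretely, I expect to spend most of the effort on this choice and on the comparisons of $B_c$ with the three triangle values. The comparisons among the short sets $B_0$, $B_a$, $B_b$ are finite and immediate.
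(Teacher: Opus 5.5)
\textbf{There is a genuine gap.} Your claim that each $B_\bullet$ is a strictly increasing quadratic sequence is false for the long arm, and this breaks every triple you propose. Take an arm $z_0z_1\cdots z_c$ rooted at the triangle vertex $z=z_0$. Lemma~\ref{equal} gives $\Tr(z_{j+1})-\Tr(z_j)=n-2c+2j$, so the transmissions along the arm decrease until $j^*=c-n/2$ and increase afterwards. Equivalently, your quadratic describing $B_c$ has its vertex at $j^*$, which is an integer because $n$ is even.

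Whenever $c\ge n/2+1$, we have $1\le j^*\le c-2$. The edge $z_{j^*}z_{j^*+1}$ is then balanced, with $n/2$ vertices on each side, and
\[
\Tr(z_{j^*-1})=\Tr(z_{j^*})+2=\Tr(z_{j^*+1})+2=\Tr(z_{j^*+2}).
\]
Hence the edges $z_{j^*-1}z_{j^*}$ and $z_{j^*+1}z_{j^*+2}$ have the same transmission, namely $2\Tr(z_{j^*})+2$. For $(1,2,n-6)$, $(1,3,n-7)$ and $(2,3,n-8)$ this happens as soon as $n\ge 18$. That covers every admissible $n=2m(m+4)+2$ with $m\ge 2$. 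For example, in $C_3^*(1,2,20)$, of order $26$, the edges $z_6z_7$ and $z_8z_9$ collide. Tuning the completed-square constants cannot fix this, because the collision is internal to a single arm.

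There are also two secondary problems. For a long arm the completed square is quadratic in $n$ (for $c=n-6$ it is $(n-12)^2+O(n)$), not of the form $2n+\mathrm{const}$, so the analogy with Proposition~\ref{H-n} does not carry over. In addition, the constant term of $B_a$ should be $n-4a$, not $n-3a$.

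\textbf{The missing idea} is that every arm must have length at most $n/2$, so that each arm sequence really is increasing. This essentially forces nearly balanced arms, which is what the paper uses. Writing $n=2m(m+4)+2$, it takes $C_3^*(x+1,x,x-1)$ with $n=3x+3$ when $m\equiv 1\pmod 3$, and $C_3^*(y,y-1,y-3)$ with $n=3y-1$ otherwise. The splitting by $m \bmod 3$ guarantees that $x=3(2a^2+4a+1)$ and $y$ are odd. A coincidence between two arms then reduces to an equation such as $(i-j)(x+i+j)=3j+i$. This is ruled out by size when $i-j\ge 2$ and by the parity of $x$ (or $y$) when $i-j=1$. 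So the hypothesis $2n+12\in\mathcal{PS}$ enters only through these arithmetic forms and parities, not through excluding nearby perfect squares as you planned.
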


\proof 
By Lemma~\ref{equi}(ii) we have $n=2m(m+4)+2$ for some $m \in \mathbb{N}$. We will now distinguish between the following two cases. 

\medskip\noindent
{\bf Case 1}: $m \equiv 1 \pmod 3$.

In this case, $m=3a+1$ with $a\geq 0$. Then $n=3x+3$ with $x=3(2a^2+4a+1)$. Let $G=C_3^*(x+1,x,x-1)$ with $v_i\in V(G)$ for $i\in [3]$ at which a pendent path of length $x+2-i$ is attached. Assume that $\Tr(v_1)=g$. From the structure of $G$ and Lemma \ref{equal}, we have $\Tr(v_2)=g+1$, $\Tr(v_3)=g+2$ and $\Tr(G)-g=A_1\cup A_2\cup A_3\cup\{0,1,2\}$ where \begin{align*}
A_1 & = \{tx+t^2:\ t\in[x+1]\}, \\
A_2 & = \{tx+(t+1)^2:\ t\in[x]\}, \\
A_3 & = \{tx+t(t+4)+2:\ t\in[x-1]\}.
\end{align*}
By the definition of edge transmission, we have $\mathrm{ETr}(G)-2g=E_1\cup E_2\cup E_3\cup \{1,2,3\}$ with \begin{align*}
E_1 & = \{(2s-1)x+(s-1)^2+s^2:\ s\in[x+1]\}, \\
E_2 & = \{(2s-1)x+(s+1)^2+s^2:\ s\in[x]\}, \\
E_3 & = \{(2s-1)x+2s^2+6s+1:\ s\in[x-1]\}.
\end{align*}
Note that $\min E_i> 3$ for $i\in [3]$. Next we show that $E_p\cap E_q=\emptyset$ for any distinct $p,q\in [3]$. If $(2i-1)x+(i-1)^2+i^2=(2j-1)x+j^2+(j+1)^2$ with $i\in [x+1]$ and $j\in [x]$, then $i>j$. It follows that $(i-j)x=(i+j)(j-i+1)>0$, yielding that $i-1<j<i$ as a contradiction. Thus $E_1\cap E_2=\emptyset$. If there are two integers $i\in [x+1]$ and $j\in [x-1]$ such that $(2i-1)x+(i-1)^2+i^2=(2j-1)x+2j^2+6j+1$, then $(i-j)(x+i+j)=3j+i$ with $i>j$. If $i-j=1$, we have $x=2j$ as a contradiction to the parity of $x$. If $i-j\geq 2$, we have $3j+i\geq 2(i+j+3)$, that is, $j\geq i+6$, contradicting the fact that $i>j$. Therefore $E_1\cap E_3=\emptyset$. Similarly as above, we have $E_2\cap E_3=\emptyset$. Thus $G=C_3^{*}(x+1,x,x-1)$ is ETI.

\medskip\noindent
{\bf Case 2}: $m \bmod 3 \in \{0,2\}$.

Assume first that $m=3b$. Then $n=3y-1$ with $y=6b^2+8b+1$ where $b\geq 1$. Note that $y\geq 15$. Let $H=C_3^*(y,y-1,y-3)$ with $u_i\in V(H)$ at which a pendent path of length $y+1-i$ with $i\in [2]$ and $u_3\in V(H)$ at which a pendent path of length $y-3$ is attached. Assume that $\Tr_H(u_1)=h$. By Lemma \ref{equal} and the structure of $H$, we have $\Tr(u_2)=h+1$ and $\Tr(u_3)=h+3$ with $\Tr(H)-h=\{0,1,3\}\cup B_1\cup B_2\cup B_3$ where \begin{align*}
B_1 & = \{ty+t^2-2t:\ t\in[y]\}, \\
B_2 & = \{ty+t^2+1:\ t\in[y-1]\}, \\
B_3 & = \{ty+t(t+4)+3:\ t\in[y-3]\}.
\end{align*}  By the definition of edge transmission, we have $\mathrm{ETr}(H)-2h=F_1\cup F_2\cup F_3\cup \{1,3,4\}$ with \begin{align*}
F_1 & = \{(2s-1)y+2s^2-6s+3:\ s\in[y]\}, \\
F_2 & = \{(2s-1)y+2s^2-2s+3:\ s\in[y-1]\}, \\
F_3 & = \{(2s-1)y+2s^2+6s+3:\ s\in[y-3]\}.
\end{align*}
Note that $\min \bigcup\limits_{i=1}^{3}F_i>4$. If there exist two equal numbers $(2p-1)y+2p^2-6p+3\in F_1$ and $(2q-1)y+2q^2-2q+3\in F_2$, then $(p-q)(p+q+y)=3p-q$ with $p>q$. If $p-q=1$, we get $y=2$, contradicting the range of $y$. If $p-q=2$, we have $y+q=1$ as a clear contradiction. If $p-q\geq 3$, we have $3p-q\geq 3p+3q+3y$, that is, $4q+3y\leq 0$, a contradiction again. Thus $F_1\cap F_2=\emptyset$. For two equal numbers $(2p-1)y+2p^2-6p+3\in F_1$ and $(2q-1)y+2q^2+6q+3\in F_3$, we have $(p-q)(p+q+y)=3(p+q)$ with $p>q$. If $p-q=1$, we have $y=4q+2$, contradicting the parity of $y$ with $y=6b^2+8b+1$. If $p-q\geq 2$, we have $p+q\geq 2y$, contradicting the respective ranges of $p$ and $q$ with $p\leq y$ and $q\leq y-3$. Therefore $F_1\cap F_3=\emptyset$. Similarly as above, we have $F_2\cap F_3=\emptyset$. Thus $H$ is ETI.

 Assume second that $m=3c+2$. Then $n=3z-1$ with $z=6c^2+16c+9$. We consider the graph $J=C_3^*(z,z-1,z-3)$. By a very analogous reasoning to that in the proof  of the above case $m=3b$, we can prove that $J$ is ETI and we are done. 
\qed
\medskip

Denote by $W$ a graph obtained by joining with an edge one vertex of a triangle and one vertex of another triangle. For a positive integer $m$ with $n=2m(m+4)+6$, we denote by $W(a,b;c)$ a graph obtained from $W$ by attaching a pendent path of length $a$ at a $2$-degree vertex, say $u$, and a pendent path of length $b$ at the $2$-degree neighbor of $u$, and a pendent path of length $c$ at one of the remaining $2$-degree vertices. See in Fig.~\ref{fig:W} the graph $W(3,4;3)$. Next we show the attainable property of $n$ on ETI graphs with even  $n\geq 10$ with $2n+4\in\mathcal{PS}$.

\begin{figure}[ht!]
\begin{center}
\begin{tikzpicture}[scale=0.7,style=thick]
\tikzstyle{every node}=[draw=none,fill=none]
\def\vr{3pt} 

\begin{scope}[yshift = 0cm, xshift = 0cm]
\path (1,0) coordinate (x1);
\path (2,0) coordinate (x2);
\path (1.5,-1) coordinate (y1);
\path (0,0) coordinate (z1);
\path (3,0) coordinate (x3);
\path (-1,0) coordinate (x4);
\path (-2,0) coordinate (x5);
\path (-0.5,-1) coordinate (z2);
\path (-1.5,-1) coordinate (z3);
\path (-2.5,-1) coordinate (z4);
\path (-3.5,-1) coordinate (z5);
\path (-4.5,-1) coordinate (z6);
\path (4,0) coordinate (x6);
\path (5,0) coordinate (x7);
\path (-4,0) coordinate (x8);
\path (-3,0) coordinate (x10);
\draw (x4) -- (x5) -- (x10) -- (x8);
\draw (x1) -- (x3) -- (x6) -- (x7);
\draw (x1) -- (x2) -- (y1) -- (x1);
\draw (x1) -- (z1);
\draw (z1) -- (x4) -- (z2) -- (z1);
\draw (z2) -- (z3) -- (z4) -- (z5) -- (z6);
\draw (x1)  [fill=black] circle (\vr);
\draw (x2)  [fill=black] circle (\vr);
\draw (y1)  [fill=black] circle (\vr);
\draw (z1)  [fill=black] circle (\vr);
\draw (z2)  [fill=black] circle (\vr);
\draw (z3)  [fill=black] circle (\vr);
\draw (z4)  [fill=black] circle (\vr);
\draw (z5)  [fill=black] circle (\vr);
\draw (z6)  [fill=black] circle (\vr);
\draw (x3)  [fill=black] circle (\vr);
\draw (x4)  [fill=black] circle (\vr);
\draw (x5)  [fill=black] circle (\vr);
\draw (x6)  [fill=black] circle (\vr);
\draw (x7)  [fill=black] circle (\vr);
\draw (x8)  [fill=black] circle (\vr);
\draw (x10)  [fill=black] circle (\vr);

\draw (-1, 0.4) node {$u_3$};
\draw  (0,0.4) node {$u$}; 
\draw (1,0.4) node {$u_1$};
\draw [below]  (-0.5,-1.) node {$u_2$};

\end{scope}
\end{tikzpicture}
\end{center}
\caption{The graph  $W(3,4;3)$.}
\label{fig:W}
\end{figure}

\begin{proposition}
\label{c2} 
If $n\geq 10$ is even and $2n+4\in \mathcal{PS}$, then there is an ETI graph of order~$n$.
\end{proposition}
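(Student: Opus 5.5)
By Lemma~\ref{equi}(i), $2n+4\in\mathcal{PS}$ means $n=2m(m+4)+6$ for some $m\in\mathbb{N}$. The paper has already introduced the family $W(a,b;c)$ for exactly these orders, so I would prove Proposition~\ref{c2} by exhibiting one member of that family and checking it is ETI.

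\textbf{Choice of parameters.} $W$ has $6$ vertices, so we need $a+b+c=n-6=2m(m+4)$, which is even. Following the pattern of Proposition~\ref{c1}, I would take the three arm lengths nearly equal, for instance $a=b=k$ and $c=n-6-2k$ with $k\approx (n-6)/3$. The exact offsets (such as $\pm1$ or $\pm2$) would be fixed only after seeing which parity and residue conditions the final disjointness arguments need. As in Proposition~\ref{c1}, I may have to split according to $m \bmod 3$, and possibly $m \bmod 2$, so that the arm lengths are integers with the required parity.

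\textbf{Computing the transmissions.}
\begin{itemize}
\item Use Lemma~\ref{equal} on the bridge $uu_1$ and on the triangle edges. On a triangle edge $xy$, $n(x,y)$ is the set of vertices hanging off $x$, namely its arm plus $x$ itself, together with whatever lies beyond it.
\item This expresses the transmissions of the six core vertices in terms of a single base value $h=\Tr(u_1)$ plus explicit small linear functions of $a,b,c$.
\item Apply Lemma~\ref{pend} along each of the three pendent paths. This gives quadratic formulas: the $j$-th vertex on the arm of length $\ell$ rooted at $r$ has transmission $\Tr(r)+j(j+n-2\ell-1)$.
\item Summing consecutive transmissions gives $\mathrm{ETr}(W(a,b;c))-2h$ as a finite set $D_0$ of about six core-edge values, together with three quadratic families $F_1,F_2,F_3$ of the form $\{(2s-1)L_i+2s^2+\alpha_i s+\beta_i\}$, one for each arm.
\end{itemize}

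\textbf{Proving the edge transmissions are distinct.}
\begin{itemize}
\item Within each $F_i$ the values are strictly increasing in $s$, so each family is injective.
\item For $F_i\cap F_j$, equate the two expressions and factor, as in Proposition~\ref{c1}, into the form $(p-q)(p+q+\text{const})=\text{linear}(p,q)$. Then split on $p-q=1$, $p-q=2$, and $p-q\ge 3$. Small differences are ruled out by parity or size of the chosen lengths; large differences are ruled out by the ranges of $p$ and $q$.
\item Separately, check that the constants in $D_0$ are distinct from each other and lie below the minimum of the families, or otherwise avoid them.
\end{itemize}

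\textbf{Main obstacle.} The hard part is tuning $a,b,c$ so that every case $p-q=1$ (and possibly $p-q=2$) in the pairwise comparisons produces a genuine contradiction. This is where $2n+4\in\mathcal{PS}$ works against us: it is precisely the condition that made $H_n$ fail, so some coincidence must be avoided by construction. I would first try $a,b,c$ differing by small amounts, chosen so that the critical linear equations force an odd number to equal an even number. If one residue class of $m$ resists this, I would handle it by a different small shift of the arm lengths. Very small $m$ that fall outside the generic argument would be checked directly.
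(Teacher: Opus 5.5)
\textbf{Gap: no construction is fixed, the suggested instance fails, and $n=48$ is missed.} Your outline follows the paper's route: the family $W(a,b;c)$, Lemmas~\ref{equal} and~\ref{pend}, factoring a coincidence as $(p-q)(\cdots)=\cdots$ and splitting on $p-q$, and a split by $m \bmod 3$. But it stops where the proof begins. For an existence statement the explicit construction and its verification are the proof, and you never fix the arm lengths.

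The one instance you do suggest cannot work. In $W(a,b;c)$ the arms of lengths $a$ and $b$ hang on the \emph{same} triangle. So for $a=b$, swapping them is an automorphism. The two edges from that triangle's bridge vertex to the two arm roots then have equal transmission, as do all corresponding arm edges. The two arms on the shared triangle must differ; equal lengths are harmless only across the two triangles. The paper uses:
\begin{itemize}
\item $W(x-2,x-1;x-2)$ with $n=3x+1$, $x$ odd, when $m\equiv 1 \pmod 3$;
\item $W(y-1,y+1;y)$ with $n=3y+6$, $y$ even, when $m\equiv 2 \pmod 3$, or when $m\equiv 0 \pmod 3$ and $m\ge 6$.
\end{itemize}
The parity of $x$ or $y$ is what settles most of the small-difference cases and separates most of the core values from the arm families.

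The second gap is your expectation that the core-edge constants lie below the arm families ``or otherwise avoid them'' generically. They do not lie below. In $W(y-1,y+1;y)$, the edge joining the degree-$2$ vertex to the arm root in the one-arm triangle has transmission $2h+5y+6$. This value is even, so parity does not help, and it sits among the arm values. For $m=3$, i.e.\ $n=48$ and $y=14$, it equals $2h+3y+34=2h+76$. That is the transmission of the second edge on the arm of length $y-1$, so $W(13,15;14)$ is not ETI. Your fallback of ``checking small $m$ directly'' would only confirm this failure. A different graph is needed for $n=48$; the paper uses $C_3^*(1,20,24)$.

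To turn the outline into a proof you must:
\begin{itemize}
\item fix the arm lengths for each residue class of $m$, with unequal arms on the shared triangle;
\item check every core value against every arm family, not only the families against each other;
\item supply a separate construction for $n=48$.
\end{itemize}
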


\proof 
By Lemma~\ref{equi} we know that $n=2m(m+4)+6$ for some $m \in \mathbb{N}$. We again distinguish the following two cases. 

\medskip\noindent
{\bf Case 1}: $m \equiv 1 \pmod 3$.

In this case, $m=3a+1$. Then $n=3x+1$ with $x=6a^2+12a+5$. Let $G=W(x-2,x-1; \linebreak x-2)$ with $u\in V(G)$  of degree $3$ with $N_G(u)=\{u_1,u_2,u_3\}$ where $u_1$ lies in a triangle and at $u_i$ a pendent path of length $x-i+1$ is attached for $i\in\{2,3\}$. Now we prove that $G$ is ETI. Assume that $\Tr(u)=g$  and $N_G(u_1)\setminus\{u\}=\{v_1,v_2\}$ with $d_G(v_1)=2$. Then, in $G$, at $v_2$ a pendent path of length $x-2$ is attached.  From the structure of $G$ and Lemma \ref{equal}, we have $\Tr(u_1)-g=x-1$, $\Tr(u_i)-g=i$ for $i\in\{2,3\}$, $\Tr(v_1) - \Tr(u_1) = 2x$ and $\Tr(v_2) - \Tr(u_1)=x+2$. Then $\Tr(G)-g=A_1\cup A_2\cup A_3\cup\{0,2,3,x-1,3x-1,2x+1\}$ where \begin{align*}
A_1 & = \{(k+2)x+k^2+4k+1:\ k\in[x-2]\}, \\
A_2 & = \{kx+(k+1)^2+1:\ k\in[x-1]\}, \\
A_3 & = \{kx+(k+1)(k+3):\ k\in[x-2]\}.
\end{align*}
By the definition of edge transmission, we have $\mathrm{ETr}(G)-2g=\bigcup\limits_{i=0}^{3}B_i$ with \begin{align*}
B_0 & =\{2,3,5,x-1,3x,4x-2,5x\}, \\
B_1 & = \{(2k+3)x+2k^2+6k-1:\ k\in[x-2]\}, \\
B_2 & = \{(2k-1)x+2k^2+2k+3:\ k\in[x-1]\}, \\
B_3 & = \{(2k-1)x+2k^2+6k+3:\ k\in[x-2]\}.
\end{align*}
Note that $\min B_1=5x+7>5x=\max B_0$. So $B_0\cap B_1=\emptyset$. Since $x\geq 5$ is odd, we have $B_0\cap (B_2\cup B_3)=\emptyset$. To prove the ETI property of $G$, we only need to prove $B_i\cap B_j=\emptyset$ for any distinct $i,j\in [3]$. If there are two integers $p\in [x-2]$ and $q\in [x-1]$ such that $(2p+3)x+2p^2+6p-1=(2q-1)x+2q^2+2q+3$, then $(q-p)(x+q+p+1)=2(x-1+p)$  with $q>p$. If $q-p=1$, we have $x=4$, contradicting the range of $x$. If $q-p\geq 2$, then $2x-2+2p\geq 2(x+2p+3)$, yielding $p\leq -4$ as a clear contradiction. Therefore $B_1\cap B_2=\emptyset$. If there exist two integers $p,q\in [x-2]$ with $(2p+3)x+2p^2+6p-1=(2q-1)x+2q^2+6q+3$, then $(q-p)(x+q+p+3)=2x-2$ with $q>p$. If $q-p=1$, we have $x=2p+6$ as a clear contradiction to the parity of $x$. If $q-p\geq 2$, then $2x-2\geq 2(x+2p+5)$, that is, $p\leq -3$ as a contradiction. Thus $B_1\cap B_3=\emptyset$. If there are two integers $p\in [x-1]$ and $q\in [x-2]$ with $(2p-1)x+2p^2+2p+3=(2q-1)x+2q^2+6q+3$, then $(p-q)(x+p+q)=3q-p$ with $p>q$. It follows that $3q-p\geq 5+p+q$. Then $q-p\geq \frac{5}{2}$, that is, $p-q\leq -\frac{5}{2}$, as a contradiction again, implying that $B_2\cap B_3=\emptyset$. Therefore  $G=W(x-2,x-1;x-2)$ is ETI as desired.

\medskip\noindent
{\bf Case 2}: $m \bmod 3 \in \{0,2\}$.
 
 If $m=3b+2$, then $n=2m(m+4)+6=3y+6$ with $y=6b^2+16b+8$. Let $H=W(y-1,y+1;y)$ where $w\in V(H)$ is the unique vertex of degree $3$ with three neighbors of degree $3$. Next we will prove that $H$ is ETI. Assume that $N_H(w)=\{w_1,w_2,w_3\}$ with $\Tr(w)=h$ where at $w_1$ a pendent path of length $y-1$ is attached, at $w_2$ a pendent one of length $y+1$ is attached and $w_3$ lies on a triangle containing a $2$-degree vertex and another vertex at which a pendent path of length $y$ is attached. Similarly as above, we have $\Tr(H)-h=\bigcup_{i=0}^{3}D_i$ where $D_0=\{0,2,4,y,2y+3,3y+3\}$ and
\begin{align*}
D_1 & = \{(k+2)y+k(k+5)+3:\  k\in[y]\}, \\
D_2 & = \{ky+k(k+3)+2:\ k\in[y+1]\}, \\
D_3 & = \{ky+k(k+7)+4:\  k\in[y-1]\}.
\end{align*}
Then it follows that $\mathrm{ETr}(H)-2h=\bigcup_{i=0}^{3}E_i$ where $E_0=\{2,4,6,y,3y+3,4y+3,5y+6\}$ and \begin{align*}
E_1 & = \{(2k+3)y+2k^2+8k+2:\ k\in[y]\}, \\
E_2 & = \{(2k-1)y+2k^2+4k+2:\ k\in[y+1]\}, \\
E_3 & = \{(2k-1)y+2k^2+12k+2:\ k\in[y-1]\}.
\end{align*}
Since $3y+3$ and $4y+3$ are both odd with even $y\geq 8$, we have $E_0\cap (\bigcup_{i=1}^{3}E_i)=\emptyset$. To prove the ETI property of $H$, it suffices to prove that $\bigcup_{i=1}^{3}E_i$ is a partition set. For any two equal numbers $(2p-1)y+2p^2+4p+2\in E_2$ and $(2q-1)y+2q^2+12q+2\in E_3$, we have $(p-q)(y+p+q)=6q-2p$ with $p>q$. Since $y\geq 8$ is even, we conclude that $p-q\geq 2$ is even and $6q-2p\geq 2(y+p+q)$, that is, $3q-p\geq y+p+q$, which implies that $q>p$ as a clear contradiction. Thus $E_2\cap E_3=\emptyset$. If there are two equal numbers $(2p+3)y+2p^2+8p+2\in E_1$ and $(2q-1)y+2q^2+4q+2\in E_2$, then $2(y+q)=(q-p)(y+p+q+4)$ with $q > p$.
Since $y\geq 8$ is even, we conclude that $q - p\geq 2$ is even and $2(y + q) \ge 2(y + p + q + 4)$, which is impossible. Therefore $E_1\cap E_2 =\emptyset$. By a very similar reasoning to that for $E_1\cap E_2 =\emptyset$, we get $E_1\cap E_3 =\emptyset$ and omit its proof. Thus $H$ is ETI as desired.

If $m=3c$, then $n=3z+6$ with $z=2c(3c+4)$. For $c=1$, we assume that $L=C_3^*(1,20,24)$, which is of order $48$. From a routine check, we get that $L$ is ETI. For $c\geq 2$, we let $J=W(z-1,z+1;z)$. By an analogous proof we can show that $J$ is ETI, hence we omit the details.
\qed
\medskip

Let $A$ be a graph obtained by deleting an edge from the complete graph $K_4$. For four integers $p_2\geq p_1\geq0$ and $q_2\geq q_1\geq0$, we denote by $A(p_2,p_1;q_2,q_1)$ a graph obtained from $A$ by attaching two pendent paths of lengths $p_1$ and $p_2$, respectively, at two $3$-degree vertices of $A$, and two pendent paths of lengths $q_1$ and $q_2$, respectively, at two $2$-degree vertices of $A$. In particular, if $p_1=q_1=0$, then $A(p_2,p_1;q_2,q_1)$ will be written as $A(p_2;q_2)$. As an example, $A(3,2;2,1)$ is shown in Fig.~\ref{fig:A}.

\begin{figure}[ht!]
\begin{center}
\begin{tikzpicture}[scale=0.7,style=thick]
\tikzstyle{every node}=[draw=none,fill=none]
\def\vr{3pt} 

\begin{scope}[yshift = 0cm, xshift = 0cm]
\path (1,0) coordinate (x4);
\path (2,0) coordinate (x5);
\path (0.5,1) coordinate (y1);
\path (0.5,2) coordinate (y2);
\path (0,0) coordinate (x3);
\path (3,0) coordinate (x6);
\path (-1,0) coordinate (x2);
\path (-2,0) coordinate (x1);
\path (0.5,-1) coordinate (z2);
\path (1.5,-1) coordinate (z3);
\path (2.5,-1) coordinate (z4);
\path (4,0) coordinate (x7);

\draw (x3) -- (y1) -- (y2);
\draw (x1) -- (x2) -- (x3) -- (x4) -- (x5) --(x6) -- (x7);
\draw (x4) -- (y1);
\draw (x1) -- (z1);
\draw (z1) -- (x4) -- (z2) -- (z1);
\draw (z2) -- (z3) -- (z4);
\draw (x1)  [fill=black] circle (\vr);
\draw (x2)  [fill=black] circle (\vr);
\draw (y1)  [fill=black] circle (\vr);
\draw (y2)  [fill=black] circle (\vr);
\draw (z2)  [fill=black] circle (\vr);
\draw (z3)  [fill=black] circle (\vr);
\draw (z4)  [fill=black] circle (\vr);
\draw (x3)  [fill=black] circle (\vr);
\draw (x4)  [fill=black] circle (\vr);
\draw (x5)  [fill=black] circle (\vr);
\draw (x6)  [fill=black] circle (\vr);
\draw (x7)  [fill=black] circle (\vr);
\end{scope}
\end{tikzpicture}
\end{center}
\caption{The graph $A(3,2;2,1)$.}
\label{fig:A}
\end{figure}

\begin{proposition}\label{c3} 
If $n\geq 10$ is even and $2n+20\in\mathcal{PS}$, then there is an ETI graph of order~$n$.
\end{proposition}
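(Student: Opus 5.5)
By Lemma~\ref{equi}(iii), $2n+20\in\mathcal{PS}$ means $n=2m(m+6)+8$ for some $m\in\mathbb{N}$. This $n$ is even, and $n \bmod 3$ depends only on $m \bmod 3$. Indeed, $2m(m+6)+8\equiv 2m^2+2 \pmod 3$, so $n\equiv 2$ when $3\mid m$ and $n\equiv 1$ otherwise. Following Propositions~\ref{c1} and~\ref{c2}, I would split into cases by $m \bmod 3$. In each case I write $n=3x+r$ with $x$ an explicit quadratic in the parameter ($m=3a$, $3a+1$, $3a+2$), and then use a graph from the family $A(p_2,p_1;q_2,q_1)$ introduced just before the statement, with pendent path lengths $x+\text{const}$ chosen so that the order equals $n$.

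\textbf{Computing transmissions.} For each candidate graph $G$, I fix $h=\Tr(w)$ for one core vertex $w$ of $A$. Lemma~\ref{equal} then gives the transmissions of the other three core vertices as $h$ plus small linear expressions in $x$, since $n_G(\cdot,\cdot)$ on core edges is read off from the path lengths. Lemma~\ref{pend} gives every vertex on a pendent path a transmission of the form $\Tr(\text{root})+j(j+n-2k-1)$. So, as in the earlier proofs, $\mathrm{ETr}(G)-2h$ splits into four parts. The first is a finite set $B_0$ from the five core edges $(4$ core vertices, $A=K_4-e)$ and the first edge of each pendent path. The other parts are families $B_i=\{(2s-1)x+\alpha_i s^2+\beta_i s+\gamma_i\}$, one per pendent path, with $\alpha_i=2$ throughout. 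I would choose the lengths so that the four arms have pairwise distinct offsets. The simplest candidate is $A(p_2;q_2)$ with two arms, which makes the bookkeeping lighter; it is natural because $r$ is small.

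\textbf{Disjointness.} Within one $B_i$ the values are strictly increasing in $s$. For two arm families, equating elements gives $(p-q)(x+p+q+c)=\ell(p,q,x)$ with $\ell$ linear. The case $p-q=1$ is killed by a parity condition on $x$; this is exactly why the residue classes of $m$ and the parity of $x$ matter. The case $p-q\ge2$ is killed by size, since the left side exceeds the right. $B_0$ is separated from the arm families by parity or by comparing $\max B_0$ with $\min B_i$.

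\textbf{Main obstacle.} The delicate step is choosing the path lengths in each residue class so that all the linear congruence obstructions, the $p-q=1$ cases, fail simultaneously. I would first try $A(x+c_1;x+c_2)$ and tune $c_1,c_2$ by parity. If a class resists this, I would add the two further arms, $p_1,q_1>0$. Small exceptional values of $m$ (for example $m=1$, giving $n=22$) would be settled by an explicit graph checked directly, as was done with $C_3^*(1,20,24)$ in Proposition~\ref{c2}.
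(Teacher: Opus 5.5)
Your proposal has a genuine gap. It is a search strategy rather than a proof: you never fix a graph, and the step you call the ``main obstacle'' (choosing the path lengths and checking that no two edge transmissions coincide) is the entire content of the proposition.

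The template you borrow from Propositions~\ref{c1} and~\ref{c2} also does not fit the family $A(p_2,p_1;q_2,q_1)$. Those graphs have three arms of length about $n/3$. That is why $n=3x+r$, the split by $m \bmod 3$, and arm edge transmissions of the form $(2s-1)x+2s^2+\cdots$ appear there. Your ``simplest candidate'' $A(p_2;q_2)$ has order $p_2+q_2+4$, so its two arms cannot both have length $x+O(1)$ when $n=3x+r$. They have length about $n/2$, and then Lemma~\ref{pend} gives $\Tr(v_j)-\Tr(v_0)=j(j+O(1))$ with no linear $x$-term. So the mechanism ``$(p-q)(x+p+q+c)=\ell$, kill $p-q=1$ by the parity of $x$'' never arises.

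Your claim that the core values $B_0$ are separated from the arm families ``by parity or by comparing $\max B_0$ with $\min B_i$'' is also unfounded here. Core transmissions differ by whole arm sizes, so core edge values are of order $n$ and lie inside the range of the arm values, which reach order $n^2$. Some of them share the arm values' parity. This is exactly where the hypothesis $2n+20\in\mathcal{PS}$ must be used, and your plan uses it only to rewrite $n$ and read off residues.

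The paper splits by the parity of $m$ and uses arms of \emph{equal} length, the opposite of your ``pairwise distinct offsets''.
\begin{itemize}
\item For even $m$ it takes $A(N;N)$ with $N=m(m+6)+2$. The two arm transmission families are $\{k^2+3k\}$ and its translate by $1$, so the arm edge families are $B_1=\{2k^2+4k-2\}$ and $B_2=B_1+2$. These are disjoint because consecutive gaps in $B_1$ are at least $10$. The only core value of the same parity is $N$. Now $N\in B_1\cup B_2$ would force $m(m+6)+6=2(k+1)^2$ or $m(m+6)+4=2(k+1)^2$. Both are impossible modulo $8$, since $8\mid m(m+6)$ for even $m$.
\item For odd $m$ it takes $A(N',1;N',1)$ with $N'=m(m+6)+1$. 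Now $E_2=E_1+4$ with gaps at least $14$. The even core values $2m(m+6)+2$, $4m(m+6)+6$ and $6m(m+6)+10$ are excluded by a factoring argument and by quadratic residues modulo $8$, $3$ and $4$.
\end{itemize}
The idea your proposal is missing is this translate structure. It reduces arm-versus-arm disjointness to a trivial gap estimate, leaving only finitely many core values to rule out by congruences that depend on the specific form $n=2m(m+6)+8$.
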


\proof By Lemma \ref{equi}, we can assume that $n=2m(m+6)+8$ with $m \in \mathbb{N}$. We first assume that $m$ is even. Let $H=A(m(m+6)+2;m(m+6)+2)$ with $v\in V(H)$ as the unique vertex of degree $4$ in $H$ with $\Tr(v)=h$. Then $\Tr(H)-h=A_0\cup A_1\cup A_2$ with $A_0=\{m(m+6)+2,2m(m+6)+5\}$, $A_1=\{k^2+3k:\  k\in[m(m+6)+2]_0\}$ and $A_2=A_1+1$. Then $\mathrm{ETr}(H)-2h=B_0\cup B_1\cup B_2$ with
\begin{align*}
B_0 & = \{1,m(m+6)+2,2m(m+6)+5,m(m+6)+3,3m(m+6)+7\}, \\
B_1 & = \{2k^2+4k-2:\ k\in[m(m+6)+2]\},~ B_2=B_1+2.
\end{align*}
Note that the minimum difference of two distinct integers in $B_1$ is equal to \linebreak $\min_{k\in[m(m+6)+1]}{2(2k+3)}=10$. So $B_1\cap B_2=\emptyset$. Since $m$ is even, $B_0$ contains only one even integer $m(m+6)+2$. Next we show that $m(m+6)+2\notin B_1\cup B_2$. Otherwise, we first assume that $m(m+6)+2=2k^2+4k-2$ with some $k\in [m(m+6)+2]$. It follows that $m(m + 6) + 6 = 2(k + 1)^2$, which is impossible because the left-hand side is congruent to $6$ modulo $8$ (since $m$ is even), while the right-hand side is congruent either to $0$ or to $2$ modulo~$8$. If there is some $k\in [m(m+6)+2]$ with $m(m+6)+2=2k^2+4k$, then similarly to above, we can get a contradiction. Therefore $H$ is ETI.

Now we assume that $m$ is odd. Let $L=A(m(m+6)+1,1;m(m+6)+1,1)$ with $u\in V(L)$ of degree $4$, at which a pendent path of length $m(m+6)+1$ is attached, with $\Tr_L(u)=\ell$. Then $\Tr(L)-\ell=D_0\cup D_1\cup D_2$ with $D_0=\{m(m+6),2m(m+6)+2,3m(m+6)+6,4m(m+6)+8\}$, $D_1=\{k^2+5k:\ k\in[m(m+6)+1]_0\}$ and $D_2=D_1+2$. It follows that $\mathrm{ETr}(L)-2\ell=E_0\cup E_1\cup E_2$ with $$E_0=\{2,m(m+6),m(m+6)+2,2m(m+6)+2,3m(m+6)+2,4m(m+6)+6,6m(m+6)+10\},$$ $E_1=\{2k^2+8k-4:\ k\in[m(m+6)+1]\}$ and $E_2=E_1+4$. Since the difference of any two distinct numbers in $E_1$ is at least $\min_{k\in[m(m+6)]} 4k+10=14>4$, we have $E_1\cap E_2=\emptyset$. Note that $E_0$ contains only four even integers $2$, $2m(m+6)+2$, $4m(m+6)+6$ and $6m(m+6)+10$. To prove the ETI property of graph $L$, it suffices to show that $$(E_1\cup E_2)\cap \{2m(m+6)+2, 4m(m+6)+6, 6m(m+6)+10\}=\emptyset.$$

If $2m(m+6)+2=2k^2+8k-4$ for some $k\in [m(m+6)+1]$, then $k(k+4)=m(m+6)+3$ with $k>m$, that is, $(k-m)(k+m+4)=2m+3$. It follows that $2m+3\geq k+m+4>2m+4$ as a clear contradiction. Thus $2m(m+6)+2\notin E_1$. By a similar reasoning, we have $2m(m+6)+2\notin E_2$.

If $4m(m+6)+6=2k^2+8k$ for some $k\in [m(m+6)+1]$, then $k(k+4)=2m(m+6)+3$, that is, $(k + 2)^2 + 11 = 2(m + 3)^2$. Since $m$ is odd, the right-hand side is divisible by $8$, which yields a contradiction because $5$ is not a quadratic residue modulo $8$. Therefore $4m(m+6)+6\notin E_2$. Similarly to above, we get $4m(m+6)+6\notin E_1$.

Finally we consider the case of $6m(m+6)+10$. If $6m(m+6)+10=2k^2+8k-4$ for some $k\in [m(m+6)+1]$, we have $3m(m+6)+7=k(k+4)$, that is, $3(m + 3)^2 = (k + 2)^2 + 16$. The left-hand side is divisible by $3$, which yields a contradiction because $2$ is not a quadratic residue modulo $3$. Thus $6m(m+6)+10\notin E_1$. If $6m(m+6)+10=2k^2+8k$ for some $k\in [m(m+6)+1]$, then we get $3(m + 3)^2 = (k + 2)^2 + 18$. Since $m$ is odd, it follows that the left-hand side is divisible by $4$, which yields a contradiction because $2$ is not a quadratic residue modulo $4$. Therefore $6m(m+6)+10\notin E_2$, and $J$ is ETI as desired. 
\qed
\medskip

\noindent
From Propositions \ref{H-n} and \ref{c1}--\ref{c3}, we get the following result.

\begin{theorem}\label{even}
For any even $n \geq 10$, there exists a chemical ETI graph of order $n$. 
\end{theorem}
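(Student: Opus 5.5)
The plan is a straightforward case split on whether the set $\{4,12,20\}+2n$ contains a perfect square, combining the four preceding constructions.

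Fix an even integer $n \ge 10$. If $\mathcal{PS} \cap \big(\{4,12,20\}+2n\big) = \emptyset$, then Proposition~\ref{H-n} applies directly. The tree $H_n$ is ETI, and it is a chemical graph: it is a tree whose only branching vertices are the two degree-$3$ vertices $u$ and $v$ on the underlying $P_5$, so its maximum degree is $3$.

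Otherwise at least one of $2n+4$, $2n+12$, $2n+20$ is a perfect square, and I split into three cases according to which one.
\begin{itemize}
\item If $2n+12 \in \mathcal{PS}$, I invoke Proposition~\ref{c1}.
\item If $2n+4 \in \mathcal{PS}$, I invoke Proposition~\ref{c2}.
\item If $2n+20 \in \mathcal{PS}$, I invoke Proposition~\ref{c3}.
\end{itemize}
These cases need not be mutually exclusive, but that does no harm: whenever several hold, any one of the propositions already yields an ETI graph of order $n$.

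The remaining point, and the only one needing any care, is that the graphs built in those proofs are chemical. They are of the following types:
\begin{itemize}
\item $C_3^*(a,b,c)$ has maximum degree $3$, since each triangle vertex gains one pendent path.
\item $W(a,b;c)$ has maximum degree $3$. The two triangles joined by a bridge have degrees at most $3$, and each pendent path is attached at a degree-$2$ vertex, raising it to $3$.
\item $A(p_2,p_1;q_2,q_1)$ has maximum degree $4$, because a pendent path may be attached to a degree-$3$ vertex of $K_4 - e$.
\item The graph $C_3^*(1,20,24)$ used for the small case $c=1$ is subcubic.
\end{itemize}
All of them therefore have maximum degree at most $4$, as the adopted definition of chemical graph requires.

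I also need the order of each constructed graph to be exactly $n$, which the respective propositions assert. Combining the cases, every even $n \ge 10$ admits a chemical ETI graph of order $n$.
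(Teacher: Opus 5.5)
Your proposal is correct and follows the paper's argument. The paper derives the theorem in one line by combining Proposition~\ref{H-n}, which covers the case where none of $2n+4$, $2n+12$, $2n+20$ is a perfect square, with Propositions~\ref{c1}--\ref{c3}, which cover the remaining cases. Your explicit check that $H_n$ and the $C_3^*$-, $W$- and $A$-type graphs all have maximum degree at most $4$ is left implicit in the paper, and your check is right, including the point that the $A$-graphs need the subquartic convention.
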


\section{Subcubic trees that are both TI and ETI}
\label{sec:main}

In the present section, we prove Theorem~\ref{th:main} by relying on the family of subcubic trees that we call \emph{saw graphs}. As it turns out, these graphs are both TI and ETI under suitable conditions. We mention in passing that saw graphs were discovered through computational experiments using a software library that is currently still under development.

We now introduce the terminology and define the family formally. For $p \in \mathbb{N}_0$ and $n \ge 4p + 5$, we define the \emph{saw graph} $\mathrm{SW}_{n, p}$ as follows. If $n$ is odd, $\mathrm{SW}_{n, p}$ is obtained from the path $P_{n - 2p - 1}$ by selecting one of its two central vertices and attaching a leaf to each vertex within distance $p$ from it; see Fig.~\ref{odd_saw_fig}. If $n$ is even, $\mathrm{SW}_{n, p}$ is obtained from the path $P_{n - 2p - 2}$ by selecting one of its two central vertices, attaching a pendent path of length two to it, and attaching a leaf to each vertex at distance between $1$ and $p$ from it; see Fig.~\ref{even_saw_fig}.

The following two propositions give sufficient conditions under which a saw graph of odd order is TI and ETI, respectively.

\begin{proposition}\label{odd_saw_ti}
If $p \in \mathbb{N}_0$, $n \ge \max \{ 4p + 5, p(2p + 1) + 3 \}$ is odd, and 
    \begin{equation}\label{ti_odd_cond}
    \mathcal{PS} \cap \left( n + \{ 2p^2 + p - 2, 2p^2 + 3p - 1 \} + \{ j(2j - 1), j(2j + 1):\ j \in [p]_0\} \right) = \emptyset, 
    \end{equation}
    then $\mathrm{SW}_{n, p}$ is TI.
    
\end{proposition}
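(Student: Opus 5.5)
The plan is to compute all vertex transmissions of $\mathrm{SW}_{n,p}$ explicitly, relative to the transmission of the selected central vertex, and then show that no two coincide. Throughout, $n$ is odd. Let the spine be the path $P_{n-2p-1}$ and let $x_0$ be the selected central vertex. Since $n-2p-1$ is even, the remaining spine vertices split as $x_1,\dots,x_a$ on one side and $x_{-1},\dots,x_{-(a-1)}$ on the other, where $2a = n-2p-1$. Let $\ell_i$ be the leaf attached to $x_i$ for $|i|\le p$, and set $g:=\Tr(x_0)$.

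\textbf{Step 1: the middle section, $|i|\le p$.} For each spine edge $x_ix_{i+1}$, the sets $n(x_i,x_{i+1})$ and $n(x_{i+1},x_i)$ count spine vertices plus the leaves on the respective side. Lemma~\ref{equal} then gives $\Tr(x_{i+1})-\Tr(x_i)$ as an explicit linear function of $i$. Summing these differences yields $\Tr(x_{\pm i})-g$ as a quadratic in $i$. Because the two sides differ by one vertex, the offsets come out as $j(2j-1)$ and $j(2j+1)$, in some normalisation. For the leaves, Lemma~\ref{equal} gives $\Tr(\ell_i)=\Tr(x_i)+n-2$.

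\textbf{Step 2: the two tails.} Beyond $x_{\pm p}$ the spine consists of two pendent paths with roots $x_p$ and $x_{-p}$. Lemma~\ref{pend} gives their transmissions as $\Tr(x_{\pm p})+j(j+n-2k-1)$, where $k$ is the tail length. After these two steps, $\Tr(\mathrm{SW}_{n,p})-g$ is the union of four explicit quadratic families: two tail families, one middle spine family, and one leaf family shifted by $n-2$.

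\textbf{Step 3: exclude coincidences.} Next I rule out equalities within each family and between pairs of families.
\begin{itemize}
\item Within a family, the values are strictly monotone in the index, so there are no coincidences.
\item Between the middle spine values on the two sides, and between the two tails, the sides have different lengths. An equality reduces to an identity of the form $(i-j)(i+j+c)=\text{small linear term}$. As in the proofs of Propositions~\ref{H-n}--\ref{c2}, a size and parity argument kills it: either $i-j=1$ forces a parity contradiction, or $i-j\ge 2$ forces an impossible inequality.
\item Spine values lie below the leaf values, which are shifted up by $n-2$, at least in the middle section. This is where the hypothesis $n\ge p(2p+1)+3$ enters: it guarantees that the largest middle spine value $\approx p(2p+1)$ stays below the smallest leaf value.
\end{itemize}

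\textbf{Step 4: leaves against tails (the main obstacle).} The hard case is a coincidence between a leaf transmission and a tail vertex transmission. Such an equation reads
\[
n-2+(\text{quadratic offset of } \ell_{\pm j}) = t\bigl(t+\text{linear in }n\bigr)+\text{const}.
\]
Completing the square in $t$ turns this into the statement that $n+2p^2+p-2+\epsilon_j$ or $n+2p^2+3p-1+\epsilon_j$ is a perfect square, where $\epsilon_j\in\{j(2j-1),j(2j+1)\}$. This is exactly what \eqref{ti_odd_cond} forbids. The care needed here is bookkeeping: matching each of the two tails with each of the two leaf families, and tracking which shift, $2p^2+p-2$ or $2p^2+3p-1$, arises. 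Tail lengths differ by one, and the root transmissions $\Tr(x_{\pm p})$ differ, which is what produces the two shifts.

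I also need to check that the range restriction $t\le k$ does not matter, since \eqref{ti_odd_cond} forbids the square outright. Finally, the remaining coincidences between leaves and middle spine values must be absent. I expect these to be excluded by magnitude or parity, or else they too would have to be covered by the square conditions.
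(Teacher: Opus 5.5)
Your proposal is correct and follows the paper's proof essentially step for step. It uses the same transmission formulas from Lemmas~\ref{equal} and~\ref{pend}, the same elementary factoring arguments for the two sides of the middle section and for the two tails, the same use of $n\ge p(2p+1)+3$ to separate middle spine values from leaf values, and the same completion of the square showing that tail--leaf coincidences are exactly what \eqref{ti_odd_cond} forbids. The one comparison you leave hedged is really tails versus middle spine, since leaves versus middle spine is already done in your Step~3; it follows by magnitude, because every $x_i$ with $|i|>p$ has transmission at least $g+2p^2+3p+1$, whereas every $x_i$ with $|i|\le p$ has transmission at most $g+2p^2+p$.
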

\begin{proof}
    Let the vertices of $\mathrm{SW}_{n, p}$ be labeled as in Fig.~\ref{odd_saw_fig} and let $\beta \coloneqq \Tr(x_0)$. The repeated use of Lemmas~\ref{equal} and \ref{pend} yields 
\begin{align*}
\Tr(x_j) & = \beta + j(2j - 1),\ j \in [p]_0\,,\\
\Tr(x_j') & = \beta + j(2j - 1) + (n - 2),\ j \in [p]_0\,,\\
\Tr(y_j) & = \beta + p(2p - 1) + j(j + 4p),\ j \in [(n - 4p - 1)/2]_0\,,\\
\Tr(z_j) & = \beta + j(2j + 1),\ j \in [p]_0\,,\\
\Tr(z_j') & = \beta + j(2j + 1) + (n - 2),\ j \in [p]_0\,, \text{ and}\\
\Tr(w_j) & = \beta + p(2p + 1) + j(j + 4p + 2),\ j \in [(n - 4p - 3)/2]_0\,.    
\end{align*}    
Note that $x_0 \equiv z_0$ uniquely attains the smallest transmission.

    First, we prove that the $x_j$, $x_j'$, $z_j$ and $z_j'$ vertices have mutually distinct transmissions. By way of contradiction, suppose that $\Tr(x_j) = \Tr(z_h)$ for some $j \in [p]$ and $h \in [p]$. Then $j(2j - 1) = h(2h + 1)$, hence $(4j - 1)^2 = (4h + 1)^2$. If $4h + 1 = 4j - 1$, then $4j - 4h = 2$, which is impossible, and if $4h + 1 = 1 - 4j$, then $h = -j$, which is also impossible. Therefore, the $x_j$ and $z_j$ vertices have mutually distinct transmissions. Since $\Tr(x_j') = \Tr(x_j) + (n - 2)$ and $\Tr(z_j') = \Tr(z_j) + (n - 2)$, this implies that the $x_j'$ and $z_j'$ vertices also have mutually distinct transmissions. It remains to observe that the transmissions of the $x_j$ and $z_j$ vertices are at most $\beta + p(2p + 1)$, while those of the $x_j'$ and $z_j'$ vertices are at least $\beta + (n - 2)$, so the two ranges are disjoint because $n \ge p(2p + 1) + 3$.

    Now, by way of contradiction, suppose that $\Tr(y_j) = \Tr(w_h)$ for some $j \in [\frac{n - 4p - 1}{2}]$ and $h \in [\frac{n - 4p - 3}{2}]$. Then $p(2p - 1) + j(j + 4p) = p(2p + 1) + h(h + 4p + 2)$, hence $(h + 2p + 1)^2 - (j + 2p)^2 = 2p + 1$. Since $j + 2p \ge 2p + 1$, any perfect square above $(j + 2p)^2$ is greater than $(j + 2p)^2$ by at least $(2p + 2)^2 - (2p + 1)^2 = 4p + 3$, yielding a contradiction. Therefore, the $y_j$ and $w_j$ vertices, excluding $y_0$ and $w_0$, have mutually distinct transmissions. Also, the transmissions of these vertices are at least $\beta + p(2p - 1) + (4p + 1)$, while those of the $x_j$ and $z_j$ vertices are at most $\beta + p(2p + 1)$, so the two ranges are disjoint. Thus, it remains to show that a $y_j$ or a $w_j$ vertex does not have the same transmission as a $x_j'$ or a $z_j'$ vertex.

Recall that none of the numbers in \eqref{ti_odd_cond} is a perfect square. If $\Tr(y_j) = \Tr(x_h')$, then 
$$p(2p - 1) + j(j + 4p) = h(2h - 1) + (n - 2),$$ 
hence $(j + 2p)^2 = n + (2p^2 + p - 2) + h(2h - 1)$, a contradiction. If $\Tr(y_j) = \Tr(z_h')$, then 
$$p(2p - 1) + j(j + 4p) = h(2h + 1) + (n - 2),$$ 
hence $(j + 2p)^2 = n + (2p^2 + p - 2) + h(2h + 1)$, a contradiction. If $\Tr(w_j) = \Tr(x_h')$, then 
$$p(2p + 1) + j(j + 4p + 2) = h(2h - 1) + (n - 2),$$ 
hence $(j + 2p + 1)^2 = n + (2p^2 + 3p - 1) + h(2h - 1)$, a contradiction. Finally, if $\Tr(w_j) = \Tr(z_h')$, then 
$$p(2p + 1) + j(j + 4p + 2) = h(2h + 1) + (n - 2),$$ 
hence $(j + 2p + 1)^2 = n + (2p^2 + 3p - 1) + h(2h + 1)$, yielding a contradiction.
\end{proof}

\begin{figure}[t]
\centering
\begin{tikzpicture}[scale=1.0]
\tikzstyle{vertex}=[draw,circle,minimum size=4pt,inner sep=1pt,fill=black]
\tikzstyle{edge}=[draw,thick]
\tikzstyle{dedge}=[draw,thick,dashed]

\node[vertex, label=below:$z_0 \equiv x_0$] (0) at (0, 0) {};
\node[vertex, label=below:$x_1$] (1) at (1, 0) {};
\node[vertex, label=below:$x_2$] (2) at (2, 0) {};
\node[vertex, label=below:$x_p \equiv y_0$] (3) at (3, 0) {};
\node[vertex, label=below:$z_1$] (4) at (-1, 0) {};
\node[vertex, label=below:$z_2$] (5) at (-2, 0) {};
\node[vertex, label=below:$w_0 \equiv z_p$] (6) at (-3, 0) {};
\node[vertex, label=above:$z_0' \equiv x_0'$] (7) at (0, 1) {};
\node[vertex, label=above:$x_1'$] (8) at (1, 1) {};
\node[vertex, label=above:$x_2'$] (9) at (2, 1) {};
\node[vertex, label=above:$x_p'$] (10) at (3, 1) {};
\node[vertex, label=above:$z_1'$] (11) at (-1, 1) {};
\node[vertex, label=above:$z_2'$] (12) at (-2, 1) {};
\node[vertex, label=above:$z_p'$] (13) at (-3, 1) {};

\node[vertex, label=below:$y_1$] (14) at (4, 0) {};
\node[vertex, label=below:$y_2$] (15) at (5, 0) {};
\node[vertex, label=below:$y_\frac{n - 4p - 1}{2}$] (16) at (6, 0) {};
\node[vertex, label=below:$w_1$] (17) at (-4, 0) {};
\node[vertex, label=below:$w_2$] (18) at (-5, 0) {};
\node[vertex, label=below:$w_\frac{n - 4p - 3}{2}$] (19) at (-6, 0) {};

\path[edge] (0) -- (1);
\path[edge] (1) -- (2);
\path[dedge] (2) -- (3);
\path[edge] (0) -- (4);
\path[edge] (4) -- (5);
\path[dedge] (5) -- (6);
\path[edge] (0) -- (7);
\path[edge] (1) -- (8);
\path[edge] (2) -- (9);
\path[edge] (3) -- (10);
\path[edge] (4) -- (11);
\path[edge] (5) -- (12);
\path[edge] (6) -- (13);

\path[edge] (3) -- (14);
\path[edge] (14) -- (15);
\path[dedge] (15) -- (16);
\path[edge] (6) -- (17);
\path[edge] (17) -- (18);
\path[dedge] (18) -- (19);

\end{tikzpicture}
\caption{The saw graph $\mathrm{SW}_{n, p}$ for $p \in \mathbb{N}_0$ and odd $n \ge 4p + 5$.}
\label{odd_saw_fig}
\end{figure}

\begin{figure}[t]
\centering
\begin{tikzpicture}[scale=1.0]
\tikzstyle{vertex}=[draw,circle,minimum size=4pt,inner sep=1pt,fill=black]
\tikzstyle{edge}=[draw,thick]
\tikzstyle{dedge}=[draw,thick,dashed]

\node[vertex, label=above:$z_0 \equiv x_0$] (0) at (0, 0) {};
\node[vertex, label=below:$x_1$] (1) at (1, 0) {};
\node[vertex, label=below:$x_2$] (2) at (2, 0) {};
\node[vertex, label=below:$x_p \equiv y_0$] (3) at (3, 0) {};
\node[vertex, label=below:$z_1$] (4) at (-1, 0) {};
\node[vertex, label=below:$z_2$] (5) at (-2, 0) {};
\node[vertex, label=below:$w_0 \equiv z_p$] (6) at (-3, 0) {};
\node[vertex, label=right:$z_0' \equiv x_0'$] (7) at (0, -1) {};
\node[vertex, label=above:$x_1'$] (8) at (1, 1) {};
\node[vertex, label=above:$x_2'$] (9) at (2, 1) {};
\node[vertex, label=above:$x_p'$] (10) at (3, 1) {};
\node[vertex, label=above:$z_1'$] (11) at (-1, 1) {};
\node[vertex, label=above:$z_2'$] (12) at (-2, 1) {};
\node[vertex, label=above:$z_p'$] (13) at (-3, 1) {};

\node[vertex, label=below:$y_1$] (14) at (4, 0) {};
\node[vertex, label=below:$y_2$] (15) at (5, 0) {};
\node[vertex, label=below:$y_\frac{n - 4p - 2}{2}$] (16) at (6, 0) {};
\node[vertex, label=below:$w_1$] (17) at (-4, 0) {};
\node[vertex, label=below:$w_2$] (18) at (-5, 0) {};
\node[vertex, label=below:$w_\frac{n - 4p - 4}{2}$] (19) at (-6, 0) {};

\node[vertex, label=right:$z_0'' \equiv x_0''$] (20) at (0, -2) {};

\path[edge] (0) -- (1);
\path[edge] (1) -- (2);
\path[dedge] (2) -- (3);
\path[edge] (0) -- (4);
\path[edge] (4) -- (5);
\path[dedge] (5) -- (6);
\path[edge] (0) -- (7);
\path[edge] (1) -- (8);
\path[edge] (2) -- (9);
\path[edge] (3) -- (10);
\path[edge] (4) -- (11);
\path[edge] (5) -- (12);
\path[edge] (6) -- (13);

\path[edge] (3) -- (14);
\path[edge] (14) -- (15);
\path[dedge] (15) -- (16);
\path[edge] (6) -- (17);
\path[edge] (17) -- (18);
\path[dedge] (18) -- (19);

\path[edge] (7) -- (20);

\end{tikzpicture}
\caption{The saw graph $\mathrm{SW}_{n, p}$ for $p \in \mathbb{N}_0$ and even $n \ge 4p + 6$.}
\label{even_saw_fig}
\end{figure}

\begin{proposition}\label{odd_saw_eti}
If $p \in \mathbb{N}_0$, $n \ge \max \{ 4p + 5, 4p^2 - 2p + 5 \}$ is odd, and 
    \begin{equation}\label{eti_odd_cond}
       \mathcal{PS} \cap \left(  2n + \{ 8p^2 + 4p - 5, 8p^2 + 12p - 1 \} + \{ 4j(2j - 1), 4j(2j + 1):\ j \in [p]_0 \}\right) = \emptyset,
    \end{equation}
    then $\mathrm{SW}_{n, p}$ is ETI.
\end{proposition}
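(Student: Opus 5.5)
The plan is to compute every edge transmission of $\mathrm{SW}_{n,p}$ explicitly from the vertex transmissions found in the proof of Proposition~\ref{odd_saw_ti}. I will then split the edges into six families and show the resulting values are pairwise distinct. With the labeling of Fig.~\ref{odd_saw_fig} and $\beta=\Tr(x_0)$, the edges are the spine edges $x_{j-1}x_j$ and $z_{j-1}z_j$ ($j\in[p]$), the pendent edges $x_jx_j'$ ($j\in[p]_0$) and $z_jz_j'$ ($j\in[p]$, since $z_0'=x_0'$), and the path edges $y_{j-1}y_j$ ($j\in[\frac{n-4p-1}{2}]$) and $w_{j-1}w_j$ ($j\in[\frac{n-4p-3}{2}]$). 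Subtracting $2\beta$ from each edge transmission, I expect the shifted values to be:
\begin{itemize}
\item $4j^2-6j+3$ for the edges $x_{j-1}x_j$;
\item $4j^2-2j+1$ for the edges $z_{j-1}z_j$;
\item $2j(2j-1)+n-2$ for the edges $x_jx_j'$, and $2j(2j+1)+n-2$ for the edges $z_jz_j'$;
\item half of $(2j+4p-1)^2-8p^2-4p+1$ for the edges $y_{j-1}y_j$;
\item half of $(2j+4p+1)^2-8p^2-12p-3$ for the edges $w_{j-1}w_j$.
\end{itemize}
The last two come from completing the square after doubling.

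Next I separate the families by size. The spine values are at most $4p^2-2p+1$. Every pendent value is at least $n-2\ge 4p^2-2p+3$, by the hypothesis $n\ge 4p^2-2p+5$. The path values are increasing in $j$ and start above $2p(2p-1)+4p+1>4p^2-2p+1$. Hence spine values collide neither with pendent values nor with path values, and within each family the values are strictly increasing. It then remains to treat the within-level collisions.

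For spine against spine, multiplying by $8$ turns $4j^2-6j+3=4h^2-2h+1$ into $(4j-3)^2=(4h-1)^2$, which is impossible modulo $4$ (or by sign). For pendent against pendent, the equation $2j(2j-1)=2h(2h+1)$ reduces exactly as in the proof of Proposition~\ref{odd_saw_ti} to $(4j-1)^2=(4h+1)^2$, again impossible. For $y$ against $w$, doubling gives two odd squares differing by $(-8p^2-4p+1)-(-8p^2-12p-3)=8p+4$. Any two odd squares differ by a multiple of $8$, so this is a contradiction.

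The step I expect to be the main obstacle is the remaining case of path edges against pendent edges, since size alone does not separate them; this is where hypothesis~\eqref{eti_odd_cond} enters. Doubling a coincidence between a $y$-edge and an $x$- or $z$-pendent edge gives
\[
(2j+4p-1)^2 = 2n+8p^2+4p-5+4h(2h\mp1),
\]
and the analogous coincidence with a $w$-edge gives
\[
(2j+4p+1)^2=2n+8p^2+12p-1+4h(2h\mp1).
\]
In each case the right-hand side lies in the set in \eqref{eti_odd_cond}, contradicting that it contains no perfect square. Here $h\in[p]_0$ covers both the $x_h'$ edges and the $z_h'$ edges. The work in this step is verifying the constants in these square-completions carefully. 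Once it is done, all $m(\mathrm{SW}_{n,p})$ edge transmissions are distinct and $\mathrm{SW}_{n,p}$ is ETI.
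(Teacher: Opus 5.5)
Your proposal is correct and follows the paper's proof essentially step for step. It uses the same six edge families, the same size separation of the spine edges from the pendent and path edges, and the same square completions that reduce path-versus-pendent collisions to hypothesis~\eqref{eti_odd_cond}. The only real deviation is in the $y$-edges versus $w$-edges case, where your observation that two odd squares cannot differ by $8p+4 \equiv 4 \pmod 8$ replaces the paper's gap argument and its separate $j=1$ case; as a minor slip, multiplying the spine equation by $4$, not $8$, already yields $(4j-3)^2=(4h-1)^2$.
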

\begin{proof}
    Let the vertices of $\mathrm{SW}_{n, p}$ be labeled as in Fig.~\ref{odd_saw_fig} and let $\beta \coloneqq \Tr(x_0)$. From the proof of Proposition \ref{odd_saw_ti}, it follows that 
\begin{align*}
\Tr(x_{j - 1} x_j) & = 2 \beta + (4j^2 - 6j + 3),\ j \in [p]\,,\\
\Tr(x_j x_j') & = 2 \beta + 2j(2j - 1) + (n - 2),\ j \in [p]_0\,,\\
\Tr(y_{j - 1} y_j) & = 2 \beta + 2p(2p - 1) + (2j^2 + j(8p - 2) - (4p - 1)),\ j \in [(n - 4p - 1)/2]\,,\\
\Tr(z_{j - 1} z_j) & = 2 \beta + (4j^2 - 2j + 1),\ j \in [p]\,,\\
\Tr(z_j z_j') & = 2 \beta + 2j(2j + 1) + (n - 2),\ j \in [p]_0\,, \text{ and}\\
\Tr(w_{j - 1} w_j) & = 2 \beta + 2p(2p + 1) + (2j^2 + j(8p + 2) - (4p + 1))\,, j \in [(n - 4p - 3)/2]\,.
\end{align*}
First, we prove that the $x_{j - 1} x_j$, $x_j x_j'$, $z_{j - 1} z_j$ and $z_j z_j'$ edges have mutually distinct transmissions. By way of contradiction, suppose that $\Tr(x_{j - 1} x_j) = \Tr(z_{h - 1} z_h)$ for some $j \in [p]$ and $h \in [p]$. Then $4j^2 - 6j + 3 = 4h^2 - 2h + 1$, hence $(4j - 3)^2 = (4h - 1)^2$. If $4h - 1 = 4j - 3$, then $4j - 4h = 2$, which is impossible, and if $4h - 1 = 3 - 4j$, then $j + h = 1$, which is also impossible. Therefore, the $x_{j - 1} x_j$ and $z_{j - 1} z_j$ edges have mutually distinct transmissions. From the proof of Proposition \ref{odd_saw_ti}, the $x_j$ and $z_j$ vertices have mutually distinct transmissions. Since $\Tr(x_j x_j') = 2 \Tr(x_j) + (n - 2)$ and $\Tr(z_j z_j') = 2 \Tr(z_j) + (n - 2)$, it follows that the $x_j x_j'$ and $z_j z_j'$ edges also have mutually distinct transmissions. Note that if $p = 0$, then there are no $x_{j - 1} x_j$ or $z_{j - 1} z_j$ edges. It remains to observe that, if $p \ge 1$, then the transmissions of the $x_{j - 1} x_j$ and $z_{j - 1} z_j$ edges are at most $2 \beta + (4p^2 - 2p + 1)$, while those of the $x_j x_j'$ and $z_j z_j'$ edges are at least $2 \beta + (n - 2)$, so the two ranges are disjoint because $n \ge 4p^2 - 2p + 5$.

    Now, by way of contradiction, suppose that $\Tr(y_{j - 1} y_j) = \Tr(w_{h - 1} w_h)$ for some $j \in [\frac{n - 4p - 1}{2}]$ and $h \in [\frac{n - 4p - 3}{2}]$. Then $2j^2 + j(8p - 2) - (6p - 1) = 2h^2 + h(8p + 2) - (2p + 1)$, hence $(2h + 4p + 1)^2 - (2j + 4p - 1)^2 = 8p + 4$. If $j \ge 2$, then $2j + 4p - 1 \ge 4p + 3$, so any perfect square above $(2j + 4p - 1)^2$ is greater than $(2j + 4p - 1)^2$ by at least $(4p + 4)^2 - (4p + 3)^2 = 8p + 7$, yielding a contradiction. If $j = 1$, then $(2h + 4p + 1)^2 = (4p + 2)^2 + 1$, which is impossible. Therefore, the $y_{j - 1} y_j$ and $w_{j - 1} w_j$ edges have mutually distinct transmissions. Also, provided $p \ge 1$, the transmissions of these edges are at least $2 \beta + 2p(2p - 1) + (4p + 1)$, while those of the $x_{j - 1} x_j$ and $z_{j - 1} z_j$ edges are at most $2 \beta + (4p^2 - 2p + 1)$, so the two ranges are disjoint. Thus, it remains to show that a $y_{j - 1} y_j$ or $w_{j - 1} w_j$ edge does not have the same transmission as a $x_j x_j'$ or $z_j z_j'$ edge.

    Recall that none of the numbers in \eqref{eti_odd_cond} are a perfect square. If $\Tr(y_{j - 1} y_j) = \Tr(x_h x_h')$, then
    \[
        2p(2p - 1) + (2j^2 + j(8p - 2) - (4p - 1)) = 2h(2h - 1) + (n - 2),
    \]
    hence $(2j + 4p - 1)^2 = 2n + (8p^2 + 4p - 5) + 4h(2h - 1)$, yielding a contradiction. If $\Tr(y_{j - 1} y_j) = \Tr(z_h z_h')$, then
    \[
        2p(2p - 1) + (2j^2 + j(8p - 2) - (4p - 1)) = 2h(2h + 1) + (n - 2),
    \]
    hence $(2j + 4p - 1)^2 = 2n + (8p^2 + 4p - 5) + 4h(2h + 1)$, yielding a contradiction. If $\Tr(w_{j - 1} w_j) = \Tr(x_h x_h')$, then
    \[
        2p(2p + 1) + (2j^2 + j(8p + 2) - (4p + 1)) = 2h(2h - 1) + (n - 2),
    \]
    hence $(2j + 4p + 1)^2 = 2n + (8p^2 + 12p - 1) + 4h(2h - 1)$, yielding a contradiction. If $\Tr(w_{j - 1} w_j) = \Tr(z_h z_h')$, then
    \[
        2p(2p + 1) + (2j^2 + j(8p + 2) - (4p + 1)) = 2h(2h + 1) + (n - 2),
    \]
    hence $(2j + 4p + 1)^2 = 2n + (8p^2 + 12p - 1) + 4h(2h + 1)$, yielding a contradiction.
\end{proof}

We now establish analogous sufficient conditions for saw graphs of even order to be TI and ETI. Since the proofs follow the same approach as those of Propositions~\ref{odd_saw_ti} and~\ref{odd_saw_eti}, we omit the details.

\begin{proposition}\label{even_saw_ti}
If $p \in \mathbb{N}_0$, $n \ge 2p(p + 1) + 6$ is even, and
\begin{align*}
& \mathcal{PS} \cap \left( 4n + \{ 8p^2 + 8p - 7, 8p^2 + 16p + 1 \} + \{ -8 \} \cup \{ 8j^2, 8j(j + 1):\ j \in [p]\}\right) = \emptyset, \\
& \mathcal{PS} \cap \left( 8n + \{ 8p^2 + 8p - 23, 8p^2 + 16p - 15 \}\right) = \emptyset,  
\end{align*}
then $\mathrm{SW}_{n, p}$ is TI.
\end{proposition}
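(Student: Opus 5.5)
The plan is to follow the proof of Proposition~\ref{odd_saw_ti}: compute all transmissions explicitly relative to $\beta\coloneqq\Tr(x_0)$, split the vertices into a few families, and rule out every coincidence either by a direct argument or by one of the non-square hypotheses. Label $\mathrm{SW}_{n,p}$ as in Fig.~\ref{even_saw_fig}. For the spine I use Lemma~\ref{equal} along each edge, counting the vertices on each side.

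\textbf{Step 1 (transmission formulas).} Across the edge $x_{j-1}x_j$, the $x_j$-side contains $x_j,\dots,x_p$, their leaves $x_j',\dots,x_p'$, and the $y$-tail $y_1,\dots,y_{(n-4p-2)/2}$. That is $n/2-2j+1$ vertices, so Lemma~\ref{equal} gives $\Tr(x_j)-\Tr(x_{j-1})=4j-2$. The same count for $z_{j-1}z_j$ uses the $w$-tail, which is one vertex shorter, and gives increment $4j$. Summing, I expect
\[
\Tr(x_j)=\beta+2j^2,\qquad \Tr(z_j)=\beta+2j(j+1),\qquad j\in[p]_0 .
\]
For $j\ge1$ the leaves satisfy $\Tr(x_j')=\Tr(x_j)+n-2$ and $\Tr(z_j')=\Tr(z_j)+n-2$. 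On the special pendent path, Lemma~\ref{pend} with $k=2$ should give $\Tr(x_0')=\beta+n-4$ and $\Tr(x_0'')=\beta+2n-6$. Continuing Lemma~\ref{equal} (equivalently Lemma~\ref{pend} rooted at $y_0=x_p$ and at $w_0=z_p$), the tail vertices $\Tr(y_j)$ and $\Tr(w_j)$ come out as $\beta$ plus $2p^2$ (resp.\ $2p(p+1)$) plus an explicit quadratic in $j$. I will double-check these by hand on small cases.

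\textbf{Step 2 (the easy separations).}
\begin{itemize}
\item $x_j$ versus $z_h$: the equation $2j^2=2h(h+1)$ forces $4j^2=(2h+1)^2$, which is impossible by parity.
\item The leaves $x_j',z_j'$ are then automatically distinct from each other, being shifted by the same constant $n-2$.
\item $x_0'$ lies in a distinct parity or size class, and $x_0''$ is larger than all the small values.
\item The hypothesis $n\ge 2p(p+1)+6$ separates the range $[\beta,\beta+2p(p+1)]$ of the spine vertices $x_j,z_j$ from the range, at least $\beta+n-4$, of the leaves.
\item $y_j$ versus $w_h$: after completing squares this reduces to a difference of two consecutive-type squares equal to a small constant. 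Exactly as in Proposition~\ref{odd_saw_ti}, this is ruled out because the gap between consecutive squares beyond the relevant bound already exceeds that constant; the small index $j=1$ is checked separately.
\item The tail values start above $\beta+2p(p+1)$, so they also avoid the spine vertices.
\end{itemize}

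\textbf{Step 3 (tails versus leaves, the main obstacle).} For $\Tr(y_j)=\Tr(x_h')$ or $\Tr(y_j)=\Tr(z_h')$, I multiply the quadratic identity by $4$ and complete the square in $j$. This yields a perfect square equal to an element of
\[
4n+\{8p^2+8p-7,\,8p^2+16p+1\}+\{8h^2,\,8h(h+1)\},
\]
where the two base constants correspond to the $y$-tail and the $w$-tail respectively. The case $h=0$, the leaf $x_0'$, produces the offset $-8$ in place of the $8h^2$ term. This is exactly the first forbidden set in the hypothesis. For $x_0''$, the identity $\Tr(y_j)=\beta+2n-6$, after the same completion of squares, gives a square in $8n+\{8p^2+8p-23,\,8p^2+16p-15\}$; this is the second forbidden set.

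The delicate point is getting every constant exactly right: the tail lengths $(n-4p-2)/2$ and $(n-4p-4)/2$, the shift $n-4$ versus $n-2$ for $x_0'$, and the $2n-6$ for $x_0''$. The whole argument hinges on the completed squares matching the stated sets precisely, so I would verify the final constants numerically (for instance $p=0,1$ and small even $n$) before writing them out.
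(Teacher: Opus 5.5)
Your proposal is correct and follows the approach the paper intends: the paper omits this proof, saying it proceeds as in Propositions~\ref{odd_saw_ti} and~\ref{odd_saw_eti}. Your formulas $\Tr(x_j)=\beta+2j^2$, $\Tr(z_j)=\beta+2j(j+1)$, $\Tr(x_0')=\beta+n-4$, $\Tr(x_0'')=\beta+2n-6$, $\Tr(y_j)=\beta+2p^2+j(j+4p+1)$ and $\Tr(w_j)=\beta+2p(p+1)+j(j+4p+3)$ are right, and completing the square with $2j+4p+1$ and $2j+4p+3$ reproduces both forbidden sets exactly.

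Three small points:
\begin{itemize}
\item In Step~2 you made an algebra slip. The equation $2j^2=2h(h+1)$ gives $(2h+1)^2-(2j)^2=1$, not $4j^2=(2h+1)^2$, so the parity argument does not apply as written. The conclusion still holds, because $h^2<h(h+1)<(h+1)^2$ for $h\ge 1$.
\item Parity separates nothing here, since every transmission has the parity of $\beta$. The leaf $x_0'$ is separated from the other leaves, the spine and $x_0''$ by size. Its coincidences with tail vertices are excluded by the $-8$ offset.
\item In the $y_j$ versus $w_h$ comparison the equation is $(2h+4p+3)^2-(2j+4p+1)^2=8p+8$. The odd-square gap $4(2j+4p+1)+4\ge 16p+16$ already excludes every $j\ge1$, so $j=1$ needs no separate check.
\end{itemize}
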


\begin{proposition}\label{even_saw_eti}
If $p \in \mathbb{N}_0$, $n \ge 4p^2 + 6$ is even, and 
\begin{align*}
& \mathcal{PS} \cap \left( 2n + \{ 8p^2 + 8p - 4, 8p^2 + 16p + 4 \} + \{ -4 \} \cup \{ 8j^2, 8j(j + 1):\ j \in [p]\} \right), \\
& \mathcal{PS} \cap \left( 6n + \{ 8p^2 + 8p - 20, 8p^2 + 16p - 12 \}\right),
\end{align*}
then $\mathrm{SW}_{n, p}$ is ETI.
\end{proposition}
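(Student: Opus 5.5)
Label the vertices of $\mathrm{SW}_{n,p}$ as in Fig.~\ref{even_saw_fig} and set $\beta \coloneqq \Tr(x_0)$. I would follow the proofs of Propositions~\ref{odd_saw_ti} and~\ref{odd_saw_eti} step by step. First I compute all vertex transmissions by repeated use of Lemma~\ref{equal} along the saw, with Lemma~\ref{pend} for the leaves and for the pendent path $x_0x_0'x_0''$. The relevant counts are these: for the edge $x_{j-1}x_j$ the $x_j$-side has $n/2-2j+1$ vertices, and for $z_{j-1}z_j$ the $z_j$-side has $n/2-2j$ vertices. This should give
\[
\Tr(x_j)=\beta+2j^2,\qquad \Tr(z_j)=\beta+2j(j+1),\qquad \Tr(x_j')=\Tr(x_j)+n-2,\qquad \Tr(z_j')=\Tr(z_j)+n-2,
\]
together with $\Tr(x_0')=\beta+n-4$ and $\Tr(x_0'')=\beta+2n-6$. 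Along the two long arms, $\Tr(y_j)$ and $\Tr(w_j)$ are $\beta+2p^2$ (resp.\ $\beta+2p(p+1)$) plus a quadratic in $j$ of the form $j^2+(4p+c)j$ with a small constant $c$.

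Next I form the edge transmissions. They split into six families: the saw edges $x_{j-1}x_j$ and $z_{j-1}z_j$, which are $O(p^2)$ above $2\beta$; the leaf edges $x_jx_j'$ and $z_jz_j'$, which equal $2\Tr(x_j)+n-2$ and $2\Tr(z_j)+n-2$; the edge $x_0x_0'$, which equals $2\beta+n-4$; the edge $x_0'x_0''$, which equals $2\beta+3n-10$; and the two arm families $y_{j-1}y_j$ and $w_{j-1}w_j$. For the easy separations:
\begin{itemize}
\item Within the saw and leaf families, distinctness reduces to $2j^2\neq 2h(h+1)$, which holds by parity of squares, exactly as in Proposition~\ref{odd_saw_eti}. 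Also $x_0x_0'$ differs from the leaf edges, since $n-4<n-2$ and these values are separated.
\item The hypothesis $n\ge 4p^2+6$ separates the saw edges from everything of order $n$.
\item The arm edges exceed the saw edges because the arm quadratics start above $4p^2+O(p)$.
\item The arm families are mutually distinct by the same gap argument as before: two consecutive odd squares beyond $(4p+3)^2$ differ by more than the fixed offset, after doubling and completing the square.
\end{itemize}

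The remaining, and main, step is to compare each arm edge with each edge whose transmission involves $n$. Multiplying the equation by $2$ (or by a suitable constant) and completing the square turns $\Tr(y_{j-1}y_j)$ into $(2j+4p+c)^2$ up to a constant. Equating it with a leaf edge $2\beta+4h^2+n-2$ or $2\beta+4h(h+1)+n-2$ then yields a perfect square of the form
\[
2n+\{8p^2+8p-4,\,8p^2+16p+4\}+\{8j^2,\,8j(j+1)\}.
\]
Equating it with $x_0x_0'$ produces the offset $-4$ in place of $8j^2$ or $8j(j+1)$. Equating it with $x_0'x_0''$, where $n$ enters with coefficient $3$, produces $6n+\{8p^2+8p-20,\,8p^2+16p-12\}$ after the appropriate rescaling. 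Each case therefore contradicts the hypothesis, read as saying that these intersections with $\mathcal{PS}$ are empty.

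I expect the main obstacle to be bookkeeping the exact constants $c$ in the arm transmissions, and the rescaling that makes the $x_0'x_0''$ comparison land precisely on the stated $6n$ offsets. I would first verify these constants on a small instance, say $p=1$ and $n=10$, before writing the general completion of squares. Finally, $x_0'x_0''$ must also be separated from the leaf edges and the saw edges. I would do this by magnitude: $3n-10$ exceeds $n-2+4p(p+1)$ whenever $n\ge 4p^2+6$.
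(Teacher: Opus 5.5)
Your proposal is correct and takes the route the paper intends: the paper omits this proof and says only that it mirrors Propositions~\ref{odd_saw_ti} and~\ref{odd_saw_eti}. Your vertex transmissions check out. The arm edges come to $\Tr(y_{j-1}y_j)=2\beta+4p^2+2j^2+8pj-4p$ and $\Tr(w_{j-1}w_j)=2\beta+4p^2+2j^2+(8p+4)j-2$, so doubling and completing the squares $(2j+4p)^2$ and $(2j+4p+2)^2$ gives exactly the stated $2n$ and $6n$ offsets.

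Two small slips. In the even case the arm-versus-arm comparison reduces to $(2h+4p+2)^2-(2j+4p)^2=8p+8$, which involves even squares, not odd ones; the same gap argument still rules it out. Also, $j^2\neq h(h+1)$ holds because $h(h+1)$ lies strictly between two consecutive squares, not by parity; parity modulo $4$ is what separates the saw edges $4j^2-4j+2$ and $4h^2$.
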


\noindent
The following corollaries are immediate consequences of Propositions~\ref{odd_saw_ti} and~\ref{odd_saw_eti}.

\begin{corollary}\label{odd_saw_0}
If $n \ge 5$ is odd, $\mathcal{PS} \cap  [n - 2, n - 1] = \emptyset$, and there are no odd perfect squares in $[2n - 5, 2n - 1]$, then $\mathrm{SW}_{n, 0}$ is both TI and ETI.
\end{corollary}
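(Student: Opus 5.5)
The plan is to specialize Propositions~\ref{odd_saw_ti} and~\ref{odd_saw_eti} to $p = 0$ and check that their hypotheses reduce to those stated in the corollary.

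First, the size conditions. For $p=0$ we have $\max\{4p+5,\, p(2p+1)+3\} = \max\{5,3\} = 5$ and $\max\{4p+5,\, 4p^2-2p+5\} = 5$. So the assumption that $n \ge 5$ is odd covers both propositions.

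Next, condition \eqref{ti_odd_cond}. With $p=0$ we have $[p]_0 = \{0\}$, so the set $\{j(2j-1), j(2j+1):\ j\in[p]_0\}$ collapses to $\{0\}$. The first summand becomes $\{2p^2+p-2,\, 2p^2+3p-1\} = \{-2,-1\}$. Hence \eqref{ti_odd_cond} reads $\mathcal{PS} \cap \{n-2,\, n-1\} = \emptyset$. This is exactly the hypothesis $\mathcal{PS} \cap [n-2,n-1] = \emptyset$, since that integer interval is $\{n-2,n-1\}$. Proposition~\ref{odd_saw_ti} therefore gives that $\mathrm{SW}_{n,0}$ is TI.

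Similarly, condition \eqref{eti_odd_cond} with $p=0$ becomes $\mathcal{PS} \cap \{2n-5,\, 2n-1\} = \emptyset$, because $\{8p^2+4p-5,\, 8p^2+12p-1\} = \{-5,-1\}$ and the second set is again $\{0\}$. Since $2n-5$ and $2n-1$ are odd, the hypothesis that $[2n-5,2n-1]$ contains no odd perfect square in particular excludes both of them. (The hypothesis is stronger than needed, as it also rules out $2n-3$.) Proposition~\ref{odd_saw_eti} then gives that $\mathrm{SW}_{n,0}$ is ETI.

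There is no real obstacle. The only point needing care is the degenerate case $p=0$ in the proofs of the two propositions, where the $x_{j-1}x_j$ and $z_{j-1}z_j$ edges do not exist and $x_0 \equiv z_0$. The proof of Proposition~\ref{odd_saw_eti} already treats this case explicitly, and the vertex-transmission argument of Proposition~\ref{odd_saw_ti} goes through unchanged when the index ranges $[p]$ are empty.
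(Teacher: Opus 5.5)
Your proposal is correct and follows the paper's own route. The paper also states the corollary as an immediate consequence of Propositions~\ref{odd_saw_ti} and~\ref{odd_saw_eti} at $p=0$, where the order bounds collapse to $n \ge 5$ and the excluded sets become $\{n-2,n-1\}$ and $\{2n-5,2n-1\}$, exactly as you computed (your remark that the hypothesis also needlessly excludes $2n-3$ is accurate).
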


\begin{corollary}\label{odd_saw_1}
If $n \ge 9$ is odd, $\mathcal{PS} \cap [n + 1, n + 7] = \emptyset$,  and there are no odd perfect squares in $[2n + 7, 2n + 31]$, then $\mathrm{SW}_{n, 1}$ is both TI and ETI.
\end{corollary}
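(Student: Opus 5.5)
The plan is to specialize Propositions~\ref{odd_saw_ti} and~\ref{odd_saw_eti} to $p = 1$ and check that the hypotheses collapse to the ones stated. For $p = 1$ the size requirements are $n \ge \max\{9, 4\}$ in Proposition~\ref{odd_saw_ti} and $n \ge \max\{9, 7\}$ in Proposition~\ref{odd_saw_eti}. Both reduce to $n \ge 9$.

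\textbf{TI part.} With $p = 1$ we have $2p^2 + p - 2 = 1$ and $2p^2 + 3p - 1 = 4$. The set $\{j(2j-1), j(2j+1) : j \in [1]_0\}$ equals $\{0, 1, 3\}$. So the numbers in \eqref{ti_odd_cond} are
\[
n + \{1, 4\} + \{0, 1, 3\} = n + \{1, 2, 4, 5, 7\}.
\]
Every one of these lies in $[n+1, n+7]$. The hypothesis $\mathcal{PS} \cap [n+1, n+7] = \emptyset$ therefore gives \eqref{ti_odd_cond}, and Proposition~\ref{odd_saw_ti} yields that $\mathrm{SW}_{n,1}$ is TI.

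\textbf{ETI part.} With $p = 1$ we have $8p^2 + 4p - 5 = 7$ and $8p^2 + 12p - 1 = 19$. The set $\{4j(2j-1), 4j(2j+1) : j \in [1]_0\}$ equals $\{0, 4, 12\}$. So the numbers in \eqref{eti_odd_cond} are
\[
2n + \{7, 11, 19, 23, 31\}.
\]
Since $n$ is odd, each of these is odd, and each lies in $[2n+7, 2n+31]$. The assumption that this interval contains no odd perfect square gives \eqref{eti_odd_cond}, and Proposition~\ref{odd_saw_eti} yields that $\mathrm{SW}_{n,1}$ is ETI.

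The only step needing care is the arithmetic of the offset sets, which is routine. There is no genuine obstacle.
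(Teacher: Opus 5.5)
Your proposal is correct and follows the paper's route exactly: the paper states this corollary as an immediate consequence of Propositions~\ref{odd_saw_ti} and~\ref{odd_saw_eti} at $p=1$, and your offset sets $n+\{1,2,4,5,7\}$ and $2n+\{7,11,19,23,31\}$ are right. There are two harmless slips: at $p=1$ one has $p(2p+1)+3=6$ (not $4$), and the numbers $2n+\{7,11,19,23,31\}$ are odd because $2n$ is even and the offsets are odd, so the parity of $n$ plays no role there.
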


\begin{corollary}\label{odd_saw_2}
If $n \ge 17$ is odd, $\mathcal{PS} \cap [n + 8, n + 23] = \emptyset$, and there are no odd perfect squares in $[2n + 35, 2n + 95]$, then $\mathrm{SW}_{n, 2}$ is both TI and ETI.
\end{corollary}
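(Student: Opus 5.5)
We specialize Propositions~\ref{odd_saw_ti} and~\ref{odd_saw_eti} to $p = 2$. We check three things: that the order hypotheses hold, that the TI forbidden set is covered by $[n+8, n+23]$, and that the ETI forbidden set consists of odd numbers in $[2n+35, 2n+95]$.

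\textbf{Order hypotheses.} For $p = 2$ we need $4p + 5 = 13$ and $p(2p+1) + 3 = 13$ for Proposition~\ref{odd_saw_ti}. Proposition~\ref{odd_saw_eti} needs $4p^2 - 2p + 5 = 17$. So $n \ge 17$ odd satisfies both.

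\textbf{TI condition.} Take $j \in [2]_0$. Then $j(2j-1)$ ranges over $\{0, 1, 6\}$ and $j(2j+1)$ over $\{0, 3, 10\}$, so the union is $\{0, 1, 3, 6, 10\}$. The two shifts are $2p^2 + p - 2 = 8$ and $2p^2 + 3p - 1 = 13$. The set in \eqref{ti_odd_cond} is therefore
\[
n + \{8, 9, 11, 14, 18, 13, 14, 16, 19, 23\}.
\]
Every element lies in $[n+8, n+23]$. So the hypothesis $\mathcal{PS} \cap [n+8, n+23] = \emptyset$ gives \eqref{ti_odd_cond}, and Proposition~\ref{odd_saw_ti} yields that $\mathrm{SW}_{n,2}$ is TI.

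\textbf{ETI condition.} The shifts in \eqref{eti_odd_cond} are $8p^2 + 4p - 5 = 35$ and $8p^2 + 12p - 1 = 55$. The added set is $\{4j(2j-1), 4j(2j+1)\} = \{0, 4, 12, 24, 40\}$. Hence the set in \eqref{eti_odd_cond} is
\[
2n + \{35, 39, 47, 59, 75, 55, 59, 67, 79, 95\}.
\]
Each shift is odd, so every element is odd, and every element lies in $[2n+35, 2n+95]$. The hypothesis that there is no odd perfect square in this interval therefore gives \eqref{eti_odd_cond}, and Proposition~\ref{odd_saw_eti} yields that $\mathrm{SW}_{n,2}$ is ETI.

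There is no real obstacle here; the only step needing care is computing the two small finite sets correctly.
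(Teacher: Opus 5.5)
Your proposal is correct and takes the same route as the paper, which states this corollary as an immediate consequence of Propositions~\ref{odd_saw_ti} and~\ref{odd_saw_eti} specialized to $p = 2$. Your computations all check out: the order thresholds are $13$ and $17$, the TI forbidden set lies inside $[n+8, n+23]$, and the ETI forbidden set consists of odd numbers inside $[2n+35, 2n+95]$.
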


\noindent
The following corollaries are obtained from Propositions~\ref{even_saw_ti} and~\ref{even_saw_eti}.

\begin{corollary}\label{even_saw_0}
    Let $n \ge 6$ be even and assume that there are no even perfect squares in $[2n - 8, \linebreak 2n]$, no odd perfect squares in $[4n - 15, 4n - 7]$, no even perfect squares in $[6n - 20, 6n - 12]$, and no odd perfect squares in $[8n - 23, 8n - 15]$. Then $\mathrm{SW}_{n, 0}$ is both TI and ETI.
\end{corollary}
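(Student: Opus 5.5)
We specialize Propositions~\ref{even_saw_ti} and~\ref{even_saw_eti} to $p = 0$. There the index sets $\{8j^2, 8j(j+1):\ j \in [p]\}$ are empty. So the first condition of Proposition~\ref{even_saw_ti} only asks that none of $4n - 15$, $4n + 1$, $4n - 7$ is a perfect square. Its second condition only asks that none of $8n - 23$, $8n - 15$ is a perfect square. Likewise, Proposition~\ref{even_saw_eti} (read as requiring the displayed intersections to be empty) asks that none of $2n - 8$, $2n + 4$, $2n - 4$, $2n$ and none of $6n - 20$, $6n - 12$ is a perfect square. The order bounds become $n \ge 6$ in both propositions, matching the hypothesis, and $4p+6 = 6$ guarantees that $\mathrm{SW}_{n,0}$ is defined.

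We then check that the hypotheses of the corollary exclude every one of these finitely many numbers.

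\textbf{TI part.} The numbers $4n - 15$ and $4n - 7$ are odd and lie in $[4n - 15, 4n - 7]$, so they are excluded. The number $4n + 1$ lies outside this interval, so we handle it by parity. Since $n$ is even, $4n + 1 \equiv 1 \pmod 8$, so congruences alone do not exclude it. We therefore re-derive where this term comes from. If the ``$+\{-8\}$'' shift in Proposition~\ref{even_saw_ti} applies to both base values, then at $p=0$ the relevant numbers are exactly $4n - 15$ and $4n - 7$, and no $4n+1$ term arises. We expect this reading to be correct, because for $p=0$ the only vertex pairs to compare are those coming from the $x_0''$ pendent path. The numbers $8n - 23$ and $8n - 15$ are odd and lie in $[8n - 23, 8n - 15]$, so they are excluded.

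\textbf{ETI part.} The numbers $2n - 8$, $2n - 4$ and $2n$ are even and lie in $[2n - 8, 2n]$, so they are excluded. Under the same reading of the $\{-4\}$ shift as above, the term $2n + 4$ does not occur. The numbers $6n - 20$ and $6n - 12$ are even and lie in $[6n - 20, 6n - 12]$, so they are excluded.

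\textbf{Main obstacle.} The delicate step is exactly the bookkeeping just described: confirming which shifted values genuinely occur at $p = 0$, since Propositions~\ref{even_saw_ti} and~\ref{even_saw_eti} were stated without proof. To settle this, I would recompute the transmissions directly for $\mathrm{SW}_{n,0}$ using Lemmas~\ref{equal} and~\ref{pend}. This graph is a path with a pendent $P_2$ attached at a central vertex $x_0$. The computation gives $\Tr(x_0'') = \beta + 2(n - 3) + \dots$, together with the $y_j, w_j$ formulas. I would then redo the few equalities $\Tr(y_j) = \Tr(x_0')$, $\Tr(y_j) = \Tr(x_0'')$, and the analogous edge equalities. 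Completing the square in each should yield precisely the listed numbers, each lying in one of the four stated intervals with the stated parity. This also accounts for why the corollary is phrased with parity-restricted intervals.
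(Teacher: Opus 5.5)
Your proposal is correct and is essentially the paper's argument, which simply specializes Propositions~\ref{even_saw_ti} and~\ref{even_saw_eti} to $p=0$. The reading you settle on is the intended one: since $[0]=\emptyset$, the shift set is just $\{-8\}$ (resp.\ $\{-4\}$), so $4n+\{-7,1\}+\{-8\}=\{4n-15,4n-7\}$ and $2n+\{-4,4\}+\{-4\}=\{2n-8,2n\}$, and the values $4n+1$, $2n+4$ (and $2n-4$) from your first parse never arise.

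Your fallback direct check also goes through. $\mathrm{SW}_{n,0}$ is the starlike tree with arms of lengths $2$, $\frac{n-4}{2}$, $\frac{n-2}{2}$, and its vertex transmissions are $\Tr(x_0')=\beta+n-4$, $\Tr(x_0'')=\beta+2n-6$, $\Tr(y_j)=\beta+j(j+1)$, $\Tr(w_j)=\beta+j(j+3)$. The corresponding edge transmissions are $2\beta+n-4$, $2\beta+3n-10$, $2\beta+2j^2$ and $2\beta+2(j+1)^2-4$. Comparing these produces exactly the numbers $4n-15$, $4n-7$, $8n-23$, $8n-15$ for the TI part and $2n-8$, $2n$, $6n-20$, $6n-12$ for the ETI part.
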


\begin{corollary}\label{even_saw_1}
    Let even $n \ge 10$ and assume that there are no even perfect squares in $[2n + 8, 2n + 44]$, no odd perfect squares in $[4n + 1, 4n + 41]$, no even perfect squares in $[6n - 4, 6n + 12]$, and no odd perfect squares in $[8n - 7, 8n + 9]$. Then $\mathrm{SW}_{n, 1}$ is both TI and ETI.
\end{corollary}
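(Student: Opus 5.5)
The plan is to specialize Propositions~\ref{even_saw_ti} and~\ref{even_saw_eti} to $p = 1$. We then check that the interval hypotheses of the corollary imply the perfect-square avoidance conditions of both propositions. (In Proposition~\ref{even_saw_eti} the two displayed intersections are to be read as required to be empty, as in Proposition~\ref{even_saw_ti}.) Nothing structural needs to be reproved; the work is bookkeeping of the finite exceptional sets.

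First, the order constraints. For $p = 1$, the saw graph $\mathrm{SW}_{n,1}$ is defined for even $n \ge 4p + 6 = 10$. Proposition~\ref{even_saw_ti} requires $n \ge 2p(p+1) + 6 = 10$, and Proposition~\ref{even_saw_eti} requires $n \ge 4p^2 + 6 = 10$. So the hypothesis that $n \ge 10$ is even covers all three.

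Next, the TI conditions at $p = 1$.
\begin{itemize}
\item The base offsets are $8p^2 + 8p - 7 = 9$ and $8p^2 + 16p + 1 = 25$, and the added set is $\{-8\} \cup \{8j^2, 8j(j+1) : j \in [1]\} = \{-8, 8, 16\}$. The resulting numbers are $4n + \{1, 17, 25, 33, 41\}$, which are odd and lie in $[4n+1, 4n+41]$. Hence the assumption that this interval contains no odd perfect square excludes all of them.
\item The second family is $8n + \{8 + 8 - 23,\ 8 + 16 - 15\} = \{8n - 7, 8n + 9\}$. Both are odd endpoints of $[8n-7, 8n+9]$, so they are excluded as well.
\end{itemize}
By Proposition~\ref{even_saw_ti}, $\mathrm{SW}_{n,1}$ is TI.

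Then the ETI conditions at $p = 1$.
\begin{itemize}
\item The base offsets are $8 + 8 - 4 = 12$ and $8 + 16 + 4 = 28$, with the same added set $\{-4, 8, 16\}$. The resulting numbers are $2n + \{8, 20, 24, 28, 36, 44\}$, all even and contained in $[2n+8, 2n+44]$. They are therefore not perfect squares by hypothesis.
\item The second family is $6n + \{8 + 8 - 20,\ 8 + 16 - 12\} = \{6n - 4, 6n + 12\}$. These are even members of $[6n-4, 6n+12]$, so they are excluded.
\end{itemize}
By Proposition~\ref{even_saw_eti}, $\mathrm{SW}_{n,1}$ is ETI.

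There is no real obstacle beyond the arithmetic. The only point needing care is that each generated number has the stated parity, so that the "no odd (resp.\ even) perfect squares" hypotheses apply. This holds because $n$ is even and all offsets in each family share a parity. The interval hypotheses are stronger than strictly necessary, so no sharpening is needed.
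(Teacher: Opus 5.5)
Your proposal is correct and takes the same route as the paper, which obtains this corollary directly by setting $p = 1$ in Propositions~\ref{even_saw_ti} and~\ref{even_saw_eti}. Your exceptional sets $4n + \{1, 17, 25, 33, 41\}$, $8n + \{-7, 9\}$, $2n + \{8, 20, 24, 28, 36, 44\}$ and $6n + \{-4, 12\}$ are exactly right, lie in the stated intervals with the stated parities, and your reading of the missing ``$= \emptyset$'' in Proposition~\ref{even_saw_eti} is the intended one.
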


\begin{corollary}\label{even_saw_3}
    Let $n \ge 42$ be even and assume that there are no even perfect squares in $[2n + 88, 2n + 220]$, no odd perfect squares in $[4n + 81, 4n + 217]$, no even perfect squares in $[6n + 76, 6n + 108]$, and no odd perfect squares in $[8n + 73, 8n + 105]$. Then $\mathrm{SW}_{n, 3}$ is both TI and ETI.
\end{corollary}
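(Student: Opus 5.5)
The plan is to specialize Propositions~\ref{even_saw_ti} and~\ref{even_saw_eti} to $p = 3$. I will then check that the four hypotheses of the corollary imply all of their perfect-square avoidance conditions. Since the corollary only asserts that certain finite sets avoid $\mathcal{PS}$, and each such set will be contained in one of the stated intervals with all its elements of a fixed parity, this is pure bookkeeping.

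First, the order conditions. For $p = 3$ we have $2p(p+1)+6 = 30$ and $4p^2+6 = 42$. Hence $n \ge 42$ even satisfies the order requirements of both propositions.

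Next, the TI conditions of Proposition~\ref{even_saw_ti}. With $p = 3$ the two base constants are $8p^2+8p-7 = 89$ and $8p^2+16p+1 = 121$. The shift set is
\[
S = \{-8\} \cup \{8j^2, 8j(j+1):\ j \in [3]\} = \{-8, 8, 16, 32, 48, 72, 96\}.
\]
Thus every element of $4n + \{89, 121\} + S$ is odd, because $4n$ is even and $89, 121$ are odd. These elements range from $4n + 81$ to $4n + 217$, so the hypothesis of no odd perfect squares in $[4n+81, 4n+217]$ gives the first condition. The second condition concerns $8n + \{8p^2+8p-23, 8p^2+16p-15\} = 8n + \{73, 105\}$. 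Both numbers are odd and lie in $[8n+73, 8n+105]$, so they are covered by the last hypothesis.

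Then the ETI conditions of Proposition~\ref{even_saw_eti}, read with the intended ``$=\emptyset$''. The base constants are $8p^2+8p-4 = 92$ and $8p^2+16p+4 = 124$. The shift set is now
\[
S' = \{-4\} \cup \{8, 16, 32, 48, 72, 96\}.
\]
So every element of $2n + \{92, 124\} + S'$ is even, with minimum $2n+88$ and maximum $2n+220$. Hence the hypothesis of no even perfect squares in $[2n+88, 2n+220]$ suffices. Finally, $6n + \{8p^2+8p-20, 8p^2+16p-12\} = 6n + \{76, 108\}$ consists of even numbers in $[6n+76, 6n+108]$, which is exactly the remaining hypothesis.

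Combining the two propositions then shows that $\mathrm{SW}_{n,3}$ is both TI and ETI. The only point needing care is arithmetic accuracy. In particular, one must distinguish the $-8$ shift of the TI proposition from the $-4$ shift of the ETI proposition, since this is what fixes the lower endpoints $4n+81$ and $2n+88$. One must also confirm the parities so that restricting to odd (respectively even) squares is legitimate.
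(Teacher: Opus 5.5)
Your proposal is correct and matches the paper's approach: the paper states this corollary as an immediate consequence of Propositions~\ref{even_saw_ti} and~\ref{even_saw_eti}, i.e., specialize them to $p = 3$ and check that each finite set in their hypotheses is of a single parity and lies inside the corresponding interval. Your arithmetic is accurate throughout: the thresholds $30$ and $42$, the constants $89, 121, 73, 105, 92, 124, 76, 108$, the shift sets, and the resulting interval endpoints all check out.
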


\begin{corollary}\label{even_saw_5}
    Let $n \ge 106$ be even and assume that there are no even perfect squares in $[2n + 232, 2n + 524]$, no odd perfect squares in $[4n + 225, 4n + 521]$, no even perfect squares in $[6n + 220, 6n + 268]$, and no odd perfect squares in $[8n + 217, 8n + 265]$. Then $\mathrm{SW}_{n, 5}$ is both TI and ETI.
\end{corollary}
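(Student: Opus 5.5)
The plan is to specialize Propositions~\ref{even_saw_ti} and~\ref{even_saw_eti} to $p = 5$. In each case I will check that the finite set of numbers which the proposition requires to avoid $\mathcal{PS}$ lies inside one of the intervals named in the corollary and has the parity stated there. The hypothesis of the corollary then rules out every perfect square in that set.

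First, the order constraints. Proposition~\ref{even_saw_ti} needs $n \ge 2p(p+1)+6 = 66$. Proposition~\ref{even_saw_eti} needs $n \ge 4p^2+6 = 106$. Both hold since $n \ge 106$ is even. The common auxiliary set is
\[
\{-8\} \cup \{8j^2, 8j(j+1) : j \in [5]\} = \{-8, 8, 16, 32, 48, 72, 96, 128, 160, 200, 240\}.
\]
Its minimum is $-8$, its maximum is $240$, and all its elements are even (indeed divisible by $8$). The ETI condition uses $\{-4\}$ in place of $\{-8\}$; I read it as this same set with $-8$ replaced by $-4$. I also read the conditions of Proposition~\ref{even_saw_eti} as "$=\emptyset$", as clearly intended.

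For the TI part, the shifts are $8p^2+8p-7 = 233$ and $8p^2+16p+1 = 281$. The first family of Proposition~\ref{even_saw_ti} therefore consists of the numbers $4n + 233 + s$ and $4n + 281 + s$, with $s$ in the auxiliary set. All of them are odd, because $4n$ and $s$ are even and $233, 281$ are odd. They lie in $[4n+225, 4n+521]$, and no odd perfect square lies there by hypothesis. The second family is $8n + \{8p^2+8p-23, 8p^2+16p-15\} = 8n + \{217, 265\}$. These two numbers are odd and lie in $[8n+217, 8n+265]$. Hence Proposition~\ref{even_saw_ti} yields that $\mathrm{SW}_{n,5}$ is TI.

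For the ETI part, the shifts are $8p^2+8p-4 = 236$ and $8p^2+16p+4 = 284$. The first family is $2n + \{236, 284\} + s$, with $s$ ranging over $\{-4\} \cup \{8j^2, 8j(j+1) : j \in [5]\}$. These numbers are even and lie in $[2n+232, 2n+524]$. The second family is $6n + \{8p^2+8p-20, 8p^2+16p-12\} = 6n + \{220, 268\}$. These are even and lie in $[6n+220, 6n+268]$. The hypotheses exclude even perfect squares from both intervals, so Proposition~\ref{even_saw_eti} gives that $\mathrm{SW}_{n,5}$ is ETI.

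There is no real obstacle here. The only care needed is arithmetic bookkeeping: confirming that each extreme element (for instance $4n + 281 + 240 = 4n + 521$ and $2n + 236 - 4 = 2n + 232$) coincides with an interval endpoint, and confirming the parity claims. The parity is exactly what allows the hypotheses to restrict attention to odd or even squares only.
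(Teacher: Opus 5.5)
Your proposal is correct and follows the paper's route exactly: the paper presents this corollary as a direct specialization of Propositions~\ref{even_saw_ti} and~\ref{even_saw_eti} to $p=5$. Your bookkeeping checks out: the order bounds $66$ and $106$, the auxiliary set $\{-8,8,16,32,48,72,96,128,160,200,240\}$, the extreme values $4n+225$, $4n+521$, $2n+232$, $2n+524$, $8n+\{217,265\}$ and $6n+\{220,268\}$, and the parity claims are all right. Your readings of the missing ``$=\emptyset$'' and of the auxiliary set as added to both shifts are the intended ones, as the interval endpoints confirm.
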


\begin{corollary}\label{even_saw_8}
    Let $n \ge 262$ be even and assume that there are no even perfect squares in $[2n + 568, 2n + 1220]$, no odd perfect squares in $[4n + 561, 4n + 1217]$, no even perfect squares in $[6n + 556, 6n + 628]$, and no odd perfect squares in $[8n + 553, 8n + 625]$. Then $\mathrm{SW}_{n, 8}$ is both TI and ETI.
\end{corollary}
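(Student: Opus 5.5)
The plan is to specialize Propositions~\ref{even_saw_ti} and~\ref{even_saw_eti} to $p = 8$. We show that every hypothesis of those two propositions follows from the assumptions of the corollary. The condition in Proposition~\ref{even_saw_eti} is read, as intended, as requiring the displayed intersections to be empty. No new graph-theoretic argument is needed; the work is bookkeeping of the finite offset sets.

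First, the order bounds. Proposition~\ref{even_saw_ti} requires $n \ge 2p(p+1)+6 = 150$. Proposition~\ref{even_saw_eti} requires $n \ge 4p^2+6 = 262$. The saw graph $\mathrm{SW}_{n,8}$ itself needs $n \ge 4p+6 = 38$. All three follow from $n \ge 262$.

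Next, the offset sets with $p = 8$, where $8p^2+8p = 576$ and $8p^2+16p = 640$. The set $\{-8\} \cup \{8j^2, 8j(j+1) :\ j \in [8]\}$ consists of even integers lying between $-8$ and $8\cdot 8\cdot 9 = 576$.
\begin{itemize}
\item For TI, the first condition concerns $4n + \{569, 641\}$ plus this set. Every such number is odd and lies in $[4n+561,\, 4n+1217]$. So the assumption that this interval has no odd perfect squares suffices.
\item The second TI condition concerns $8n + \{553, 625\}$. Both numbers are odd and lie in $[8n+553,\, 8n+625]$, which is covered by the corresponding assumption.
\item For ETI, the first condition concerns $2n + \{572, 644\}$ plus the same even set. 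Every such number is even and lies in $[2n+568,\, 2n+1220]$.
\item The second ETI condition concerns $6n + \{556, 628\}$. Both numbers are even and lie in $[6n+556,\, 6n+628]$.
\end{itemize}
In each case the relevant numbers are of the stated parity and fall inside the stated interval. Hence the absence of perfect squares of that parity in the interval implies that the required intersection with $\mathcal{PS}$ is empty.

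Having verified all hypotheses, Proposition~\ref{even_saw_ti} gives that $\mathrm{SW}_{n,8}$ is TI and Proposition~\ref{even_saw_eti} gives that it is ETI. The only step requiring care is computing the extreme offsets and confirming the parities. In particular, the largest offset is $8j(j+1)$ at $j=8$, which equals $576$, and the smallest is $-8$. There is no genuine obstacle, since the corollary is deliberately coarser than the propositions: it replaces a sparse finite set by a full interval restricted to one parity.
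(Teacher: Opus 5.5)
Your approach is the same as the paper's, which obtains this corollary directly from Propositions~\ref{even_saw_ti} and~\ref{even_saw_eti} at $p=8$. Your order bounds, both TI intervals and the second ETI interval are correct.

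One slip needs fixing. The offset set in the first ETI condition is $\{-4\}\cup\{8j^2,8j(j+1):\ j\in[8]\}$, not ``the same even set'' containing $-8$. The $-4$ comes from the edge $x_0x_0'$, just as $-8$ in the TI condition comes from the vertex $x_0'$. This is exactly why the ETI interval starts at $2n+572-4=2n+568$. Under your stated reading, the number $2n+564$ would also have to be a non-square, and the hypothesis on $[2n+568,\,2n+1220]$ does not cover it.
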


As the next step, we show that the saw graphs yield a subcubic tree of order $n \in \mathbb{N}$ that is both TI and ETI, provided $n$ is sufficiently large. This is done separately for the odd and even orders, as follows.

\begin{lemma}\label{odd_big_lemma}
    If $n \ge 317$ is odd, then at least one of $\mathrm{SW}_{n, 0}$, $\mathrm{SW}_{n, 1}$ and $\mathrm{SW}_{n, 2}$ is both TI and ETI.
\end{lemma}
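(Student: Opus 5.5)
The plan is a pigeonhole argument over the three sufficient conditions in Corollaries~\ref{odd_saw_0}, \ref{odd_saw_1} and~\ref{odd_saw_2}. Fix an odd $n \ge 317$; then $n \ge 17$, so the order hypotheses of all three corollaries hold. Each corollary then asks for two things.

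\emph{Square windows.} There must be no perfect square in one of the windows
\[
I_0 = [n-2, n-1], \quad I_1 = [n+1, n+7], \quad I_2 = [n+8, n+23].
\]
These are pairwise disjoint and all lie in $[n-2, n+23]$.

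\emph{Odd-square windows.} There must be no odd perfect square in one of the windows
\[
J_0 = [2n-5, 2n-1], \quad J_1 = [2n+7, 2n+31], \quad J_2 = [2n+35, 2n+95].
\]
These are also pairwise disjoint, and all lie in $[2n-5, 2n+95]$.

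Say that corollary $i$ \emph{fails} if $I_i$ contains a perfect square or $J_i$ contains an odd perfect square. The goal is to show that at most one of the $I_i$ contains a square and at most one of the $J_i$ contains an odd square. Then at most two of the three indices $i \in \{0,1,2\}$ fail, so some corollary applies and the corresponding $\mathrm{SW}_{n,i}$ is both TI and ETI.

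\emph{Step 1: at most one square in $[n-2, n+23]$.} Suppose two perfect squares $k^2 < (k+1)^2$ both lie in this interval. Then $k^2 \ge n - 2 \ge 315$, so $k \ge 18$. Their difference is $2k + 1 \ge 37$, but the interval has length only $25$, a contradiction. Hence at most one window $I_i$ contains a square.

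\emph{Step 2: at most one odd square in $[2n-5, 2n+95]$.} Suppose two consecutive odd squares $(2k-1)^2 < (2k+1)^2$ both lie in this interval. Then $(2k-1)^2 \ge 2n - 5 \ge 629 > 625 = 25^2$, so $2k - 1 \ge 27$. Their difference is $8k \ge 112$, which exceeds the interval length $100$. Hence at most one window $J_i$ contains an odd square.

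Combining Steps~1 and~2 finishes the argument by the pigeonhole count above. The only delicate point is Step~2, where the spacing of odd squares must beat the window length of about $100$. This is precisely where the threshold $n \ge 317$ is needed, since it forces the odd squares near $2n$ to be at least $27^2$. I would double-check the boundary values there, namely the smallest admissible odd square and the inclusive endpoints of the intervals. A square falling in a gap between windows, such as in $(2n-1, 2n+7)$ or $(n-1, n+1)$, is harmless, so the argument does not need those gaps to be square-free.
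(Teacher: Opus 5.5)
Your proof is correct and follows the paper's argument: the same pigeonhole over Corollaries~\ref{odd_saw_0}--\ref{odd_saw_2}, reduced to showing that $[n-2,n+23]$ contains at most one perfect square and $[2n-5,2n+95]$ contains at most one odd perfect square. Your gap estimates ($2k+1\ge 37>25$ and $8k\ge 112>100$) are just the lower-bound counterparts of the paper's $n+23<14^2$ and $2n+95<27^2$.
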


\begin{proof}
    By way of contradiction, suppose that none of $\mathrm{SW}_{n, 0}$, $\mathrm{SW}_{n, 1}$ and $\mathrm{SW}_{n, 2}$ are both TI and ETI. Then, by Corollaries \ref{odd_saw_0}--\ref{odd_saw_2}, there are at least two perfect squares in $[n - 2, n + 23]$, or at least two odd perfect squares in $[2n - 5, 2n + 95]$. In the former case, the difference between the two perfect squares is at most $25$, hence $n + 23 < 14^2$, which gives $n < 173$, yielding a contradiction. In the latter case, the difference between the two odd perfect squares is at most $100$, hence $2n + 95 < 27^2$, which gives $n < 317$, yielding a contradiction.
\end{proof}

\begin{lemma}\label{even_big_lemma}
    If $n \ge 47440$ is even, then at least one of $\mathrm{SW}_{n, 0}$, $\mathrm{SW}_{n, 1}$, $\mathrm{SW}_{n, 3}$, $\mathrm{SW}_{n, 5}$ and $\mathrm{SW}_{n, 8}$ is both TI and ETI.
\end{lemma}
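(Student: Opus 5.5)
The plan is to follow the proof of Lemma~\ref{odd_big_lemma}: argue by contradiction and combine a pigeonhole step with the gaps between consecutive perfect squares of fixed parity. Suppose that for some even $n \ge 47440$ none of $\mathrm{SW}_{n,0}$, $\mathrm{SW}_{n,1}$, $\mathrm{SW}_{n,3}$, $\mathrm{SW}_{n,5}$, $\mathrm{SW}_{n,8}$ is both TI and ETI. The lower bounds on $n$ in Corollaries~\ref{even_saw_0}--\ref{even_saw_8} are all at most $262$, so they are satisfied. Hence each of these five graphs violates at least one of the four square-free conditions of its corollary. Each corollary has exactly four conditions, one of each of the following types:
\begin{itemize}
\item an even perfect square near $2n$;
\item an odd perfect square near $4n$;
\item an even perfect square near $6n$;
\item an odd perfect square near $8n$.
\end{itemize}
With five graphs and four types, the pigeonhole principle gives two distinct values of $p$ that fail the same type of condition.

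Next I would check that, within each type, the five intervals for $p\in\{0,1,3,5,8\}$ are pairwise disjoint. For the $2n$ type these are
\[
[2n-8,2n],\ [2n+8,2n+44],\ [2n+88,2n+220],\ [2n+232,2n+524],\ [2n+568,2n+1220],
\]
and the other three types are handled by inspecting their listed intervals in the same way. Disjointness means the two failures produce two \emph{distinct} perfect squares of the prescribed parity. Both lie in the hull of that type's intervals, which is one of
\[
[2n-8,\,2n+1220],\quad [4n-15,\,4n+1217],\quad [6n-20,\,6n+628],\quad [8n-23,\,8n+625].
\]
So in the worst case the two squares differ by at most $1232$; for the $6n$ and $8n$ types the bound is at most $648$.

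Now use the gap argument. If $a^2<b^2$ are squares of the same parity, then $b\ge a+2$, so $b^2-a^2\ge 4a+4$. Write $a^2$ for the smaller square in each type.
\begin{itemize}
\item \emph{Type $2n$.} Here $a^2\ge 2n-8\ge 94872>308^2$, so the even number $a$ is at least $310$. Then $b^2-a^2\ge 1244>1228$, a contradiction.
\item \emph{Type $4n$.} Here $a^2\ge 4n-15>189000$, so $a\ge 435$. The gap is then far larger than $1232$.
\item \emph{Types $6n$ and $8n$.} The lower endpoints exceed $280000$, so $a>500$. The gap exceeds $2000>648$.
\end{itemize}
Every case therefore yields a contradiction, which proves the lemma.

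The main obstacle is bookkeeping rather than an idea. One must confirm that the intervals inside each type are genuinely disjoint, including the odd/even matching between the corollaries. One must also verify that the $2n$ type is the binding one, and that the threshold $47440$ is exactly what makes $2n-8$ exceed $308^2=94864$. I would carry out the disjointness check first. If two intervals of some type happened to overlap, I would fall back to the slightly stronger fact that the two failing conditions still force two squares in the union of their intervals, which has the same span bound.
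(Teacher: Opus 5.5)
Your proof is correct and follows the paper's argument: contradiction, reduction via Corollaries~\ref{even_saw_0}--\ref{even_saw_8} to two same-parity squares in one of the four hull intervals, and the bound $b^2-a^2\ge 4a+4$ for consecutive same-parity squares. You also make explicit the pigeonhole step and the within-type disjointness of the intervals, which the paper leaves tacit; disjointness does hold for all four types. Two side remarks are slightly off but harmless: $n=47438$ already gives $2n-8>308^2$, so $47440$ is not ``exactly'' that threshold, and your overlap fallback would not work, since a single square could violate two overlapping conditions, but disjointness makes it unnecessary.
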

\begin{proof}
    By way of contradiction, suppose that none of $\mathrm{SW}_{n, 0}$, $\mathrm{SW}_{n, 1}$, $\mathrm{SW}_{n, 3}$, $\mathrm{SW}_{n, 5}$ and $\mathrm{SW}_{n, 8}$ are both TI and ETI. Then, by Corollaries \ref{even_saw_0}--\ref{even_saw_8}, there are at least two even perfect squares in $[2n - 8, 2n + 1220]$, or at least two odd perfect squares in $[4n - 15, 4n + 1217]$, or at least two even perfect squares in $[6n - 20, 6n + 628]$, or at least two odd perfect squares in $[8n - 23, 8n + 625]$. We split the argument into four cases corresponding to the four assertions.
    
    \bigskip\noindent
    \textbf{Case 1}: there are at least two even perfect squares in $[2n - 8, 2n + 1220]$. \\
    In this case, the difference between the two even perfect squares is at most $1228$, hence $2n + 1220 < 310^2$, which gives $n < 47440$, yielding a contradiction.

    \bigskip\noindent
    \textbf{Case 2}: there are at least two odd perfect squares in $[4n - 15, 4n + 1217]$.\\
    In this case, the difference between the two odd perfect squares is at most $1232$, hence $4n + 1217 < 311^2$, which gives $n < 23876$, yielding a contradiction.

    \bigskip\noindent
    \textbf{Case 3}: there are at least two even perfect squares in $[6n - 20, 6n + 628]$.\\
    In this case, the difference between the two even perfect squares is at most $648$, hence $6n + 628 < 164^2$, which gives $n < 4378$, yielding a contradiction.

    \bigskip\noindent
    \textbf{Case 4}: there are at least two odd perfect squares in $[8n - 23, 8n + 625]$.\\
    In this case, the difference between the two odd perfect squares is at most $648$, hence $8n + 625 < 165^2$, which gives $n < 3325$, yielding a contradiction.
\end{proof}

\noindent
We are now in a position to finalize the proof of Theorem \ref{th:main}.

\begin{proof}[Proof of Theorem \ref{th:main}]
    First, assume that $n$ is odd. It is straightforward to verify, either manually or by computer, that the only nontrivial TI tree of odd order at most $7$ is $S(1, 2, 3) \cong \mathrm{SW}_{7, 0}$. Since this tree is not ETI, the nonexistence part of the theorem for odd orders immediately follows. The existence of a subcubic tree of any odd order $n \ge 317$ that is both TI and ETI follows from Lemma \ref{odd_big_lemma}.

    Now, assume that $n \in \{ 9, 11, 13, \ldots, 315 \}$. By executing the \texttt{odd\_resolver.py} \texttt{SageMath} script from~\cite{GitHub}, we conclude that at least one of $\mathrm{SW}_{n, 0}$, $\mathrm{SW}_{n, 1}$ and $\mathrm{SW}_{n, 2}$ is defined and both TI and ETI, provided $n \notin \{ 11, 13, 15, 25, 37, 51, 63 \}$. The script also confirms that the remaining odd orders can be settled using the constructions listed in Table \ref{singular_constructions}.

    We now consider the case where $n$ is even. Using the TI tree generation algorithm from~\cite{StoDam2026}, one can verify that the only TI tree of even order at most $14$ is the tree shown in Fig.~\ref{ti_14_fig}. Although this graph is ETI, it is not subcubic. This completes the nonexistence part of the theorem for even orders. The existence of a subcubic tree of any even order $n \ge 47440$ that is both TI and ETI follows from Lemma~\ref{even_big_lemma}.

    Now, assume that $n \in \{ 16, 18, 20, \ldots, 47438 \}$. Executing the \texttt{even\_resolver.py} \texttt{SageMath} script from \cite{GitHub} shows that at least one of $\mathrm{SW}_{n, p}$ for $p \in \{ 0, 1, 2, \ldots, 8 \}$ is defined and is both TI and ETI, provided
    \begin{align*}
        n \notin \{&16, 18, 20, 22, 24, 26, 30, 32, 34, 36, 38, 44, 48, 50, 54, 56, 58, 60, 68,\\
        &72, 76, 92, 102, 114, 116, 122, 132, 158, 200, 278, 296, 342, 420, 466 \} .
    \end{align*}
    The script also confirms that the remaining even orders can be settled using the constructions listed in Table \ref{singular_constructions}.
\end{proof}

\begin{remark}
    The constructions listed in Table \ref{singular_constructions} were found using a combination of different approaches. Some graphs were identified by filtering out the ETI trees among the TI trees generated by the algorithm from \cite{StoDam2026}, while others were constructed manually. The remaining cases, particularly those involving larger values of $n$, were discovered via a genetic algorithm whose source code is available in \cite{GitHub}.
\end{remark}

\begin{remark}
    There is a unique subcubic tree of order $n$ that is both TI and ETI, for each $n \in \{ 9, 11, 13, 16, 18 \}$.
\end{remark}

\begin{table}[ht]
\centering
\begin{tabular}{r|c||r|c}
$n$ & Construction & $n$ & Construction \\
\hline \hline
$11$ & $C_9(3, 5)$ & $56$ & $C_{51}(5, 1; 24, 1; 26, 2; 30, 1)$ \\
$13$ & $C_9(3, 1; 5, 3)$ & $58$ & $C_{49}(24, 2; 26, 2; 28, 2; 43, 3)$ \\
$15$ & $C_{13}(5, 7)$ & $60$ & $C_{53}(15, 2; 25, 1; 27, 2; 35, 1; 36, 1)$ \\
$16$ & $C_{10}(3, 1; 5, 4; 7, 1)$ & $63$ & $C_{61}(29, 31)$ \\
$18$ & $C_{10}(3, 1; 5, 4; 6, 3)$ & $68$ & $C_{58}(8, 2; 22, 3; 29, 2; 35, 3)$ \\
$20$ & $C_{17}(5, 1; 9, 2)$ & $72$ & $C_{63}(11, 1; 15, 1; 29, 1; 33, 2; 37, 1; 47, 3)$ \\
$22$ & $C_{18}(7, 1; 9, 2; 11, 1)$ & $76$ & $C_{71}(11, 1; 36, 2; 39, 1; 68, 1)$ \\
$24$ & $C_{18}(4, 1; 5, 3; 8, 2)$ & $92$ & $C_{88}(43, 1; 45, 2; 50, 1)$ \\
$25$ & $C_{22}(9, 11, 14)$ & $102$ & $C_{92}(11, 3; 19, 2; 43, 1; 45, 2; 64, 1; 66, 1)$ \\
$26$ & $C_{18}(4, 2; 5, 2; 9, 2; 13, 2)$ & $114$ & $C_{109}(54, 1; 55, 2; 56, 2)$ \\
$30$ & $C_{22}(3, 1; 4, 2; 12, 2; 17, 3)$ & $116$ & $C_{113}(55, 1; 57, 2)$ \\
$32$ & $C_{25}(7, 1; 13, 3; 14, 2; 22, 1)$ & $122$ & $C_{113}(9, 1; 44, 1; 52, 1; 57, 2; 58, 1; 63, 1; 98, 2)$ \\
$34$ & $C_{29}(5, 2; 14, 2; 15, 1)$ & $132$ & $C_{129}(58, 1; 65, 2)$ \\ 
$36$ & $C_{31}(8, 1; 16, 2; 17, 1; 28, 1)$ & $158$ & $C_{152}(7, 1; 75, 1; 76, 2; 77, 2)$ \\
$37$ & $C_{34}(16, 18, 19)$ & $200$ & $C_{194}(92, 2; 98, 2; 106, 1; 157, 1)$ \\
$38$ & $C_{31}(11, 2; 16, 2; 18, 1; 23, 1; 26, 1)$ & $278$ & $C_{272}(14, 1; 134, 1; 136, 2; 138, 2)$ \\
$44$ & $C_{36}(16, 2; 19, 2; 22, 2; 35, 2)$ & $296$ & $C_{289}(138, 1; 140, 2; 145, 2; 149, 2)$ \\
$48$ & $C_{39}(10, 2; 13, 1; 21, 2; 27, 3; 29, 1)$ & $342$ & $C_{338}(168, 1; 170, 2; 286, 1)$ \\
$50$ & $C_{42}(14, 1; 21, 2; 22, 3; 30, 1; 36, 1)$ & $420$ & $C_{417}(195, 1; 209, 2)$ \\
$51$ & $C_{49}(23, 25)$ & $466$ & $C_{457}(35, 2; 58, 2; 68, 2; 226, 2; 239, 1)$ \\
$54$ & $C_{51}(26, 2; 30, 1)$ & & \\
\end{tabular}
\caption{Constructions of subcubic trees of order $n$ that are both TI and ETI.}
\label{singular_constructions}
\end{table}

\begin{figure}[t]
\centering
\begin{tikzpicture}[scale=1.0]
\tikzstyle{vertex}=[draw,circle,minimum size=4pt,inner sep=1pt,fill=black]
\tikzstyle{edge}=[draw,thick]
\tikzstyle{dedge}=[draw,thick,dashed]

\node[vertex] (0) at (0, 0) {};
\node[vertex] (1) at (1, 0) {};
\node[vertex] (2) at (2, 0) {};
\node[vertex] (3) at (3, 0) {};
\node[vertex] (4) at (4, 0) {};
\node[vertex] (5) at (5, 0) {};
\node[vertex] (6) at (6, 0) {};
\node[vertex] (7) at (7, 0) {};
\node[vertex] (8) at (8, 0) {};

\node[vertex] (9) at (4, -1) {};
\node[vertex] (10) at (4, 1) {};
\node[vertex] (11) at (3, 1) {};
\node[vertex] (12) at (2, 1) {};
\node[vertex] (13) at (6, -1) {};

\path[edge] (0) -- (1);
\path[edge] (1) -- (2);
\path[edge] (2) -- (3);
\path[edge] (3) -- (4);
\path[edge] (4) -- (5);
\path[edge] (5) -- (6);
\path[edge] (6) -- (7);
\path[edge] (7) -- (8);

\path[edge] (4) -- (9);
\path[edge] (4) -- (10);
\path[edge] (10) -- (11);
\path[edge] (11) -- (12);
\path[edge] (6) -- (13);
\end{tikzpicture}
\caption{The unique TI tree of even order at most $14$.}
\label{ti_14_fig}
\end{figure}

\section{Conclusion}
\label{sec:conclusion}

With Theorem~\ref{th:main} in hand, a broad range of order realizability problems can be settled with little additional effort. Specifically, one may restrict attention to TI graphs, ETI graphs, or graphs that are simultaneously TI and ETI; impose subcubic, subquartic, or no degree constraints; and consider either arbitrary graphs or trees. The realizable orders for each resulting graph class can then be determined as stated in the following result.

\begin{theorem}\label{brutal_th}
The order realizability for general, subquartic, and subcubic graphs and trees with respect to the TI, ETI, and simultaneous TI and ETI properties is exactly as specified in Table~\ref{existence_problems}.
\end{theorem}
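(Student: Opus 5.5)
The table itself is not visible here. The plan is to prove Theorem~\ref{brutal_th} cell by cell. Each cell of Table~\ref{existence_problems} is obtained by combining three ingredients:
\begin{itemize}
\item \emph{Monotonicity of the classes.} Subcubic trees are contained in subquartic trees, which are contained in all trees. Likewise, subcubic graphs are contained in subquartic graphs, which are contained in all graphs, and each tree class is contained in the corresponding graph class.
\item \emph{Existence from Theorem~\ref{th:main}.} It supplies TI, ETI, and simultaneously TI and ETI subcubic trees for every order $n\in\{9,11,13\}$ and every $n\ge 15$. By monotonicity, these orders are realizable in every cell of the table.
\item \emph{Finite computation.} Only a finite set of small orders remains, namely $n\le 14$ with $n\notin\{9,11,13\}$, plus the trivial orders $1$ and $2$. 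These must be resolved cell by cell.
\end{itemize}

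For the small orders I would use exhaustive search, exactly as in Section~\ref{sec:rare}. Running \texttt{geng} over all connected graphs of order at most $14$ is infeasible at the top end, so I would split the work:
\begin{itemize}
\item For general graphs, use the facts already quoted. The minimum order of a nontrivial TI graph is $7$, and the three TI graphs of order $7$ are not ETI. The smallest nontrivial ETI graphs have order $8$, and exactly one of them is both TI and ETI; it is unicyclic with maximum degree $4$, so it is subquartic.
\item For the orders in between (e.g.\ $10,12,14$ for the simultaneous property in the subcubic graph class), an explicit small example suffices: a unicyclic or tree example found by search, such as the order-$14$ TI tree of Fig.~\ref{ti_14_fig}, which is ETI and has maximum degree $4$. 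Existence cells only need witnesses, found by search or by small modifications of the triangle-based families of Section~\ref{sec:chemical}, e.g.\ Theorem~\ref{even} for even $n\ge10$ ETI chemical graphs.
\item Nonexistence cells for trees are handled by the TI tree generation algorithm of \cite{StoDam2026}, as in the proof of Theorem~\ref{th:main}. It shows that the only TI trees of order at most $8$ are $S(1,2,3)$ and perhaps a few others, and that for even order at most $14$ only the tree of Fig.~\ref{ti_14_fig} is TI. That tree is not subcubic, so even orders at most $14$ are excluded for TI subcubic trees but included at $n=14$ for subquartic trees and for all trees.
\item For ETI trees, which need not be TI, I would enumerate all trees of order at most $14$ directly; this is cheap, since there are a few thousand. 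The enumeration records the degree constraints of each realizing tree.
\end{itemize}

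For the nonexistence cells for general graphs with degree constraints, I would rely on \texttt{geng} with the maximum-degree flag (\texttt{-D3}, \texttt{-D4}) for orders up to $14$ or less. This is feasible because bounded degree drastically reduces the count.

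I expect the main obstacle to be computational: exhausting subquartic or general connected graphs of orders $9$ to $14$ to certify nonexistence. First I would check whether each such cell can be settled by a witness instead, since the intended table likely lists existence for most small orders $\ge 8$ in the graph classes. Nonexistence then only needs to be certified below $8$ or $7$, where full enumeration is trivial. Any remaining gap I would close by pruned \texttt{geng} runs using a necessary condition. For example, graphs of diameter $2$ are never ETI by the proof of Theorem~\ref{rare-eti}, and are never TI since transmissions are determined by degrees; so only graphs of diameter at least $3$ need to be tested.
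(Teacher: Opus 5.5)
Your overall route matches the paper's. Theorem~\ref{th:main} together with monotonicity of the classes fills every cell for $n\in\{9,11,13\}$ and $n\ge 15$. Exhaustive search handles orders at most $7$: no nontrivial TI graph below $7$, no ETI graph of order $7$, and $S(1,2,3)$ as a subcubic TI tree of order $7$. TI-tree enumeration shows there are no TI trees of orders $8,10,12$, and that the only TI tree of order $14$ is the non-subcubic ETI tree of Fig.~\ref{ti_14_fig}. Every other cell at $n\in\{8,10,12,14\}$ is an existence cell. Your guess that the graph rows need no nonexistence certificate above order $7$ is correct, so the pruned \texttt{geng} runs at orders $9$--$14$ are never needed.

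The gap is in the witnesses at $n\in\{8,10,12,14\}$.

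\emph{Order $8$.} You misread Fig.~\ref{fig:smallest_ti_eti}. That graph is a triangle with pendent paths of lengths $2$ and $3$ at two of its vertices, so its maximum degree is $3$, not $4$. Its transmissions $13,14,15,16,18,19,24,25$ are distinct, and so are its edge transmissions $27,28,29,30,32,34,42,44$. Since it is the \emph{unique} TI/ETI graph of order $8$, your reading would force a cross in the subcubic TI/ETI graph row at $n=8$. In fact this single graph is the witness for all nine graph rows at $n=8$.

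\emph{Orders $10,12,14$.} The tree of Fig.~\ref{ti_14_fig} has a vertex of degree $4$, so it cannot witness any subcubic row. There are no TI trees of orders $10$ and $12$, and the only TI tree of order $14$ is not subcubic. Hence the subcubic TI/ETI graph witnesses at these orders must be non-trees, and ``found by search'' does not yet exhibit them.

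The paper closes this with explicit unicyclic subcubic graphs in Figs.~\ref{fig:unicyclic_cool_1}--\ref{fig:unicyclic_cool_3}: a triangle with two pendent paths for $n=10$, and a $7$-cycle with two pendent paths for $n=12,14$. For the ETI tree rows it uses the subcubic ETI trees $S(1,2,4)$, $S(2,3,4)$, $S(1,4,6)$, $S(1,5,7)$ of orders $8,10,12,14$, which your tree enumeration would also find. With these witnesses supplied, your argument coincides with the paper's.
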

\begin{proof}
    Theorem \ref{th:main} implies the existence of a graph of order $n$ in each of the specified classes, for any $n \in \{ 9, 11, 13 \}$ or $n \ge 15$. Also, it is straightforward to verify by computer that there exists no TI or ETI graph of order below $7$, besides the trivial cases, which are excluded.

    Now, assume that $n \in \{ 7, 8, 10, 12, 14 \}$. As established in Section \ref{sec:rare}, there is no ETI graph of order $7$, while $S(1, 2, 3)$ is a subcubic TI tree of order $7$, settling the case $n = 7$. For $n \in \{ 8, 10, 12 \}$, the computational results from \cite{StoDam2026} yield the nonexistence of a TI tree of order $n$. Thus, it suffices to construct a subcubic graph of order $n$ that is both TI and ETI, and a subcubic ETI tree of order $n$. The required subcubic graphs that are simultaneously TI and ETI are given in Figs.~\ref{fig:smallest_ti_eti}, \ref{fig:unicyclic_cool_1} and \ref{fig:unicyclic_cool_2}. Furthermore, it is straightforward to verify that the graphs $S(1, 2, 4)$, $S(2, 3, 4)$ and $S(1, 4, 6)$ are subcubic ETI trees of orders $8$, $10$ and~$12$, respectively, which settles the case $n \in \{ 8, 10, 12 \}$.

    Finally, assume that $n = 14$. As already mentioned, the only TI tree of order $14$ is the tree from Fig.~\ref{ti_14_fig}, and this tree is ETI and subquartic, but not subcubic. To complete the proof, it suffices to observe that the graph from Fig.~\ref{fig:unicyclic_cool_3} is a subcubic graph of order $14$ that is both TI and ETI, while $S(1, 5, 7)$ is a subcubic ETI tree of order $14$.
\end{proof}

\begin{table}[t]
\centering
\begin{tabular}{l||c|c|c|c|c|c|c|c|c|c}
& 1--6 & 7 & 8 & 9 & 10 & 11 & 12 & 13 & 14 & 15-- \\
\hline \hline
TI graph & $\times$ & $\checkmark$ & $\checkmark$ & $\checkmark$ & $\checkmark$ & $\checkmark$ & $\checkmark$ & $\checkmark$ & $\checkmark$ & $\checkmark$\\
\hline
Subquartic TI graph & $\times$ & $\checkmark$ & $\checkmark$ & $\checkmark$ & $\checkmark$ & $\checkmark$ & $\checkmark$ & $\checkmark$ & $\checkmark$ & $\checkmark$\\
\hline
Subcubic TI graph & $\times$ & $\checkmark$ & $\checkmark$ & $\checkmark$ & $\checkmark$ & $\checkmark$ & $\checkmark$ & $\checkmark$ & $\checkmark$ & $\checkmark$\\
\hline
ETI graph & $\times$ & $\times$ & $\checkmark$ & $\checkmark$ & $\checkmark$ & $\checkmark$ & $\checkmark$ & $\checkmark$ & $\checkmark$ & $\checkmark$\\
\hline
Subquartic ETI graph & $\times$ & $\times$ & $\checkmark$ & $\checkmark$ & $\checkmark$ & $\checkmark$ & $\checkmark$ & $\checkmark$ & $\checkmark$ & $\checkmark$\\
\hline
Subcubic ETI graph & $\times$ & $\times$ & $\checkmark$ & $\checkmark$ & $\checkmark$ & $\checkmark$ & $\checkmark$ & $\checkmark$ & $\checkmark$ & $\checkmark$\\
\hline
TI/ETI graph & $\times$ & $\times$ & $\checkmark$ & $\checkmark$ & $\checkmark$ & $\checkmark$ & $\checkmark$ & $\checkmark$ & $\checkmark$ & $\checkmark$\\
\hline
Subquartic TI/ETI graph & $\times$ & $\times$ & $\checkmark$ & $\checkmark$ & $\checkmark$ & $\checkmark$ & $\checkmark$ & $\checkmark$ & $\checkmark$ & $\checkmark$\\
\hline
Subcubic TI/ETI graph & $\times$ & $\times$ & $\checkmark$ & $\checkmark$ & $\checkmark$ & $\checkmark$ & $\checkmark$ & $\checkmark$ & $\checkmark$ & $\checkmark$\\
\hline\hline
TI tree & $\times$ & $\checkmark$ & $\times$ & $\checkmark$ & $\times$ & $\checkmark$ & $\times$ & $\checkmark$ & $\checkmark$ & $\checkmark$\\
\hline
Subquartic TI tree & $\times$ & $\checkmark$ & $\times$ & $\checkmark$ & $\times$ & $\checkmark$ & $\times$ & $\checkmark$ & $\checkmark$ & $\checkmark$\\
\hline
Subcubic TI tree & $\times$ & $\checkmark$ & $\times$ & $\checkmark$ & $\times$ & $\checkmark$ & $\times$ & $\checkmark$ & $\times$ & $\checkmark$\\
\hline
ETI tree & $\times$ & $\times$ & $\checkmark$ & $\checkmark$ & $\checkmark$ & $\checkmark$ & $\checkmark$ & $\checkmark$ & $\checkmark$ & $\checkmark$\\
\hline
Subquartic ETI tree & $\times$ & $\times$ & $\checkmark$ & $\checkmark$ & $\checkmark$ & $\checkmark$ & $\checkmark$ & $\checkmark$ & $\checkmark$ & $\checkmark$\\
\hline
Subcubic ETI tree & $\times$ & $\times$ & $\checkmark$ & $\checkmark$ & $\checkmark$ & $\checkmark$ & $\checkmark$ & $\checkmark$ & $\checkmark$ & $\checkmark$\\
\hline
TI/ETI tree & $\times$ & $\times$ & $\times$ & $\checkmark$ & $\times$ & $\checkmark$ & $\times$ & $\checkmark$ & $\checkmark$ & $\checkmark$\\
\hline
Subquartic TI/ETI tree & $\times$ & $\times$ & $\times$ & $\checkmark$ & $\times$ & $\checkmark$ & $\times$ & $\checkmark$ & $\checkmark$ & $\checkmark$\\
\hline
Subcubic TI/ETI tree & $\times$ & $\times$ & $\times$ & $\checkmark$ & $\times$ & $\checkmark$ & $\times$ & $\checkmark$ & $\times$ & $\checkmark$\\
\end{tabular}
\caption{Order realizability for various graph classes, with TI/ETI referring to graphs that are simultaneously TI and ETI. A check mark indicates that there exists a graph in the specified class with the prescribed order, while a cross mark indicates that it does not exist. The trivial graph $K_1$ is excluded for the TI property, while the trivial graphs $K_1$ and $K_2$ are excluded for the ETI property.}
\label{existence_problems}
\end{table}

\begin{figure}[t]
\centering

\begin{subfigure}[b]{0.95\textwidth}
\centering
\begin{tikzpicture}[scale=1.0]
\tikzstyle{vertex}=[draw,circle,minimum size=4pt,inner sep=1pt,fill=black]
\tikzstyle{edge}=[draw,thick]
\tikzstyle{dedge}=[draw,thick,dashed]

\node[vertex] (0) at (0, 0) {};
\node[vertex] (1) at (1, 0) {};
\node[vertex] (2) at (2, 0) {};
\node[vertex] (3) at (3, 0) {};
\node[vertex] (4) at (4, 0) {};
\node[vertex] (5) at (5, 0) {};
\node[vertex] (6) at (6, 0) {};
\node[vertex] (7) at (7, 0) {};
\node[vertex] (8) at (8, 0) {};
\node[vertex] (9) at (3.5, 0.866) {};

\path[edge] (0) -- (1);
\path[edge] (1) -- (2);
\path[edge] (2) -- (3);
\path[edge] (3) -- (4);
\path[edge] (4) -- (5);
\path[edge] (5) -- (6);
\path[edge] (6) -- (7);
\path[edge] (7) -- (8);
\path[edge] (9) -- (3);
\path[edge] (9) -- (4);
\end{tikzpicture}
\caption{A subcubic graph of order $10$ that is both TI and ETI.}
\label{fig:unicyclic_cool_1}
\end{subfigure}
\\
\begin{subfigure}[b]{0.45\textwidth}
\centering
\begin{tikzpicture}[scale=1.0]
\tikzstyle{vertex}=[draw,circle,minimum size=4pt,inner sep=1pt,fill=black]
\tikzstyle{edge}=[draw,thick]
\tikzstyle{dedge}=[draw,thick,dashed]

\node[vertex] (0) at (0, 0) {};
\node[vertex] (1) at (0, 1.207) {};
\node[vertex] (2) at (-1, 1.207) {};
\node[vertex] (3) at (-1.707, 0.5) {};
\node[vertex] (4) at (0, -1.207) {};
\node[vertex] (5) at (-1, -1.207) {};
\node[vertex] (6) at (-1.707, -0.5) {};

\node[vertex] (7) at (1, 1.207) {};
\node[vertex] (8) at (2, 1.207) {};
\node[vertex] (9) at (3, 1.207) {};
\node[vertex] (10) at (1, -1.207) {};
\node[vertex] (11) at (2, -1.207) {};

\path[edge] (0) -- (1);
\path[edge] (1) -- (2);
\path[edge] (2) -- (3);
\path[edge] (0) -- (4);
\path[edge] (4) -- (5);
\path[edge] (5) -- (6);
\path[edge] (3) -- (6);

\path[edge] (1) -- (7);
\path[edge] (7) -- (8);
\path[edge] (8) -- (9);
\path[edge] (4) -- (10);
\path[edge] (10) -- (11);

\end{tikzpicture}
\caption{A subcubic graph of order $12$ that is both TI and ETI.}
\label{fig:unicyclic_cool_2}
\end{subfigure}
\hfill
\begin{subfigure}[b]{0.45\textwidth}
\centering
\begin{tikzpicture}[scale=1.0]
\tikzstyle{vertex}=[draw,circle,minimum size=4pt,inner sep=1pt,fill=black]
\tikzstyle{edge}=[draw,thick]
\tikzstyle{dedge}=[draw,thick,dashed]

\node[vertex] (0) at (0, 0) {};
\node[vertex] (1) at (0, 1.207) {};
\node[vertex] (2) at (-1, 1.207) {};
\node[vertex] (3) at (-1.707, 0.5) {};
\node[vertex] (4) at (0, -1.207) {};
\node[vertex] (5) at (-1, -1.207) {};
\node[vertex] (6) at (-1.707, -0.5) {};

\node[vertex] (7) at (1, 1.207) {};
\node[vertex] (8) at (2, 1.207) {};
\node[vertex] (9) at (3, 1.207) {};
\node[vertex] (10) at (4, 1.207) {};
\node[vertex] (11) at (1, -1.207) {};
\node[vertex] (12) at (2, -1.207) {};
\node[vertex] (13) at (3, -1.207) {};

\path[edge] (0) -- (1);
\path[edge] (1) -- (2);
\path[edge] (2) -- (3);
\path[edge] (0) -- (4);
\path[edge] (4) -- (5);
\path[edge] (5) -- (6);
\path[edge] (3) -- (6);

\path[edge] (1) -- (7);
\path[edge] (7) -- (8);
\path[edge] (8) -- (9);
\path[edge] (9) -- (10);
\path[edge] (4) -- (11);
\path[edge] (11) -- (12);
\path[edge] (12) -- (13);

\end{tikzpicture}
\caption{A subcubic graph of order $14$ that is both TI and ETI.}
\label{fig:unicyclic_cool_3}
\end{subfigure}

\caption{Subcubic graphs of order $n$ that are both TI and ETI, for $n \in \{ 10, 12, 14 \}$.}
\label{fig:unicyclic_cool}
\end{figure}

In addition, a \texttt{SageMath} script available in~\cite{GitHub} can be used to computationally verify the validity of all the constructions used in the proof of Theorem~\ref{brutal_th}. This result yields various direct corollaries concerning both the TI and ETI properties of graphs and trees, such as the following.

\begin{corollary}
    For any $n \ge 7$, there exists a chemical TI graph of order $n$.
\end{corollary}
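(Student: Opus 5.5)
This corollary follows from Theorem~\ref{brutal_th} by reading off the appropriate row of Table~\ref{existence_problems}. A chemical graph here means a connected graph of maximum degree at most four. It therefore suffices to exhibit, for each $n \ge 7$, a subquartic (or even subcubic) TI graph of order $n$. The row ``Subcubic TI graph'' in Table~\ref{existence_problems} has a check mark for every order from $7$ onward, and every subcubic graph is subquartic, hence chemical.

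For completeness, I would record the witnesses explicitly, split by order.
\begin{itemize}
\item For $n = 7$, take the tree $S(1,2,3)$ from Fig.~\ref{fig:smallest_ti_1}, which is a subcubic TI tree.
\item For $n \in \{8, 10, 12, 14\}$, take the subcubic graphs of Figs.~\ref{fig:smallest_ti_eti}, \ref{fig:unicyclic_cool_1}, \ref{fig:unicyclic_cool_2} and \ref{fig:unicyclic_cool_3}. Each of these is both TI and ETI, hence in particular TI.
\item For $n \in \{9, 11, 13\}$ and all $n \ge 15$, Theorem~\ref{th:main} supplies a subcubic tree of order $n$ that is both TI and ETI, and this tree is the required chemical TI graph.
\end{itemize}
These three cases exhaust all $n \ge 7$.

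There is no real obstacle in this argument. The only care needed is to check that the case split covers every $n \ge 7$, and that the maximum-degree convention adopted in Section~\ref{sec:chemical} (degree at most four) is the one meant by ``chemical''. Since all the witnesses are in fact subcubic, the statement holds under either convention.
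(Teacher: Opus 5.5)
Your proposal is correct and follows the paper's route: the corollary is read off from Theorem~\ref{brutal_th} (the subquartic and subcubic TI graph rows of Table~\ref{existence_problems}). Your explicit witnesses for $n=7$, for $n\in\{8,10,12,14\}$, and for $n\in\{9,11,13\}$ or $n\ge 15$ are exactly the ones used in the proof of that theorem; one trivial slip is that the degree-at-most-four convention for chemical graphs is stated in Section~\ref{sec:prelim}, not Section~\ref{sec:chemical}.
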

\begin{corollary}
    For any $n \in \{ 7, 9, 11 \}$ or $n \ge 13$, there exists a chemical TI tree of order $n$.
\end{corollary}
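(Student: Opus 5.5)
The plan is to read the statement off Theorem~\ref{brutal_th} (Table~\ref{existence_problems}). Here ``chemical'' means maximum degree at most four, so a chemical TI tree is exactly a subquartic TI tree, and we need the \emph{Subquartic TI tree} row of the table. That row has check marks precisely at $n\in\{7,9,11\}$, at $n=13$ and $n=14$, and at every $n\ge 15$. This is the claimed set $\{7,9,11\}\cup\{n:\ n\ge 13\}$, so the corollary follows once each of these orders is justified.

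The justification proceeds in three steps, according to which earlier result supplies the tree.
\begin{enumerate}
\item For $n\in\{9,11,13\}$ and all $n\ge 15$, I invoke Theorem~\ref{th:main}. It gives a subcubic tree of order $n$ that is both TI and ETI, and such a tree is in particular a chemical TI tree.
\item For $n=7$, I take $S(1,2,3)$, which Section~\ref{sec:rare} identifies as the unique smallest TI tree; it is subcubic.
\item For $n=14$, I take the tree of Fig.~\ref{ti_14_fig}. The proof of Theorem~\ref{th:main} records that it is TI, and its maximum degree is $4$, so it is subquartic and hence chemical even though it is not subcubic.
\end{enumerate}

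Nothing in this direction is genuinely hard. The only point needing care is $n=14$, where Theorem~\ref{th:main} gives nothing and we must fall back on the explicit non-subcubic example.

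If the corollary is read as an exact characterization, the excluded orders must also be ruled out.
\begin{itemize}
\item For $n\le 6$, a nontrivial TI graph has order at least $7$, by the \texttt{geng} search reported in Section~\ref{sec:rare}.
\item For $n\in\{8,10,12\}$, the computational results of~\cite{StoDam2026} cited in the proof of Theorem~\ref{brutal_th} show that no TI tree of these orders exists at all.
\end{itemize}
Both nonexistence claims are taken directly from earlier sections.
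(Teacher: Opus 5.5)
Your proposal is correct and follows essentially the paper's route: the corollary is read off the subquartic TI tree row of Table~\ref{existence_problems}, where Theorem~\ref{th:main} covers $n\in\{9,11,13\}$ and $n\ge 15$, $S(1,2,3)$ covers $n=7$, and the non-subcubic TI tree of Fig.~\ref{ti_14_fig} (maximum degree $4$) covers $n=14$. Your nonexistence remarks are harmless but not needed, since the corollary only asserts existence.
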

\begin{corollary}
    For any $n \ge 8$, there exists a chemical ETI tree of order $n$.
\end{corollary}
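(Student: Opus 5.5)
The statement is the last corollary: for any $n \ge 8$ there exists a chemical ETI tree of order $n$. It should follow directly from Theorem~\ref{brutal_th}, reading off the row ``Subcubic ETI tree'' (or ``Subquartic ETI tree'') of Table~\ref{existence_problems}. Since the paper adopts the convention that chemical graphs have maximum degree at most four, any subcubic or subquartic ETI tree qualifies. So the plan is to split the range of $n$ and cite the appropriate construction in each part.

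\begin{itemize}
\item For $n \in \{9, 11, 13\}$ or $n \ge 15$, I will invoke Theorem~\ref{th:main}. It gives a subcubic tree of order $n$ that is both TI and ETI, and in particular a subcubic, hence chemical, ETI tree.
\item For $n \in \{8, 10, 12\}$, I will use the starlike trees $S(1,2,4)$, $S(2,3,4)$ and $S(1,4,6)$, which the proof of Theorem~\ref{brutal_th} records as subcubic ETI trees of these orders.
\item For $n = 14$, I will take either $S(1,5,7)$, a subcubic ETI tree, or the tree of Fig.~\ref{ti_14_fig}. The latter is ETI and has maximum degree four, so it is also chemical under the paper's convention.
\end{itemize}

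The only point needing any care is confirming that these small starlike trees really are ETI. If one wants this by hand rather than by the cited computer check, Lemma~\ref{pend} gives the transmissions along each arm of $S(a,b,c)$ relative to the branching vertex $v$. Each edge transmission is then $2\Tr(v)$ plus an explicit quadratic in the position along the arm, and one compares these finitely many integers directly. Otherwise nothing remains beyond collecting the cases.
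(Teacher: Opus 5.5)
Your proposal is correct and matches the paper's route: the corollary is read off from Theorem~\ref{brutal_th}, which uses Theorem~\ref{th:main} for $n\in\{9,11,13\}$ and $n\ge 15$, the starlike trees $S(1,2,4)$, $S(2,3,4)$, $S(1,4,6)$ for $n\in\{8,10,12\}$, and $S(1,5,7)$ for $n=14$. Your alternative for $n=14$, the subquartic tree of Fig.~\ref{ti_14_fig}, is also valid under the paper's degree-four convention, and your suggested hand check via Lemma~\ref{pend} goes through; for instance, for $S(1,2,4)$ the edge transmissions minus $2\Tr(v)$ are $0,2,4,6,8,14,18$.
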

\begin{corollary}
    For any $n \ge 8$, there exists a chemical graph of order $n$ that is simultaneously TI and ETI.
\end{corollary}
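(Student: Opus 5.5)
The statement should follow directly from Theorem~\ref{brutal_th}, specifically from the row ``Subcubic TI/ETI graph'' of Table~\ref{existence_problems}, which has a check mark for every order $n \ge 8$. Since this paper defines a chemical graph as a connected graph of maximum degree at most four, every subcubic connected graph is chemical. So it suffices to exhibit, for each $n \ge 8$, a subcubic graph of order $n$ that is both TI and ETI. I would split the orders according to how they are covered in the proof of Theorem~\ref{brutal_th}.

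\textbf{Orders $n \in \{9, 11, 13\}$ and $n \ge 15$.} Theorem~\ref{th:main} gives a subcubic tree of order $n$ that is both TI and ETI. Such a tree is connected with maximum degree at most three, so it is chemical.

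\textbf{Order $n = 8$.} I would use the graph of Fig.~\ref{fig:smallest_ti_eti}. It is a path $P_7$ with one extra vertex adjacent to two consecutive interior path vertices. Its maximum degree is $3$, and the paper states it is both TI and ETI.

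\textbf{Orders $n \in \{10, 12, 14\}$.} I would use the unicyclic graphs of Figs.~\ref{fig:unicyclic_cool_1}, \ref{fig:unicyclic_cool_2} and~\ref{fig:unicyclic_cool_3}. Reading off their drawings:
\begin{itemize}
\item in Fig.~\ref{fig:unicyclic_cool_1}, the two vertices of the triangle that lie on the path have degree $3$;
\item in Figs.~\ref{fig:unicyclic_cool_2} and~\ref{fig:unicyclic_cool_3}, the two cycle vertices carrying pendent paths have degree $3$;
\item every other vertex in these three graphs has degree at most $2$.
\end{itemize}
So all three are subcubic. Their TI and ETI properties are asserted in the proof of Theorem~\ref{brutal_th} and confirmed computationally via the cited script.

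There is no real obstacle here; the only step needing care is the degree check above, so that each exhibited graph actually lies within the chemical (indeed subcubic) class. The substantive work is entirely contained in Theorem~\ref{th:main} and the small-order verifications of Theorem~\ref{brutal_th}.
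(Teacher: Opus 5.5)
Your proposal is correct and follows the paper's route. The paper reads the corollary off Theorem~\ref{brutal_th}, namely the check marks for every $n \ge 8$ in the subcubic (hence chemical) TI/ETI graph row of Table~\ref{existence_problems}. Your case split unpacks how that row is established in the proof of Theorem~\ref{brutal_th}: Theorem~\ref{th:main} for $n \in \{9,11,13\}$ and $n \ge 15$, Fig.~\ref{fig:smallest_ti_eti} for $n = 8$, and Fig.~\ref{fig:unicyclic_cool} for $n \in \{10,12,14\}$, together with your degree check.
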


We believe that a natural direction for future research is to compare the edge Wiener dimension with the Wiener dimension, since this would give more insight into the relationship between the TI and ETI properties. With this in mind, we propose the following problem.

\begin{problem}\label{w-ti-eti}
Investigate the possible range of values for the difference between the Wiener dimension 
and the edge Wiener dimension within the class of graphs of a given order $n$, 
and estimate how large this difference can become. What can 
be said if we restrict to trees of order~$n$?
\end{problem}

\noindent
As a special case of Problem \ref{w-ti-eti}, this following problem may also be interesting.

\begin{problem}
Determine the upper bound of the Wiener dimension of all ETI graphs in a fixed class such as trees or other graphs with one or more structural parameters,  and the upper bound of the edge Wiener dimension of all TI graphs in this class.
\end{problem}

\section*{Acknowledgments}

K.~Xu was partially supported by NNSF of China (Grant No.\ 12271251). I.\ Damnjanovi\'{c} was supported by the Ministry of Science, Technological Development and Innovation of the Republic of Serbia, grant number 451-03-34/2026-03/200102, and the Science Fund of the Republic of Serbia, grant \#6767, Lazy walk counts and spectral radius of threshold graphs --- LZWK. S.~Klav\v{z}ar was supported by the Slovenian Research and Innovation Agency (ARIS) under the grants P1-0297, N1-0355, N1-0285, N1-0431, and J1-70045.

\end{document}